\newtheorem{dfn}{Definition}[section]
\newtheorem{thm}[dfn]{Theorem}
\newtheorem{prop}[dfn]{Proposition}
\newtheorem{lem}[dfn]{Lemma}
\newtheorem{cor}[dfn]{Corollary}
\newtheorem{rem}[dfn]{Remark}
\newtheorem{ass}[dfn]{Assumption}
\numberwithin{equation}{section}
\begin{document}

\title{Blow-up behavior for ODEs with normally hyperbolic nature in dynamics at infinity}

\author{Kaname Matsue\thanks{Institute of Mathematics for Industry, Kyushu University, Fukuoka 819-0395, Japan {\tt kmatsue@imi.kyushu-u.ac.jp}} $^{,}$ \footnote{International Institute for Carbon-Neutral Energy Research (WPI-I$^2$CNER), Kyushu University, Fukuoka 819-0395, Japan}
}
\maketitle

\begin{abstract}
We describe blow-up behavior for ODEs by means of dynamics at infinity with complex asymptotic behavior in autonomous systems, as well as in nonautonomous systems.
Based on preceding studies, a variant of closed embeddings of phase spaces and the time-scale transformation determined by the structure of vector fields at infinity reduce our description of blow-ups to unravel the shadowing property of (pre)compact trajectories on the horizon, the geometric object expressing the infinity, with the specific convergence rates.
Geometrically, this description is organized by asymptotic phase of invariant sets on the horizon.
Blow-up solutions in nonautonomous systems can be described in a similar way.
As a corollary, normally, or partially hyperbolic invariant manifolds on the horizon possessing asymptotic phase are shown to induce blow-ups.
\end{abstract}

{\bf Keywords:} blow-up solutions, embeddings, desingularization, normally hyperbolic invariant manifolds, asymptotic phase, nonautonomous differential equations
\par
\bigskip
{\bf AMS subject classifications: } 34A26, 34C08, 34C45, 35B44, 37C60, 37D10, 58K55

\section{Introduction}

Blow-up behavior of solutions in differential equations is widely studied in decades from many aspects such as mathematical, numerical and physical ones.
Fundamental questions in blow-up phenomena are {\em whether or not a solution blows up at a finite time} and, if it does, {\em when, where} and {\em how} it blows up.
While numerous studies involving blow-ups and related {\em finite-time singularities} such as finite-time extinction, collapse and  quenching are provided from various viewpoints, the author and his collaborators have provided characterizations of finite-time singularities involving blow-ups by means of {\em dynamics at infinity} based on compactifications of phase spaces and appropriate time-scale desingularizations \cite{Mat2018, Mat2019, asym1, asym2}.
These machineries reduce the problem for finding and characterizing blow-up solutions to that for characterizing (center-)stable manifolds of equilibria/periodic orbits on {\em the horizon}, the geometric object expressing infinity as the boundaries of compactified manifolds or their upper-tangent spaces, for the transformed vector fields called {\em desingularized vector fields} so that \lq\lq dynamics at infinity" makes sense.
In particular, asymptotic theory in dynamical systems provides various characterizations of blow-up solutions under appropriate setting and transform in vector fields.
Note that this approach is applied to studying features of solutions of PDEs with blow-up and/or quenching behavior (e.g., \cite{IMS2020, IS2020, IS2021, IS2022}), while the approach itself is also used to study bounded objects in dynamical systems (e.g., \cite{DH1999, DLA2006, GKO2022, I2023_2, KR2004}).
Moreover, the proposed machinery also provides various achievements in {\em computer-assisted proofs} of blow-up solutions with well-established concepts in dynamical systems \cite{LMT2023, MT2020_1, MT2020_2, TMSTMO2017}.
\par
In the previous studies, blow-up solutions are considered only for {\em autonomous} vector fields ${\bf y}' = f({\bf y})$ admitting, as already mentioned, {\em equilibria} and/or {\em periodic orbits} at infinity.
While various generalization will be proposed, we shall pay our attention to the following generalizations here:
\begin{itemize}
\item blow-up behavior with more complex nature at infinity, and
\item characterization of blow-ups in nonautonomous systems. 
\end{itemize}
One of appropriate candidates in the first direction would be blow-ups shadowing {\em normally hyperbolic invariant manifolds} ({\em NHIMs} for short) at infinity (cf. \cite{K2014} for characterizing unbounded geometry by means of NHIMs){\color{black}, where several fundamental properties characterizing blow-ups associated with hyperbolic equilibria and periodic orbits are admitted}.
In the case of nonautonomous systems
\begin{equation}
\label{ODE-original}
{\bf y}' = f(t, {\bf y})
\end{equation}
under a suitable setting to $f$, an appropriate treatment of time variable $t$ is required. 
One of typical treatments is to regard the time variable $t$ as an additional phase variable and the original system as the extended autonomous system
\begin{equation}
\label{nonaut-extend}
\frac{d{\bf y}}{d\eta} = f(t,{\bf y}),\quad \frac{dt}{d\eta} = 1,
\end{equation}
in which case treatments of invariant sets becomes different from general autonomous systems\footnote{
For example, {\em equilibria} do not make sense in (\ref{nonaut-extend}).
}.
While useful machineries such as {\em compactifications in the time variable} (e.g., \cite{WXJ2021}) are established for studying global-in-time solutions in nonautonomous systems, we will see that the above extended autonomous system (\ref{nonaut-extend}) with an appropriate rule of {\em scaling} in $t$ is suitable for characterizing blow-ups in nonautonomous systems in the sense that machineries developed for autonomous systems can be applied.
\par
Our main aims here are to see that the following stuff essentially characterize blow-ups both in autonomous and nonautonomous systems:
\begin{itemize}
\item shadowing property of trajectories at infinity;
\item \lq\lq specific (and sufficiently fast) convergence rate" to them.
\end{itemize}
Geometrically, the first requirement is described by means of {\em asymptotic phase} of invariant sets (e.g., \cite{LP2021} and references therein), which is manifested in invariant manifolds with {\em partial / normal hyperbolicity}.
An example is invariant foliations of stable manifolds of NHIMs.
The second requirement is {\color{black}a little} stronger than usual characterizations of hyperbolicity.
In a special case, we refer to the theory in {\em linear differential systems}, e.g., \cite{SS1974, SS1978, SS1980} (see also \cite{KR2011, S1971}), where {\em dichotomy spectrum} characterize stability or asymptotic behavior of solutions.
We shall see that a special structure of the spectrum provides our requirement of \lq\lq convergence rate", which includes the case of the presence of hyperbolic equilibria/periodic orbits at infinity (\cite{Mat2018}).
\par
\bigskip
The rest of the present paper is organized as follows.
In Section \ref{section-preliminaries-NH}, we review a fundamental concept of NHIMs and put a remark on asymptotic phase.
In Section \ref{section-preliminaries}, we summarize fundamental settings of vector fields we shall treat throughout the present paper and machineries to characterize dynamics at infinity including blow-up solutions; embeddings of phase spaces and time-scale desingularizations.
In Section \ref{section-blowup-NHIM}, we pay our attention to autonomous systems, and provide a {\color{black}description} of blow-up solutions {\em shadowing trajectories at infinity}.
This characterization generalizes preceding works where blow-up solutions shadowing hyperbolic equilibria and periodic orbits at infinity are characterized (e.g., \cite{Mat2018, Mat2019}).
We further describe blow-up solutions with asymptotic rates automatically determined by the quasi-homogeneity of vector fields, which are often referred to as {\em type-I} blow-ups, under the specific \lq\lq convergence rate" to shadowed trajectories.
In Section \ref{section-blowup-nonaut}, we move to nonautonomous systems and derive a criterion of the existence of blow-ups with their asymptotic behavior.
In addition to a fundamental description, a geometric treatment of NHIMs with boundaries towards our aims is also presented to geometrically describe the blow-up criterion.
In Section \ref{section-blowup-ex}, we show several examples of blow-up solutions in nonautonomous systems.
Our examples shown there revisit to preceding works in various backgrounds to characterize finite-time singularities, and will be shown that blow-ups are described {\color{black}{\em in a unified way} through the present methodology}.
\par
Several technical details are collected in Appendix.
In Appendix \ref{sec:dir}, another embedding frequently applied in practical problems is reviewed. 
In Appendix \ref{section-SSspec}, a sufficient condition to satisfy the rate requirement discussed in Section \ref{section-blowup-NHIM} is provided by means of {\em spectral theory in linear differential systems}.
In Appendix \ref{section-technical_nonaut}, a precise description of blow-up rates of blow-up solutions in nonautonomous systems via a geometric technique is presented.

\section{Preliminaries 1: normally hyperbolic invariant manifolds}
\label{section-preliminaries-NH}

Here we briefly summarize a fundamental concept in dynamical systems we shall use later, {\em normally hyperbolic invariant manifolds} (NHIMs) followed by \cite{EKR2018, HPS1977}, as well as a remark on a property which NHIMs admit, {\em asymptotic phase}.
\begin{dfn}[Vector bundles. e.g., \cite{H1994, MS1974, SS1974}]\rm
An $n$-dimensional {\em vector bundle} $\xi = (\pi, E, X)$ (over $X$) is defined as the triple of the following objects: a topological space $X$ called the {\em base space}, the {\em total space} $E = \bigcup_{{\bf x}\in X}F_{\bf x}$ with the {\em fiber} $F = F_{\bf x} \cong \mathbb{R}^n$, and the {\em continuous projection} $\pi: E\to X$.
For any subset $M\subset X$ of the base space, let $\xi|_M = (\pi, E(M), M)$ with $E(M) = \bigcup_{{\bf x}\in M}F_{\bf x}$ be the restriction of $\xi$ over $M$.
\end{dfn}

Let $f: \mathbb{R}^n\to \mathbb{R}^n$ be a $C^r$-vector field with $r \geq 1$ and $\varphi = \varphi_f$ be the generated flow.
For any $t\in \mathbb{R}$, the associated time-$t$ map is denoted by $\varphi^t = \varphi_f^t$. 
For any subset $M\subset \mathbb{R}^n$, let $T_M\mathbb{R}^n := \pi^{-1}(M)$ associated with the vector bundle $(\pi, T\mathbb{R}^n, \mathbb{R}^n)$.

\begin{rem}
Arguments in this subsection are valid for $C^r$-vector fields $f: Q\to TQ$, where $Q$ is a $C^r$ Riemannian manifold.
Nevertheless we pay our attention to the case $Q=\mathbb{R}^n$ for simplicity.
Indeed, our interests here are submanifolds of $\mathbb{R}^n$ for some $n$ and associated vector fields.
\end{rem}

\begin{dfn}[Normal hyperbolicity, cf. \cite{HPS1977, EKR2018}]\rm
\label{dfn-NH}
Let $M\subset \mathbb{R}^n$ be a compact, connected manifold with $\partial M = \emptyset$ which is invariant for $\varphi$.
We say that $M$ is {\em $r$-normally hyperbolic}\footnote{
In \cite{HPS1977}, the above definition is referred to as {\em $r$-eventually relatively normal hyperbolicity}.
This property is sufficient in our purpose.
} if there is a continuous splitting 
\begin{equation}
\label{cont-splitting}
T_M \mathbb{R}^n = TM\oplus {\color{black}E^s \oplus E^u}
\end{equation}
of $T_M\mathbb{R}^n$ such that, for any $t$, all subbundles $TM$, $E^u$ and $E^s$ are $D\varphi^t$-invariant and that there are constants $C>0$ and $\lambda_s < 0 < \lambda_u$ satisfying
\begin{equation}
\label{exp-NH}
\|D\varphi^t|_{E^s_p}\| \leq Ce^{\lambda_s t} m\left( D\varphi^t|_{T_pM} \right)^i, \quad m\left( D\varphi^t|_{E^u_p} \right) \geq \frac{e^{\lambda_u t}}{C} \|D\varphi^t|_{T_pM}\|^i
\end{equation}
for all $p\in M$, $t\geq 0$, and $0\leq i \leq r$.
We shall call $M$ an ($r$-){\em normally hyperbolic invariant manifold}, or ($r$-){\em NHIM} for short.
Now $\oplus$ in (\ref{cont-splitting}) denotes the Whitney sum of vector bundles and 
\begin{equation*}
m(A) := \inf \left\{\|Av\| \mid \|v\| = 1 \right\}
\end{equation*}
is the minimum norm\footnote{
If $A$ is invertible, then $m(A) = \|A^{-1}\|^{-1}$.
} of a matrix $A$.
We particularly say a NHIM $M$ being {\em normally attracting} ($M$ being a {\em NAIM} for short) if $E^u$ is the zero bundle; $E^u = \{0\}$.
\end{dfn}

Denote $n_T, {\color{black}n_s, n_u}$ with $n = n_T + n_s+ n_u$ by the rank ($=$ dimension of fibers) of $TM$, {\color{black}$E^s$ and $E^u$}, respectively.
In particular, $TM$, {\color{black}$E^s$ and $E^u$} are vector bundles with the base space $M$ and $n_T$-, {\color{black}$n_s$- and $n_u$-}dimensional fibers.
We shall call {\color{black}$E^s$ and $E^u$ {\em the stable bundle} and {\em the unstable bundle} of $M$}, respectively, while $TM$ is the tangent bundle of $M$.
When $M$ is a NAIM, then $n_u = 0$.
The vector bundle structure of $E^s$ is written by $\xi^s = (\pi_s, E^s, F^s)$
with 
\begin{equation*}
E^s = \bigcup_{p\in M} F^s_p,\quad F^s \cong F^s_p\cong \mathbb{R}^{n_s}
\end{equation*}
for all $p\in M$. 
A similar description holds for $E^u$.
Several properties which NHIMs possess are summarized below.
\begin{prop}[e.g., \cite{HPS1977, PT1977, PS1970, W2013}]
\label{prop-summary-NHIM}
Let $M\subset \mathbb{R}^n$ be a boundaryless, compact NHIM for $\varphi_f$.
\begin{enumerate}
\item (Invariant foliation). There is a {\em local stable manifold} $W^s_{\rm loc}(M)$ tangent to the Whitney sum $TM\oplus E^s$ at $M$, which is a $C^r$-embedded submanifold of $\mathbb{R}^n$ and invariantly fibered by embedded ($n_s$-dimensional) disks $D_p^s$, denoted by $W^s_{\rm loc}(p)$ for each $p\in M$, constructing fibers of the {\em local stable invariant foliation}
\begin{equation*}
W^s_{\rm loc}(M) (=  W^s_{\rm loc}(M; \varphi)) = \bigcup_{p\in M}W^s_{\rm loc}(p).
\end{equation*}
In particular, $W^s_{\rm loc}(p)$ is an invariant manifold satisfying $T_pW^s_{\rm loc}(p) = E_p^s$ for each $p\in M$.
\item (Exponential decay along fibers). There exist $C_s > 0$ and $\lambda_s < 0$ such that, for each $p\in M$, the following holds for all $t\geq 0$ whenever $q\in W^s_{\rm loc}(p)$:
\begin{equation}
\label{estimate-NHIM-exp}
\|\varphi_f(t, q) - \varphi_f(t, p)\| < C_s e^{\lambda_s t}.
\end{equation}
\end{enumerate}
\end{prop}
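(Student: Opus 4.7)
The plan is to prove both statements via the classical graph transform (Perron--Hadamard) method applied in a tubular neighborhood of $M$. First, by compactness of $M$ and boundarylessness, the tubular neighborhood theorem identifies a neighborhood $U$ of $M$ in $\mathbb{R}^n$ with a neighborhood of the zero section of the normal bundle $E^s\oplus E^u\to M$; in these coordinates a point near $M$ is written as a triple $(p,s,u)$ with $p\in M$, $s\in E^s_p$, $u\in E^u_p$ of small norm. Pulling back $f$ and passing to the time-$1$ map $\Phi:=\varphi_f^1$, the normal hyperbolicity inequalities (\ref{exp-NH}) yield that $\Phi$ is, up to terms that are higher order in $(s,u)$, fiberwise linear with contraction rate $e^{\lambda_s}$ on $E^s$, expansion rate $e^{\lambda_u}$ on $E^u$, and rates on $TM$ that are dominated by these normal rates (this is where the \emph{eventual} relative normal hyperbolicity plus a possible passage to $\varphi_f^N$ for large $N$ is used).

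For the local stable manifold in part (1), I would consider the complete metric space of Lipschitz sections $h:E^s\to E^u$ over the identity on $M$, restricted to a small disk bundle and with Lipschitz constant bounded by a small $\kappa>0$ (a cone condition). The graph transform $\mathcal{G}$ sends the graph of $h$ to the preimage under $\Phi$ of the graph, followed by reparametrizing over $E^s$; the bounds just described show $\mathcal{G}$ is well defined on this space and contracts in the $C^0$ norm with factor $\sim e^{\lambda_s-\lambda_u}<1$. The unique fixed point $h_*$ has graph equal to the local stable manifold $W^s_{\mathrm{loc}}(M)$, which by construction is $\Phi$-invariant, tangent to $TM\oplus E^s$ along $M$, and (by upgrading the contraction to Banach spaces of $C^k$ sections with appropriately adapted norms, using $r$-normal hyperbolicity) is of class $C^r$.

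The foliation and the exponential decay in part (2) come from a second graph transform, this time fiberwise over base points $p\in M$. Concretely, one looks for a family $\{D^s_p\}_{p\in M}$ of $C^r$ embedded disks in $W^s_{\mathrm{loc}}(M)$ with $\Phi(D^s_p)\subset D^s_{\Phi(p)}$, parametrized as graphs $\sigma_p:E^s_p\to TM_p$ in a moving frame. Invariance of the family translates into a fixed-point equation for the section $p\mapsto\sigma_p$ in a fiber-product space of sections, and the same rate gap $\lambda_s<0<\lambda_u$, combined with the dominance of $\|D\varphi^t|_{E^s}\|$ over any subunit multiple of the rates on $TM$ (the quantity $m(D\varphi^t|_{TM})$ in (\ref{exp-NH})), yields contraction. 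The exponential estimate (\ref{estimate-NHIM-exp}) then follows by writing, for $q\in W^s_{\mathrm{loc}}(p)$, $\|\varphi_f^t(q)-\varphi_f^t(p)\|$ in the tubular coordinates and applying a Gronwall argument that absorbs the higher-order nonlinearity into an arbitrarily small constant, leaving the linear rate $e^{\lambda_s t}$ with a possibly enlarged $C_s$.

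The main obstacle is not the existence of the invariant set but establishing the full $C^r$ regularity and, above all, the \emph{continuous/smooth dependence of the fibers on the base point $p$}. The graph transform naturally produces Lipschitz objects; upgrading each successive derivative requires a separate fixed-point argument in a finer function space, and the limitation in derivative order is dictated exactly by the $r$ appearing in (\ref{exp-NH}) through the spectral gap condition. For the present proposition I would cite \cite{HPS1977, PS1970, PT1977} for the technical execution of these iterated graph transforms and only isolate the exponential estimate (\ref{estimate-NHIM-exp}), which is needed explicitly in later sections, as a self-contained corollary of the construction.
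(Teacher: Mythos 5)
Your proposal is correct: the paper offers no proof of this proposition at all, stating it as a classical result with citations to \cite{HPS1977, PT1977, PS1970, W2013}, and your graph-transform (Hadamard--Perron) sketch is precisely the standard argument carried out in those references, to which you also defer for the technical execution of the regularity and fiber-dependence steps. So you take essentially the same route as the paper, just with a more explicit outline of the underlying machinery.
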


\par
\bigskip
Another classical property; {\em local conjugacy of dynamics to linearizations}, is left to Appendix \ref{section-conjugacy}.
The theory of NHIMs is also established for manifolds {\em with nonempty boundary}.
We collect basic notions of normal hyperbolicity in such a case.
\begin{dfn}[Inflowing/overflowing invariant manifolds, e.g., \cite{W2013}]\rm
\label{dfn_in_overflow}
Suppose that $M$ is a compact manifold in an ambient manifold $Q$ possibly with nonempty boundary: $\partial M \not = \emptyset$.
We say that $M$ is {\em inflowing invariant} for the vector field $f$ if $\varphi_f^t(M)\subset M$ for all $t\geq 0$ and if $f$ points strictly inward at $\partial M$.
Similarly, we say that $M$ is {\em overflowing invariant} for the vector field $f$ if $\varphi_f^t(M)\subset M$ for all $t\leq 0$ and if $f$ points strictly outward at $\partial M$. 
If $M$ is inflowing/overflowing invariant with the structures (\ref{cont-splitting}) and (\ref{exp-NH}), we call $M$ {\em inflowing/overflowing NHIM}, respectively\footnote{
If $\partial M = \emptyset$ and $M$ is invariant, then $M$ is both overflowing and inflowing invariant.
But if $\partial M \not = \emptyset$, $M$ being invariant does {\em not} imply that $M$ is both overflowing and inflowing invariant.
}.
\end{dfn}
It is proved that{\color{black}, under suitable contracting/expanding properties like normal hyperbolicity}, {\em inflowing} invariant manifolds admit invariant foliations of local {\em stable} manifolds, while {\em overflowing} invariant manifolds admit invariant foliations of local {\em unstable} manifolds (e.g., \cite{EKR2018, W2013}).

Finally we shall make a remark on invariant foliations; shadowing property for NHIMs.
Even if an invariant manifold is not normally hyperbolic, there are several cases which it admits the similar property.
\begin{rem}[Asymptotic phase, cf. \cite{LP2018, LP2021}]
\label{rem-asym-phase}
Let $X = (X,d)$ be a metric space and $\varphi$ be a flow on $X$.
Suppose that there is an attracting invariant set $A$ with a basin $B$: 
\begin{equation*}
\lim_{t\to \infty} d(\varphi^t({\bf x}), A) = 0,\quad \forall {\bf x}\in B.
\end{equation*}
It is said that a mapping $t\mapsto \varphi^t({\bf x})$, ${\bf x}\in B$, has an {\em asymptotic phase} if there is ${\bf x}_\ast \in A$ such that
\begin{equation*}
\lim_{t\to \infty} d(\varphi^t({\bf x}), \varphi^t({\bf x}_\ast)) = 0.
\end{equation*}
Stable invariant foliation for a NHIM $M$ is a special case of asymptotic phase: $A=M$ and $B=W^s_{\rm loc}(M)$.
Asymptotic phase beyond normal hyperbolicity is constructed in several studies.
See e.g., \cite{LP2018, LP2021} for a brief summary of historical backgrounds in asymptotic phase and its existence in a generalized setting.
\end{rem}

\section{Preliminaries 2: dynamics at infinity}
\label{section-preliminaries}
In this section we provide tools and methodologies to consider divergent solutions in terms of global-in-time trajectories converging to invariant sets for appropriately transformed vector fields as general as possible.

\subsection{Asymptotically quasi-homogeneous systems}
\label{section-AQH}
First of all, we collect our present settings precisely, which are based on \cite{Mat2018}.

\begin{dfn}[Homogeneity index and admissible domain]\rm
\label{dfn-index}
Let $\alpha = (\alpha_1,\cdots, \alpha_n)$ be a set of nonnegative integers.
We say the index set $I_\alpha=\{i\in \{1,\cdots, n\}\mid \alpha_i > 0\}$ the set of {\em homogeneity indices associated with $\alpha = (\alpha_1,\cdots, \alpha_n)$}.
Let $U\subset \mathbb{R}^n$.
We say the domain $U\subset \mathbb{R}^n$ {\em admissible with respect to the sequence $\alpha$}
if
\begin{equation*}
U = \left\{{\bf x}=(x_1,\cdots, x_n)\in \mathbb{R}^n \mid x_i\in \mathbb{R}\,\text{ if }\, i\in I_\alpha,\ (x_{j_1},\cdots, x_{j_{n-l}}) \in \tilde U\right\},
\end{equation*} 
where $\{j_1, \cdots, j_{n-l}\} = \{1,\cdots, n\}\setminus I_\alpha$ and $\tilde U$ is an open set in $\mathbb{R}^{n-l}$ spanning $(x_{j_1},\cdots, x_{j_{n-l}})$ with $\{j_1, \cdots, j_{n-l}\} = \{1,\cdots, n\}\setminus I_\alpha$.
\end{dfn}


\begin{dfn}[Asymptotically quasi-homogeneous vector fields, cf. \cite{D1993, Mat2018}]\rm
\label{dfn-AQH}
Let $U$ be an admissible set $U\subset \mathbb{R}^n$ with respect to $\alpha$. 
Also, let $f_0:U \to \mathbb{R}$ be a function.
Let $\alpha_1,\ldots, \alpha_n$ be nonnegative integers with $(\alpha_1,\ldots, \alpha_n) \not = (0,\ldots, 0)$ and $k > 0$.
We say that $f_0$ is a {\em quasi-homogeneous function\footnote{
In preceding studies, all $\alpha_i$'s and $k$ are typically assumed to be natural numbers.
In the present study, on the other hand, the above generalization is valid.
} of type $\alpha = (\alpha_1,\ldots, \alpha_n)$ and order $k$} if
\begin{equation*}
f_0( s^{\Lambda_\alpha}{\bf x} ) = s^k f_0( {\bf x} )\quad \text{ for all } {\bf x} = (x_1,\ldots, x_n)^T \in U \text{ and } s>0,
\end{equation*}
where\footnote{
Throughout the rest of this paper, the power of real positive numbers or functions to matrices is described in the similar manner.
}
\begin{equation*}
\Lambda_\alpha =  {\rm diag}\left(\alpha_1,\ldots, \alpha_n\right),\quad s^{\Lambda_\alpha}{\bf x} = (s^{\alpha_1}x_1,\ldots, s^{\alpha_n}x_n)^T.
\end{equation*}
Next, let $X = \sum_{i=1}^n f_i({\bf x})\frac{\partial }{\partial x_i}$ be a continuous vector field defined on $U$.
We say that $X$, or simply $f = (f_1,\ldots, f_n)^T$ is a {\em quasi-homogeneous vector field of type $\alpha = (\alpha_1,\ldots, \alpha_n)$ and order $k+1$} if each component $f_i$ is a quasi-homogeneous function of type $\alpha$ and order $k + \alpha_i$.
\par
Finally, we say that $X = \sum_{i=1}^n f_i({\bf x})\frac{\partial }{\partial x_i}$, or simply $f: U\to \mathbb{R}^n$ is an {\em asymptotically quasi-homogeneous vector field of type $\alpha = (\alpha_1,\ldots, \alpha_n)$ and order $k+1$ (at infinity)} if there is a quasi-homogeneous vector field  $f_{\alpha,k} = (f_{i; \alpha,k})_{i=1}^n$ of type $\alpha$ and order $k+1$ such that
\begin{equation}
\label{residual}
f_i( s^{\Lambda_\alpha}{\bf x} ) - s^{k+\alpha_i} f_{i;\alpha,k}( {\bf x} ) = o(s^{k+\alpha_i}),\quad i\in \{1,\ldots, n\},
 \end{equation}
as $s\to +\infty$ uniformly on $\left\{{\bf x}\in U \mid \sum_{i\in I_\alpha} x_i^2 = 1, (x_{j_1},\cdots, x_{j_{n-l}}) \in \tilde K\right\}$ for any compact subset $K\subset \tilde U$.
\end{dfn}

\begin{rem}
In the above definition, non-polynomial-like functions such as $\sin x$ are not included to characterize quasi-homogeneity.
Indeed, such functions are allowed to exist only in the residual terms characterized by the asymptotic quasi-homogeneity (\ref{residual}). 
On the other hand, (\ref{residual}) is required for all $i\in \{1,\ldots, n\}$.
\end{rem}

A fundamental property of quasi-homogeneous functions and vector fields is reviewed in e.g. \cite{asym1}.
%
%
%
%
%
Throughout the rest of this section, consider an (autonomous) $C^r$ vector field (\ref{ODE-original}) with $r\geq 1$, where $f: U \to \mathbb{R}^n$ is asymptotically quasi-homogeneous of type $\alpha = (\alpha_1,\ldots, \alpha_n)$ and order $k+1$ at infinity defined on an admissible set $U\subset \mathbb{R}^n$ with respect to $\alpha$.

%
%
%
%

\subsection{Embedding of phase spaces and dynamics at infinity}

Our process to describe blow-up solutions from the viewpoint of dynamical systems is summarized below.
\begin{enumerate}
\item For given (asymptotically quasi-homogeneous) vector field $f$, determine its type $\alpha$ and order $k+1$.
See Section \ref{section-blowup-nonaut} for the treatment of $t$ in nonautonomous systems.
\item Apply an {\em embedding of phase spaces} with the type $\alpha$, precisely defined below, and transform $f$ through this embedding.
\item Introduce the time-scale transformation determined by the order $k+1$ of $f$ to desingularize the transformed vector field at infinity.
\end{enumerate}
The above process is widely used in {\em autonomous} systems (cf. \cite{I2023_1, I2023_2, IMS2020, IS2020, IS2021, IS2022, LMT2023, Mat2018, Mat2019, MT2020_1, TMSTMO2017}) and we follow this strategy in the present argument as well as nonautonomous systems later.

\begin{rem}
In preceding studies, embeddings we would review below were referred to as {\em compactifications}.
On the other hand, because the embedded manifolds involving our interests here are typically non-compact, including the case of nonautonomous systems, we shall use the terminology \lq\lq embeddings" instead of \lq\lq compactifications".
\end{rem}

In the following arguments in this section, we fix a type $\alpha = (\alpha_1,\ldots, \alpha_n)\in \mathbb{Z}_{\geq 0}^n \setminus \{(0,\ldots, 0)\}$, and let $I_\alpha$ be the associated set of homogeneity indices.
Also, let $U\subset \mathbb{R}^n$ be an admissible set with respect to $\alpha$.
A \lq\lq global-type" embedding and the associated vector field suitable for describing dynamics at infinity are introduced here.
Another example of embeddings is shown in Appendix \ref{sec:dir}.

\begin{dfn}[Quasi-parabolic embedding, cf. \cite{MT2020_1}]\rm
\label{dfn-quasi-para}
Let $\{\beta_i\}_{i\in I_\alpha}$ be the collection of natural numbers so that 
\begin{equation}
\label{LCM}
\alpha_i \beta_i \equiv c \in \mathbb{N},\quad i\in I_\alpha
\end{equation}
is the least common multiplier.
In particular, $\{\beta_i\}_{i\in I_\alpha}$ is chosen to be the smallest among possible collections.
Let $p({\bf y})$ be a functional given by
\begin{equation}
\label{func-p}
p({\bf y}) \equiv \left( \sum_{i \in I_\alpha} y_i^{2\beta_i} \right)^{1/2c}.
\end{equation}
Define the mapping $T_{{\rm para};\alpha}: U \to \mathbb{R}^n$ as the inverse of
\begin{equation}
\label{parabolic-cpt}
S_{{\rm para};\alpha}({\bf x}) = {\bf y},\quad y_j = \kappa^{\alpha_j} x_j,\quad j=1,\ldots, n,
\end{equation}
where 
\begin{equation*}
\kappa = \kappa({\bf x}) = (1- p({\bf x})^{2c})^{-1} \equiv \left( 1 - \sum_{j\in I_\alpha} x_j^{2\beta_j}\right)^{-1}.
\end{equation*}
We say the mapping $T_{{\rm para};\alpha}$ the {\em quasi-parabolic embedding (with type $\alpha$)}.
\end{dfn}

\begin{rem}
\label{rem-kappa}
The functional $\kappa = \tilde \kappa({\bf y})$ as a functional determined by ${\bf y}$ is implicitly determined by $p({\bf y})$.
Details of such a characterization of $\kappa$ in terms of ${\bf y}$, and the bijectivity and smoothness of $T_{{\rm para};\alpha}$ are shown in \cite{MT2020_1} with a general class of embeddings including quasi-parabolic compactifications (embeddings).
\par
The above definition is originally introduced in the case $I_\alpha = \{1,\ldots, n\}$.
It is nevertheless valid for the general case, $I_\alpha \not = \{1,\ldots, n\}$, to characterize infinity suitable for the behavior of $f$.
It should be however noted that $T_{{\rm para};\alpha}$ and its inverse $S_{{\rm para};\alpha}$ are defined for {\em all} $j \in \{1,\ldots, n\}$, not only $j\in I_\alpha$.
\end{rem}

As proved in \cite{MT2020_1}, $T_{{\rm para};\alpha}$ maps $U$ one-to-one onto the set
$\mathcal{D} \equiv \{{\bf x}\in U \mid p({\bf x}) < 1\}$.
Infinity in the original coordinate then corresponds to a point on the level set of $p$:
\begin{equation*}
\mathcal{E} = \{{\bf x} \in U \mid p({\bf x}) = 1\}.
\end{equation*}
\begin{dfn}\rm
We call the set $\mathcal{E}$ the {\em horizon}.
\end{dfn}

Once we fix an embedding associated with the type $\alpha = (\alpha_1, \ldots, \alpha_n)$ of the vector field $f$ with order $k+1$, we can derive the vector field which {\color{black}is defined} including the horizon.
Then the {\em dynamics at infinity} makes sense through the appropriately transformed vector field called the {\em desingularized vector field}, denoted by $g$.
The common approach is twofold.
Firstly, we rewrite the vector field (\ref{ODE-original}) with respect to the new variable defined by embeddings.
Secondly, we introduce the time-scale transformation of the form $d\tau = q({\bf x})\kappa({\bf x}(t))^k dt$ for some function $q({\bf x})$ which is bounded including the horizon. 
We then obtain the vector field with respect to the new time variable $\tau$, which is continuous, including the horizon.

\begin{rem}
\label{rem-choice-cpt}
Continuity of the desingularized vector field $g$ including the horizon is guaranteed by the smoothness of $f$ and asymptotic quasi-homogeneity (\cite{Mat2018}).
In the case of parabolic-type embeddings introduced here, $g$ inherits the smoothness of $f$ including the horizon, which is not always the case of other embeddings in general. 
Details are discussed in \cite{Mat2018}.
\end{rem}

\begin{dfn}[Time-scale desingularization]\rm 
Define the new time variable $\tau$ by
\begin{equation}
\label{time-desing-para}
d\tau = q({\bf x})^{-1}(1-p({\bf x})^{2c})^{-k} dt,
\quad q({\bf x}) = 1-\frac{2c-1}{2c}(1-p({\bf x})^{2c}),
\end{equation}
equivalently
\begin{equation*}
t - t_0 = \int_{{\color{black}0}}^\tau q({\bf x}(\tau)) (1-p({\bf x}(\tau))^{2c})^k d\tau,
\end{equation*}
where {\color{black}$t_0$ denotes the initial time in the $t$-timescale}, ${\bf x}(\tau) = T_{{\rm para};\alpha}({\bf y}(\tau))$ and ${\bf y}(\tau)$ is a solution ${\bf y}(t)$ under the parameter $\tau$.
We shall call (\ref{time-desing-para}) {\em the time-scale desingularization of order $k+1$}.
\end{dfn}
The change of coordinate and the above desingularization yield the following vector field $g = (g_1, \ldots, g_n)^T$, which is continuous on $\widetilde{\mathcal{D}} \equiv \mathcal{D}\cup \mathcal{E}$:
\begin{align*}
\dot x_i \equiv \frac{dx_i}{d\tau} = g_i({\bf x}) = q({\bf x}) \left\{ \tilde f_i({\bf x}) - \alpha_i x_i \sum_{j=1}^n (\nabla \kappa)_j \kappa^{\alpha_j - 1}\tilde f_j({\bf x})\right\},
\end{align*}
where 
\begin{equation}
\label{f-tilde}
\tilde f_j({\bf x}) := \kappa^{-(k+\alpha_j)} f_j(\kappa^{\Lambda_\alpha}{\bf x}),\quad j=1,\ldots, n,
\end{equation}
and
$\nabla \kappa = \nabla_{\bf x} \kappa = ((\nabla_{\bf x} \kappa)_1, \ldots, (\nabla_{\bf x} \kappa)_n)^T$ is
\begin{equation*}
(\nabla_{\bf x} \kappa)_j 
= \begin{cases}
\frac{\kappa^{1-\alpha_j} x_j^{2\beta_j-1}}{\alpha_j q({\bf x}) } & j\in I_\alpha, \\
0 & \text{otherwise},
\end{cases}
\end{equation*}
as derived in \cite{MT2020_1}.
In particular, the vector field $g$ and the differential equation is simply written as follows:
\begin{align}
\label{desing-para}
\dot {\bf x} = g({\bf x}) = q({\bf x}) \tilde f({\bf x}) - G({\bf x})\Lambda_\alpha {\bf x},
\end{align}
where $\tilde f = (\tilde f_1,\ldots, \tilde f_n)^T$ and
\begin{align}
\label{Gx}
G({\bf x}) &\equiv \sum_{j\in I_\alpha} \frac{x_j^{2\beta_j-1}}{\alpha_j}\tilde f_j({\bf x}).
\end{align}
Smoothness of $f$ and its asymptotic quasi-homogeneity guarantee the smoothness of the right-hand side $g$ of (\ref{desing-para}) including the horizon $\mathcal{E}\equiv \{p({\bf x}) = 1\}$.
In particular, {\em dynamics at infinity}, such as divergence of solutions to specific directions,  is characterized through dynamics generated by (\ref{desing-para}) around the horizon. 
See \cite{Mat2018, MT2020_1} for details.

\begin{rem}[Invariant structure]
\label{rem-invariance}
The horizon $\mathcal{E}$ is invariant in $\widetilde{\mathcal{D}}$. 
Indeed, direct calculations yield that
\begin{equation*}
\left. \frac{d}{d \tau}p({\bf x}(\tau))^{2c}\right|_{\tau=0} = 0\quad \text{ whenever }\quad {\bf x}(0)\in \mathcal{E}.
\end{equation*}
In particular, $\mathcal{E}$ is a codimension-one invariant submanifold of $\widetilde{\mathcal{D}}$ if $I_\alpha = \{1,\ldots, n\}$.
See e.g. \cite{Mat2018}, where detailed calculations are shown in a similar type of global embeddings.
We shall apply this invariant structure to extracting the detailed blow-up structure later.
\end{rem}

\section{Normally hyperbolic blow-up}
\label{section-blowup-NHIM}

In the present and the next sections, we state the main results: description of blow-up solutions by means of solutions shadowing \lq\lq trajectories at infinity".
Key features of blow-up description we shall show here are that, for the desingularized vector field, the convergence to a trajectory on the horizon at a suitably fast rate provides finite-time blow-up, and that the asymptotic blow-up profile is described by the shadowed trajectory, which are extensions of blow-up characterization derived in preceding works (e.g., \cite{Mat2018}).
The shadowing property is geometrically described by {\em asymptotic phase}, which is observed in invariant manifolds with \lq\lq hyperbolic" property.
A suitable class of such manifolds is NHIMs, {\color{black}in which case we can say} that {\em NHIMs on the horizon induce blow-up solutions}.
\par 
Another issue in blow-up description is the magnitude of blow-ups referred to as {\em blow-up rates}. 
In the present argument, we {\em assume convergence to shadowed trajectories with specific rates}, which is stronger than typical convergence properties observed in asymptotic phase.
These convergence rates can relate to spectral properties of associated linearized systems, which are left to Appendix \ref{section-SSspec}, where the correspondence by means of {\em dichotomy spectrum} established in the theory of linear differential systems (e.g., \cite{SS1974, SS1978}) is discussed.
\par
\bigskip
Now let $f: U\to \mathbb{R}^n$ be a smooth (at least $C^1$) asymptotically quasi-homogeneous vector field of type $\alpha = (\alpha_1, \ldots, \alpha_n)$ associated with the set of homogeneity indices $I_\alpha$ with an admissible domain $U\subset \mathbb{R}^n$, and order $k+1$, and consider the (autonomous) system
\begin{equation}
\label{ODE-aut}
{\bf y}' = f({\bf y}).
\end{equation}
Also, let $g$ be the desingularized vector field (\ref{desing-para}) associated with $f$ and the quasi-parabolic embedding $T_{{\rm para};\alpha}$ of type $\alpha$, and $\varphi_g$ be the flow generated by $g$.
The result is also valid for directional embeddings summarized in Appendix \ref{sec:dir}, and possibly other ones with suitable assumptions\footnote{
As far as we understand, other possible choices of compactificaitons are {\em Poincar\'{e}-type} ones (e.g., \cite{Mat2018}) and Poincar\'{e}-Lyapunov disks (e.g., \cite{DH1999}). 
Roughly speaking, the former is the embedding of $\mathbb{R}^n$ into the hemisphere in $\mathbb{R}^{n+1}$, and the latter is the change of variables with quasi-polar coordinates.
}.


First we provide a general description of blow-up solutions by means of solutions shadowing \lq\lq trajectories at infinity".
\begin{thm}[Blow-up: a general description]
\label{thm-blowup-fund}
Suppose that $g$ admits a trajectory $\gamma = \{{\bf x}_\gamma(\tau)\}_{\tau \in \mathbb{R}}\subset \mathcal{E}$ such that, for some ${\bf y}_0\in U$ and the solution ${\bf y}(\tau)$ to (\ref{ODE-aut}) through ${\bf y}_0$ at $\tau = 0$, 
\begin{align}
\label{asymptotic-main}
\|T_{{\rm para};\alpha}({\bf y}(\tau)) - {\bf x}_\gamma(\tau)\| < C e^{-\lambda \tau}
\end{align}
for constants $C > 0$ and $\lambda > 0$.
Then $t_{\max} < \infty$ holds; namely, ${\bf y}(t)$ is a blow-up solution.
\par
If we further assume 
\begin{equation}
\label{decay-precise-NHIM}
\lim_{\tau \to +\infty}e^{(\lambda + \epsilon) \tau} \left(1 - p( T_{{\rm para};\alpha}({\bf y}(\tau))  )^{2c}\right) 
= \begin{cases}
+\infty & \epsilon > 0 \\
0 & \epsilon < 0 
\end{cases},
\end{equation}
then 
\begin{align*}
p({\bf y}(t)) \sim C_0(-\ln (t_{\max}-t)) (t_{\max}-t)^{-1/k}\quad \text{ as }\quad t \to t_{\max}-0
\end{align*}
for some function $C_0(-\ln (t_{\max}-t))$ satisfying
\begin{equation*}
C_0(\tau) = o(e^{\epsilon \tau}),\quad C_0(\tau)^{-1} = o(e^{\epsilon \tau})
\end{equation*}
for any $\epsilon > 0$ as $\tau \to \infty$, equivalently $-\ln (t_{\max}-t) \to +\infty$.
Moreover, we have
\begin{equation*}
y_i(t) \sim C_0(-\ln (t_{\max}-t))^{\alpha_i} x_{\gamma,i} (-\bar c\ln(t_{\max} - t)) (t_{\max}-t)^{-\alpha_i /k} \quad \text{ as }\quad t \to t_{\max}-0
\end{equation*} 
for some constant $\bar c > 0$, provided $x_{\gamma,i} (\tau)\not \to 0$ as $\tau \to \infty$.
\end{thm}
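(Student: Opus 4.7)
The plan is to translate the hypothesis (\ref{asymptotic-main}) into pointwise decay of $u(\tau) := 1 - p({\bf x}(\tau))^{2c}$ (with ${\bf x}(\tau) = T_{{\rm para};\alpha}({\bf y}(\tau))$), then feed this into the time-scale transformation (\ref{time-desing-para}) to control both the finiteness of $t_{\max}$ and the asymptotic size of $\kappa({\bf x}(\tau))$, and finally read off the blow-up profile from the relation $y_i = \kappa^{\alpha_i} x_i$. First, since $p({\bf x}_\gamma(\tau))^{2c} \equiv 1$ on $\mathcal{E}$ and $p^{2c}$ is smooth on a neighborhood of the horizon, (\ref{asymptotic-main}) combined with a Lipschitz estimate gives $|u(\tau)| \leq C' e^{-\lambda \tau}$ for some $C' > 0$. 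The function $q({\bf x})$ is bounded (it lies in $[\tfrac{1}{2c},1]$ on $\mathcal{D} \cup \mathcal{E}$), so the integrand in
\begin{equation*}
t_{\max} - t_0 = \int_0^\infty q({\bf x}(\tau'))\, u(\tau')^k\, d\tau'
\end{equation*}
is bounded by $(C')^k e^{-k\lambda \tau'}$, yielding $t_{\max} < \infty$.

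For the refined statement, the hypothesis (\ref{decay-precise-NHIM}) rewrites as $u(\tau) = e^{-\lambda \tau} h(\tau)$ with $\log h(\tau) = o(\tau)$, that is, for every $\delta > 0$ one has $e^{-\delta \tau} \leq h(\tau) \leq e^{\delta \tau}$ eventually. I then estimate the tail integral $t_{\max} - t = \int_\tau^\infty q\, u^k\, d\tau'$ by a Laplace-type argument: using $q \to 1$ as $p \to 1$ and the two-sided bounds $e^{-(\lambda+\delta)\tau'} \lesssim u(\tau') \lesssim e^{-(\lambda-\delta)\tau'}$, I obtain
\begin{equation*}
\frac{1 + o(1)}{k(\lambda + \delta)}\, u(\tau)^k \;\leq\; t_{\max} - t \;\leq\; \frac{1 + o(1)}{k(\lambda - \delta)}\, u(\tau)^k.
\end{equation*}
Since $\delta$ is arbitrary, this gives $\log(t_{\max} - t) = k \log u(\tau) + o(\tau)$, equivalently $-\log(t_{\max} - t) = k\lambda \tau + o(\tau)$, so $\tau = \bar c\,(-\log(t_{\max}-t))(1 + o(1))$ with $\bar c = 1/(k\lambda)$.

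To extract the blow-up rates I invoke the quasi-homogeneity $p({\bf y})^{2c} = \kappa^{2c} p({\bf x})^{2c}$ inherited from the definition (\ref{parabolic-cpt}) of the embedding, so $p({\bf y}(t)) = \kappa({\bf x}(\tau))\, p({\bf x}(\tau)) \sim \kappa({\bf x}(\tau)) = u(\tau)^{-1}$ because $p({\bf x}(\tau)) \to 1$. Combining with the previous step, $\kappa = u^{-1} \sim (t_{\max}-t)^{-1/k} \cdot e^{o(-\log(t_{\max}-t))}$; defining $C_0(s) := e^{s/k}\, u(s/(k\lambda))^{-1}\cdot(t_{\max}-t)^{1/k}$-type correction (more plainly, $C_0(s) := \kappa(\tau)/(t_{\max}-t)^{-1/k}$ written as a function of $s = -\log(t_{\max}-t)$), the inequality just proved forces $\log C_0(s) = o(s)$, i.e., the required $C_0(s),\,C_0(s)^{-1} = o(e^{\epsilon s})$ for every $\epsilon > 0$. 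Coordinate-wise, $y_i(t) = \kappa(\tau)^{\alpha_i}\, x_i(\tau) = \kappa(\tau)^{\alpha_i}\bigl(x_{\gamma,i}(\tau) + O(e^{-\lambda \tau})\bigr)$; when $x_{\gamma,i}(\tau) \not\to 0$ this error is subdominant, and substituting $\tau = -\bar c \log(t_{\max}-t) (1+o(1))$ together with the preceding estimate for $\kappa^{\alpha_i}$ produces the stated profile.

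The main obstacle is the Laplace-integral step: the hypothesis (\ref{decay-precise-NHIM}) is only a logarithmic-scale statement, yet I need to conclude both a sharp $(t_{\max}-t)^{-1/k}$ leading order \emph{and} a clean characterization of the slowly varying corrector $C_0$, and then to invert the $\tau \leftrightarrow t$ correspondence consistently throughout Steps 3--4. The fact that we can only control the exponential rate up to subexponential error is exactly what forces the appearance of $C_0$ rather than a literal constant; keeping the error bookkeeping tight but honest (avoiding implicit stronger regularity on $u$ or on $\gamma$) is the delicate part, while the substitution $x_{\gamma,i}(\tau)\mapsto x_{\gamma,i}(-\bar c\log(t_{\max}-t))$ is a leading-order statement that only requires $\gamma$ to be a continuous trajectory of $g$.
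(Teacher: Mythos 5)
Your argument is correct and follows essentially the same route as the paper's proof: you bound $t_{\max}$ by integrating $q({\bf x}(\tau))\,(1-p({\bf x}(\tau))^{2c})^k$ in $\tau$, with $1-p({\bf x}(\tau))^{2c} = O(e^{-\lambda\tau})$ coming from (\ref{asymptotic-main}) (your Lipschitz bound on $p^{2c}$ near $\mathcal{E}$ plays the role of the paper's explicit factorization of $\sum x_{\gamma,i}^{2\beta_i}-\sum x_i^{2\beta_i}$), and then, under (\ref{decay-precise-NHIM}), pass to the logarithmic relation $\ln(t_{\max}-t) = -k\lambda\tau + o(\tau)$ and read off $p({\bf y})$ and $y_i$ through $\kappa = (1-p({\bf x})^{2c})^{-1}$, exactly as in the paper. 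One small caveat: your displayed two-sided tail estimate with prefactors $(1+o(1))/(k(\lambda\pm\delta))$ and $u(\tau)^k$ on both sides is not literally justified as written (converting $e^{-k(\lambda\mp\delta)\tau}$ back into $u(\tau)^k$ costs factors of order $e^{\pm 2k\delta\tau}$), but since you only use its logarithmic-scale consequence and $\delta>0$ is arbitrary, the final conclusions are unaffected.
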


\begin{rem}
The asymptotic behavior (\ref{decay-precise-NHIM}) covers the cases
\begin{equation*}
\lim_{\tau \to +\infty}\frac{ 1 - p( T_{{\rm para};\alpha}({\bf y}(\tau))  )^{2c} }{e^{-\lambda \tau}\tau^m} = c_0 > 0,
\end{equation*}
for some integer $m$, or
\begin{equation*}
0\leq c_1 \leq \frac{ 1 - p( T_{{\rm para};\alpha}({\bf y}(\tau))  )^{2c} }{e^{-\lambda \tau}\tau^m} < c_2
\end{equation*}
for all sufficiently large $\tau > 0$.
\end{rem}

\begin{proof}
Let ${\bf x}(\tau) = T_{{\rm para};\alpha}({\bf y}(\tau))$ in the $\tau$-timescale.
The maximal existence time $t_{\max}$ for ${\bf y}(t)$ in the $t$-timescale is then estimated through (\ref{time-desing-para}) as follows:
\begin{align*}
t_{\max} &\equiv \int_{0}^\infty q({\bf x}(\tau))(1-p({\bf x}(\tau))^{2c})^k d\tau \leq \int_{0}^\infty \{1-p( {\bf x}(\tau) )^{2c}\}^k d\tau
\end{align*}
from the fact that $q({\bf x}) \leq 1$.
On the other hand, because ${\bf x}(\tau)$ shadows ${\bf x}_\gamma(\tau)$ in the sense of (\ref{asymptotic-main}), the quantity $1-p( {\bf x}(\tau) )^{2c}$ can be rewritten as
\begin{align}
\notag
1 - p({\bf x}(\tau))^{2c} &= 1 - \sum_{i\in I_\alpha}x_i(\tau)^{2\beta_i} \\
\notag
	&= \sum_{i\in I_\alpha}x_{\gamma,i}(\tau)^{2\beta_i} - \sum_{i\in I_\alpha}x_i(\tau)^{2\beta_i} \\
\notag
	&= \sum_{i\in I_\alpha} (x_{\gamma,i}(\tau)^2 - x_i(\tau)^2)\left\{ \sum_{j=0}^{\beta_i - 1} x_{\gamma,i}(\tau)^{2(\beta_i-1-j)}x_i(\tau)^{2j} \right\}\\
\label{expansion-1mp}
	&= \sum_{i\in I_\alpha} ( x_{\gamma,i}(\tau) - x_i(\tau) )(x_{\gamma,i}(\tau) + x_i(\tau)) \left\{ \sum_{j=0}^{\beta_i - 1} x_{\gamma,i}(\tau)^{2(\beta_i-1-j)}x_i(\tau)^{2j} \right\}
\end{align}
and, using the inequality
\begin{equation*}
\left\| ( x_{\gamma,i}(\tau) - x_i(\tau) )(x_{\gamma,i}(\tau) + x_i(\tau)) \left\{ \sum_{j=0}^{\beta_i - 1} x_{\gamma,i}(\tau)^{2(\beta_i-1-j)}x_i(\tau)^{2j} \right\} \right\| \leq C_i e^{\lambda \tau} 
\end{equation*}
obtained from (\ref{asymptotic-main}) for some constant\footnote{
Because only components $x_i$ and $x_{\gamma,i}$ with $i\in I_\alpha$ are considered, we have $|x_i(\tau)|, |x_{\gamma,i}(\tau)|\leq 1$ for all $\tau \geq 0$.
} $C_i > 0$, we have
\begin{align*}
t_{\max} &\leq \int_{0}^\infty \{1-p( {\bf x}(\tau) )^{2c}\}^k d\tau
	\leq \tilde C \int_{0}^\infty e^{k\lambda \tau} d\tau < \infty
\end{align*}
for some constant $\tilde C > 0$.
In particular, ${\bf y}(t)$ in the original $t$-timescale is a blow-up solution.
\par
Next we assume (\ref{decay-precise-NHIM}).
For $t$ less than but sufficiently close to $t_{\max}$, we have
\begin{equation*}
t_{\max} - t = \int_{\tau}^\infty q({\bf x}(\tilde \tau)) (1-p({\bf x}(\tilde \tau))^{2c})^k d\tilde \tau
\end{equation*}
with sufficiently large $\tau$, and the asymptotic expression of the right-hand side is our interest here.
From (\ref{decay-precise-NHIM}) we directly have
\begin{align*}
1 - p({\bf x}(\tau))^{2c} 
	&= e^{-\lambda \tau}\bar F(\tau),
\end{align*}
where $\bar F(\tau) = o(e^{\epsilon \tau})$ and $\bar F(\tau)^{-1} = o(e^{\epsilon \tau})$  for any $\epsilon > 0$ as $\tau\to \infty$.
Therefore we have
\begin{equation*}
\frac{1}{k}\ln(t_{\max} - t) = - \lambda \tau + \tilde F(\tau)
\end{equation*}
as $\tau \to \infty$, where $\tilde F(\tau) = o(\tau)$ and $\tilde F(\tau)^{-1} = o(\tau)$ as $\tau \to \infty$.
The above estimates are combined to obtain
\begin{equation*}
p({\bf y}(t)) = \left\{ 1 - \sum_{i \in I_\alpha} x_i(t)^{2\beta_i} \right\}^{-1}\left( \sum_{i \in I_\alpha} x_i(t)^{2\beta_i} \right)^{1/2c} \sim C_0(-\ln (t_{\max}-t))(t_{\max}-t)^{-1/k}
\end{equation*}
as $t\to t_{\max}$, and the proof involving the asymptotic behavior of $p({\bf y}(t))$ is completed.
The last statement follows from the definition of $T_{{\rm para};\alpha}$ (Definition \ref{dfn-quasi-para}) and asymptotic relations we have obtained.
\end{proof}

A usual way to verify assumptions, in particular (\ref{asymptotic-main}), is to construct a collection of trajectories on the horizon guaranteeing the shadowing property in advance.
One specific characterization of such a property is {\em asymptotic phase}. 
Once one can construct invariant sets admitting asymptotic phase with appropriate convergence rate, assumptions involving (\ref{asymptotic-main}) are satisfied and blow-up solutions can be described by means of invariant manifolds for $g$ on the horizon.
For example, when NHIMs on the horizon are constructed, our description is {\color{black}rephrased} as follows, which shall be called {\em normally hyperbolic blow-up}.
Note that other characterizations of asymptotic phase (cf. Remark \ref{rem-asym-phase}) {\color{black}will} yield the similar results.

\begin{cor}[Normally hyperbolic blow-up]
\label{cor-NH-blowup-1}
Suppose that $g$ admits a compact connected NHIM $M \subset \mathcal{E}$ with $\partial M = \emptyset$.
If the solution ${\bf y}(t)$ of (\ref{ODE-aut}) with a bounded initial point ${\bf y}_0 \in \mathbb{R}^n$ whose image ${\bf x}(\tau) = T_{{\rm para};\alpha}({\bf y}(\tau))$ is on $W_{\rm loc}^s(M; \varphi_g)$, then $t_{\max} < \infty$ holds; namely, ${\bf y}(t)$ is a blow-up solution.
Moreover, there is a trajectory $\gamma = \{{\bf x}_\gamma(\tau)\}_{\tau \geq 0}$ for (\ref{desing-para}) on $M\subset \mathcal{E}$ such that (\ref{asymptotic-main}) holds.
\end{cor}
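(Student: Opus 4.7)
The plan is to derive this corollary directly from Theorem \ref{thm-blowup-fund} by exploiting the stable invariant foliation of the NHIM $M$. The key observation is that the shadowing hypothesis (\ref{asymptotic-main}) in Theorem \ref{thm-blowup-fund} is exactly the fiberwise exponential-decay property (\ref{estimate-NHIM-exp}) of Proposition \ref{prop-summary-NHIM} applied to the desingularized flow $\varphi_g$, so the only real work is to extract a suitable shadowed trajectory $\gamma$ on $\mathcal{E}$ and identify the decay rate.

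Concretely, since $M \subset \mathcal{E}$ is a boundaryless compact NHIM for $\varphi_g$, Proposition \ref{prop-summary-NHIM} yields the stable invariant foliation $W^s_{\rm loc}(M;\varphi_g) = \bigcup_{p \in M} W^s_{\rm loc}(p)$ together with constants $C_s > 0$ and $\lambda_s < 0$ governing fiberwise contraction. By hypothesis, ${\bf x}(0) = T_{{\rm para};\alpha}({\bf y}_0)$ lies on a unique fiber $W^s_{\rm loc}(p)$ with basepoint $p \in M$; I define the candidate shadowed trajectory by ${\bf x}_\gamma(\tau) := \varphi_g(\tau, p)$ for $\tau \geq 0$. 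Since $M$ is $\varphi_g$-invariant and $p \in M \subset \mathcal{E}$, the whole curve $\gamma$ stays in $M \subset \mathcal{E}$. Applying (\ref{estimate-NHIM-exp}) to the pair $(p, {\bf x}(0))$ gives
\begin{equation*}
\|T_{{\rm para};\alpha}({\bf y}(\tau)) - {\bf x}_\gamma(\tau)\| = \|\varphi_g(\tau, {\bf x}(0)) - \varphi_g(\tau, p)\| < C_s e^{\lambda_s \tau}
\end{equation*}
for all $\tau \geq 0$, which is precisely the shadowing estimate (\ref{asymptotic-main}) with $C = C_s$ and $\lambda = -\lambda_s > 0$. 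Invoking Theorem \ref{thm-blowup-fund} then produces $t_{\max} < \infty$, completing the proof.

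I do not anticipate a genuine obstacle, since the corollary amounts to a repackaging of Theorem \ref{thm-blowup-fund} together with the invariant-foliation clause of Proposition \ref{prop-summary-NHIM}. The one mild point worth noting is that Proposition \ref{prop-summary-NHIM}, as stated for compact NHIMs in an ambient Euclidean space, must be applied to $\varphi_g$ on a neighborhood of $M \subset \mathcal{E} \subset \widetilde{\mathcal{D}}$: this is legitimate because $g$ is smooth up to and including the horizon (cf. Remark \ref{rem-choice-cpt}), so $\varphi_g$ is a smooth local flow on a full neighborhood of $M$ in $\mathbb{R}^n$ after any smooth extension of $g$ past $\mathcal{E}$; the extension plays no role since $\mathcal{E}$ is $\varphi_g$-invariant by Remark \ref{rem-invariance}, and the classical NHIM machinery applies verbatim.
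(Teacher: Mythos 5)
Your proposal is correct and follows essentially the same route as the paper: identify the basepoint $p\in M$ of the stable fiber through ${\bf x}(0)$, take $\gamma$ to be its $\varphi_g$-trajectory in $M\subset\mathcal{E}$, obtain (\ref{asymptotic-main}) directly from the fiberwise decay estimate (\ref{estimate-NHIM-exp}), and conclude via Theorem \ref{thm-blowup-fund}. Your added remark on the smoothness of $g$ up to the horizon and the invariance of $\mathcal{E}$ is a harmless elaboration of what the paper leaves implicit.
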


\begin{proof}
Thanks to the stable foliation of $W_{\rm loc}^s(M) \equiv W_{\rm loc}^s(M;\varphi_g)$, there is a trajectory $\gamma = \{{\bf x}_\gamma(\tau) \}_{\tau \geq 0} \subset M$ such that ${\bf x}(0) = T_{{\rm para};\alpha}({\bf y}_0)\in W_{\rm loc}^s ({\bf x}_\gamma(0))$ and ${\bf x}(\tau) = T_{{\rm para};\alpha}({\bf y}(\tau))\in W_{\rm loc}^s ({\bf x}_\gamma(\tau))$ for all $\tau \geq 0$.
The decay estimate (\ref{asymptotic-main}) is the direct consequence of (\ref{estimate-NHIM-exp}).
\end{proof}
As mentioned in the beginning, description of {\em blow-up rates} requires more precise information about convergence rates. 
Details are left to Appendix \ref{section-SSspec}.

\section{Nonautonomous blow-up}
\label{section-blowup-nonaut}

Next we describe blow-up solutions in the {\em nonautonomous system}
\begin{equation}
\label{ODE-non-autonomous}
{\bf y}' = f(t,{\bf y}),\quad {\bf y}(t_0) = {\bf y}_0\in U
\end{equation}
with $t_0\in \mathbb{R}$, an open set $U\subset \mathbb{R}^n$ and a $C^r$-mapping $f:\mathbb{R}\times U\to \mathbb{R}^n$ with $r\geq 1$.
While the essential idea to describe blow-up solutions is the same as the autonomous case, namely embeddings of phase spaces, we need to treat nonautonomous terms appropriately.
First, we have to consider a \lq\lq natural scaling of time $t$" so that dynamics at infinity can be considered in the similar way to autonomous cases.
Second, a natural determination of \lq\lq invariant sets at infinity" for nonautonomous systems is necessary so that the similar ideas to autonomous systems can be applied.
We shall see later that NHIMs will be an appropriate class to describe blow-up solutions in nonautonomous systems.
\par
In the present section, we first propose a natural treatment of the time variable $t$ in our consideration of dynamics at infinity based on {\em extended autonomous systems}, and provide a description of blow-up solutions in a general setting then.
To apply the description to e.g., NHIMs, a geometric technique is introduced so that invariant manifolds with boundary can be treated within the framework of {\em compact, inflowing invariant manifolds}.
Details of the technique are stated in Section \ref{sec-arrangement-NHIM-nonaut}. 
With the help of this technique, blow-ups in the extended autonomous systems can be characterized in the similar way to Section \ref{section-blowup-NHIM}.

\subsection{Desingularized vector fields for nonautonomous systems}
\label{section-dyn-infty-nonaut}

One natural treatment to consider (\ref{ODE-non-autonomous}) is to regard the time variable $t$ as another independent variable depending on an extra variable $\eta$, namely regard (\ref{ODE-non-autonomous}) as the {\em extended autonomous system}:
\begin{equation}
\label{ODE-non-autonomous-extended}
\frac{d{\bf y}}{d\eta} = f(t,{\bf y}),\quad \frac{dt}{d\eta} = 1,\quad {\color{black}(t(0), {\bf y}(0))} = (t_0, {\bf y}_0)\in \mathbb{R}\times U.
\end{equation}
We pay attention to the scaling of the \lq\lq time" variable $t$ so that the extended system is regarded as an asymptotically quasi-homogenenous system.
Because the motion of time does not originally change as solution evolve, it is natural to regard the scaling of $t$ as 
\begin{equation}
\label{scaling-time}
t = \kappa^{0} \tilde t = \tilde t,
\end{equation}
indicating that {\em the time-variable is not scaled depending on the location of trajectories}.
Once this regulation is determined, the machinery provided in Section \ref{section-preliminaries} is applied to (\ref{ODE-non-autonomous-extended}) towards a blow-up description in a similar way to autonomous systems.
We shall describe the desingularized vector field associated with (\ref{ODE-non-autonomous}) under the assertion (\ref{scaling-time}) with the quasi-parabolic embedding.
Application of directional embeddings (Appendix \ref{sec:dir}) is derived in the similar way.
\par
\bigskip
Throughout the rest of this section, given a system (\ref{ODE-non-autonomous}), assume that the extended vector field (\ref{ODE-non-autonomous-extended}) is asymptotically quasi-homogeneous of type $\alpha = (0, \alpha_1, \ldots, \alpha_n)$ and order $k+1$ such that the set $\mathbb{R}\times U$ is admissible with respect to $\alpha$.
Note that the exponent $0$ in the first component of $\alpha$ stems from the scaling (\ref{scaling-time}).
In particular, the associated set of homogeneity indices $I_\alpha$ is necessarily a proper subset of $\{0, 1, \ldots, n\}$.
We then apply the quasi-parabolic embedding in Definition \ref{dfn-quasi-para}. 
In the present case, the functional $p$ in (\ref{func-p}), written as $p(t,{\bf y})$, is independent of $t$ and we shall identify $p(t,{\bf y})$ with $p({\bf y})$.
Using this identification, derive the desingularized vector field introducing
\begin{equation}
\label{time-desing-nonaut}
d\tau = (1-p({\bf x})^{2c})^{-k}q({\bf x})^{-1} d\eta
\end{equation}
in a usual manner, where $q({\bf x})$ is given in (\ref{time-desing-para}).
We then obtain the corresponding desingularized vector field, which can be written by
\begin{equation}
\label{desing-para-nonaut}
\frac{d}{d\tau}\begin{pmatrix}
t \\ {\bf x}
\end{pmatrix} = g(t,{\bf x}) \equiv q({\bf x}) \bar f(t, {\bf x}) - G(t, {\bf x})\bar \Lambda_\alpha \begin{pmatrix}
t \\ {\bf x}
\end{pmatrix}
\end{equation}
with the following notations, which are consistent with the general derivation of desingularized vector fields in autonomous systems:
\begin{align*}
\bar f(t,{\bf x}) &= \begin{pmatrix}
\tilde f_0, \tilde f_1, \ldots, \tilde f_n
\end{pmatrix}^T\quad \text{ with }\quad f_0(t,{\bf y}) = 1,\\
G(t, {\bf x}) &\equiv \sum_{j\in I_\alpha} \frac{x_j^{2\beta_j-1}}{\alpha_j}\tilde f_j(t, {\bf x}),\quad 
\bar \Lambda_\alpha = {\rm diag}(0, \alpha_1, \ldots, \alpha_n).
\end{align*}
Note that, in the above notation, $\tilde f_0(t,{\bf x}) = (1-p({\bf x})^{2c})^k$ via (\ref{f-tilde}).
The above identifications are also consistent with the evolution of $t$ followed by the time-scale desingularization (\ref{time-desing-nonaut}).

\subsection{Fundamental description of blow-ups}

Like autonomous cases, geometric description of blow-ups for (\ref{ODE-non-autonomous-extended}) is based on the construction of invariant manifolds with \lq\lq hyperbolic" structure on the horizon for the desingularized vector field (\ref{desing-para-nonaut}).
On the other hand, $dt / d\tau = 0$ on the horizon $\mathcal{E}= \{p({\bf x}) = 1\}$, and invariant sets on $\mathcal{E}$ in the present case will consist of slices of invariant sets parameterized by the {\em fixed} variable $t$, the first component of phase variables $(t, {\bf x})$.
Therefore, even in the simplest case such as \lq\lq equilibria" on the horizon, they are not isolated.
In this case, \lq\lq hyperbolic structure" will not be expected for invariant sets on the horizon.
Instead, {\em normal}, or {\em partial hyperbolicity} can be naturally considered regarding the evolution in $t$-variable as the tangential direction.
In particular, the concept of normally hyperbolic blow-up stated in Section \ref{section-blowup-NHIM} will be naturally applied in practical problems.
\par
As in the previous section, we only consider quasi-parabolic embeddings.
Fix the corresponding embedding $T_{{\rm para};\alpha}$ associated with $f$.
Let $g$ be the desingularized vector field (\ref{desing-para-nonaut}) associated with $f$ and $T_{{\rm para};\alpha}$.
Also, let $\varphi_g$ be the flow generated by {\color{black}$g(t,{\bf x})$ in (\ref{desing-para-nonaut})}.
Finally, let $\pi_\nu$ be the projection onto the $\nu$-component, $\nu = t, {\bf x}$.
\par
\bigskip
Here we describe blow-up solutions in a general setting.
As seen in Section \ref{section-blowup-NHIM}, our blow-up description; existence and blow-up rates, relies on the existence of {\em asymptotic phase} and {\em precise exponential decay behavior} of trajectory shadowing invariant sets on the horizon for $g$.
Because the desingularized vector field $g(t,{\bf x})$ essentially has the same structure as $g$ in autonomous cases, (\ref{desing-para}), the similar calculations and estimates yield the corresponding description of blow-ups.

\begin{thm}[Nonautonomous blow-up: a general description]
\label{thm-blowup-nonaut-fund}
Suppose that $g$ {\color{black}in (\ref{desing-para-nonaut})} admits a precompact trajectory $\gamma = \{{\bf x}_\gamma(\tau)\}_{\tau \in \mathbb{R}}\subset \mathcal{E}$ such that, for some $(t_0, {\bf y}_0)\in U$ and the solution ${\bf y}(\tau)$ through $(t_0, {\bf y}_0)$ at {\color{black}$\tau = 0$,}
\begin{align}
\label{asymptotic-main-nonaut}
\lim_{\tau \to +\infty} \|\pi_{\bf x} T_{{\rm para};\alpha}(t(\tau), {\bf y}(\tau) ) - {\bf x}_\gamma(\tau)\| = 0.
\end{align}
Then $t_{\max} < \infty$ holds; namely, ${\bf y}(t)$ is a blow-up solution.
\par
If we further assume 
\begin{align}
\label{decay-precise-nonaut}
\lim_{\tau\to +\infty}e^{(\lambda + \epsilon)\tau}\left( 1 - p( T_{{\rm para};\alpha}(t(\tau), {\bf y}(\tau))  )^{2c} \right)
= \begin{cases}
+\infty & \epsilon > 0 \\
0 & \epsilon < 0 
\end{cases}
\end{align}
for a positive constant $\lambda > 0$, 
then 
\begin{align*}
p({\bf y}(t)) \sim C_0(-\ln (t_{\max}-t)) (t_{\max}-t)^{-1/k}\quad \text{ as }\quad t \to t_{\max}-0
\end{align*}
for some function $C_0(-\ln (t_{\max}-t))$ satisfying
\begin{equation*}
C_0(\tau) = o(e^{\epsilon \tau}),\quad C_0(\tau)^{-1} = o(e^{\epsilon \tau})
\end{equation*}
for any $\epsilon > 0$ as $\tau \to \infty$, equivalently $-\ln (t_{\max}-t) \to +\infty$.
Moreover, we have
\begin{equation*}
y_i(t) \sim C_0(-\ln (t_{\max}-t))^{\alpha_i} x_{\gamma,i} (-\bar c\ln(t_{\max} - t)) (t_{\max}-t)^{-\alpha_i /k} \quad \text{ as }\quad t \to t_{\max}-0
\end{equation*} 
for some constant $\bar c > 0$, provided $x_{\gamma,i} (\tau)\not \to 0$ as $\tau \to \infty$.
\end{thm}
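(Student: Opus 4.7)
The plan is to treat the extended system (\ref{ODE-non-autonomous-extended}) as an autonomous system in the phase variables $(t,{\bf x})$ and mimic the argument for Theorem \ref{thm-blowup-fund}. The first component of the desingularized vector field (\ref{desing-para-nonaut}) collapses to
\begin{equation*}
\frac{dt}{d\tau} = q({\bf x})(1-p({\bf x})^{2c})^k
\end{equation*}
since $\tilde f_0 \equiv (1-p({\bf x})^{2c})^k$ and the first row of $\bar\Lambda_\alpha$ vanishes, so the maximal existence time is exactly
\begin{equation*}
t_{\max} - t_0 = \int_0^\infty q({\bf x}(\tilde\tau))(1-p({\bf x}(\tilde\tau))^{2c})^k\, d\tilde\tau,
\end{equation*}
and proving $t_{\max}<\infty$ amounts to bounding this integral.

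For the first claim I would reuse the algebraic identity from the proof of Theorem \ref{thm-blowup-fund}. Since $\gamma\subset\mathcal{E}$ gives $\sum_{i\in I_\alpha} x_{\gamma,i}(\tau)^{2\beta_i}=1$, we expand
\begin{equation*}
1 - p({\bf x}(\tau))^{2c} = \sum_{i\in I_\alpha}(x_{\gamma,i}(\tau)-x_i(\tau))(x_{\gamma,i}(\tau)+x_i(\tau))\sum_{j=0}^{\beta_i-1} x_{\gamma,i}(\tau)^{2(\beta_i-1-j)}x_i(\tau)^{2j}.
\end{equation*}
Precompactness of $\gamma$ bounds the $x_{\gamma,i}$-factors uniformly, (\ref{asymptotic-main-nonaut}) bounds the $x_i$-factors eventually, and the differences $x_{\gamma,i}(\tau)-x_i(\tau)$ tend to zero, so $1-p({\bf x}(\tau))^{2c}\to 0$. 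To upgrade this qualitative decay to integrability, I would invoke the Lyapunov-type identity
\begin{equation*}
\frac{d}{d\tau}\bigl(1-p({\bf x})^{2c}\bigr) = -G(t,{\bf x})\bigl(1-p({\bf x})^{2c}\bigr),
\end{equation*}
which follows from (\ref{desing-para-nonaut}) together with the defining relation $\alpha_i\beta_i = c$; precompactness of $\gamma$ and positivity of $G$ on a neighborhood of the closure of $\gamma$ in $\mathcal{E}$ then force $1-p({\bf x}(\tau))^{2c}$ to decay exponentially, yielding integrability of $(1-p^{2c})^k$ and hence $t_{\max}<\infty$.

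For the precise blow-up asymptotics under hypothesis (\ref{decay-precise-nonaut}), the remainder mirrors the second half of the proof of Theorem \ref{thm-blowup-fund} verbatim. Writing $1-p({\bf x}(\tau))^{2c}=e^{-\lambda\tau}\bar F(\tau)$ with $\bar F(\tau),\bar F(\tau)^{-1}=o(e^{\epsilon\tau})$ for every $\epsilon>0$, the identity
\begin{equation*}
t_{\max}-t(\tau) = \int_\tau^\infty q({\bf x}(s))(1-p({\bf x}(s))^{2c})^k\, ds
\end{equation*}
together with boundedness of $q$ yields $\tfrac{1}{k}\ln(t_{\max}-t) = -\lambda\tau + \tilde F(\tau)$ with $\tilde F(\tau) = o(\tau)$. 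Inverting gives $\tau \sim -\bar c\,\ln(t_{\max}-t)$ for $\bar c = 1/(k\lambda)$, and substituting into the relations $p({\bf y}) = p({\bf x})/(1-p({\bf x})^{2c})$ and $y_i = \kappa^{\alpha_i}x_i$ from Definition \ref{dfn-quasi-para}, combined with the shadowing $x_i(\tau)=x_{\gamma,i}(\tau)+o(1)$, reads off the stated profiles after absorbing subexponential corrections into $C_0$.

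The main obstacle is the first claim: the bare convergence in (\ref{asymptotic-main-nonaut}) is not quantitative enough to deliver integrability of $(1-p({\bf x}(\tau))^{2c})^k$ on its own, and the Lyapunov identity reduces the question to a lower bound on $\int_0^\infty G(t(s),{\bf x}(s))\,ds$. Establishing such a bound requires either an explicit nondegeneracy hypothesis ensuring $G\geq c_0>0$ on a neighborhood of $\overline\gamma$, or a careful analysis of (\ref{Gx}) exploiting the structure of $\gamma$ on the horizon; the second claim, by contrast, is essentially bookkeeping once (\ref{decay-precise-nonaut}) is imposed, since the first-component ODE $dt/d\tau=q(1-p^{2c})^k$ makes the translation between $\tau\to\infty$ and $t\to t_{\max}$ identical to the autonomous case.
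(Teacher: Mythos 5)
Your treatment of the first claim misses the single idea the paper's proof turns on, and, as you yourself concede, your substitute argument does not close. The hypothesis (\ref{asymptotic-main-nonaut}) carries no rate, so the autonomous-style estimate of $\int_0^\infty q({\bf x})(1-p({\bf x})^{2c})^k\,d\tau$ is indeed out of reach; but in the nonautonomous setting no such estimate is needed. Since $\tilde f_0 = (1-p({\bf x})^{2c})^k$ vanishes on the horizon, $dt/d\tau \equiv 0$ on $\mathcal{E}$, so the $t$-component of any horizon trajectory is constant, and precompactness of $\gamma$ places it inside a single slice $\mathcal{E}\cap\{t=\bar t\}$ with $\bar t$ finite. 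Combining this with the convergence (\ref{asymptotic-main-nonaut}), the paper identifies the maximal existence time of ${\bf y}(t)$ with this finite slice value, $t_{\max}=\bar t<\infty$; that is essentially the whole proof of the first claim, and the paper remarks immediately afterwards that precisely for this reason no convergence rate is required in the nonautonomous case. You write down $dt/d\tau=q({\bf x})(1-p({\bf x})^{2c})^k$ but never exploit its vanishing on $\mathcal{E}$ to pin $\gamma$ to a time slice and identify $t_{\max}$ with $\bar t$.

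Your replacement route therefore has a genuine gap. The identity $\frac{d}{d\tau}\bigl(1-p({\bf x})^{2c}\bigr)=-G(t,{\bf x})\bigl(1-p({\bf x})^{2c}\bigr)$ is correct (it follows from $\alpha_i\beta_i=c$ and the form of $q$, and is the computation behind Remark \ref{rem-invariance}), but the theorem makes no sign or size assumption on $G$ along the trajectory, so the lower bound on $\int_0^\tau G(t(s),{\bf x}(s))\,ds$ that you need for integrability of $(1-p^{2c})^k$ is an additional nondegeneracy hypothesis rather than a consequence of (\ref{asymptotic-main-nonaut}); you flag this yourself as unresolved, which means the first assertion is not proved by your argument under the stated hypotheses. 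The second half of your proposal --- writing $1-p({\bf x}(\tau))^{2c}=e^{-\lambda\tau}\bar F(\tau)$, converting $\tau\sim-\bar c\ln(t_{\max}-t)$ through the $t$-equation, and reading off the profiles of $p({\bf y})$ and $y_i$ --- is exactly how the paper handles the rates (it simply invokes the second part of the proof of Theorem \ref{thm-blowup-fund}), and that part of your proposal is fine.
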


\begin{proof}
Observe that $dt / d\tau = 0$ on the horizon $\mathcal{E}$.
From the compactness of the closure ${\rm cl}(\gamma)$ in $\mathcal{E}$, the trajectory $\gamma$ is bounded. 
Hence $\gamma$ is included in $\mathcal{E}\cap \{t = \bar t\}$ for some $\bar t$.
Combining with the convergence (\ref{asymptotic-main}), the maximal existence time $t_{\max}$ of ${\bf y}(t)$ with ${\bf y}(t_0) = {\bf y}_0$ must be equal to $\bar t < \infty$, in particular ${\bf y}(t)$ is a blow-up solution.
The remaining results follow from the same arguments as Theorem \ref{thm-blowup-fund}.
\end{proof}

In the nonautonomous setting, convergence rate of trajectories to shadowed ones is not required because finiteness of $t_{\max}$ is automatically achieved from the boundedness of $\gamma$ and the fact $dt/d\tau = 0$ on $\mathcal{E}$.
The remaining requirements to describe blow-ups are similar to those stated in e.g., {\color{black}Corollary \ref{cor-NH-blowup-1}}.

\subsection{Blow-up descriptions for concrete problems}
\label{sec-arrangement-NHIM-nonaut}

Here we address an intrinsic difficulty so that the first part of our result, Theorem \ref{thm-blowup-nonaut-fund}, can be applied and a technique to overcome it, which ensures the application of the result to a wide range of nonautonomous systems.
Theorem \ref{thm-blowup-nonaut-fund} essentially relies on the existence of asymptotic phase, which are typically ensured for {\color{black}{\em compact}} invariant manifolds.
On the other hand, as seen in simple cases (e.g., Section \ref{section-Pain}), constructed invariant manifolds on $\mathcal{E}$ for the extended system is not compact in general.
Although several generalizations to noncompact invariant manifolds can be applied (e.g., \cite{E2013}), we notice that our attention is blow-up behavior occuring in {\em finite} range of time $t$ (not $\tau$ !), and hence the restriction of invariant manifolds to a compact segment of invariant manifolds is sufficient to consider the concrete behavior of solutions.

We shall pay attention to NHIMs for simplicity, in which case a well-known {\em vector field modification technique on the boundaries}  can be applied to constructing 
inflowing/overflowing invariant manifolds (see Definition \ref{dfn_in_overflow} and comments below).
Its direct application provides invariant foliations, yielding the existence of blow-up solutions in our problems.
As provided below, we can easily construct a modification to obtain both compact inflowing/overflowing invariant manifolds depending on our requirements.
We shall only show a modification for constructing {\em inflowing} invariant manifolds.
Modification for constructing overflowing invariant manifolds is similar\footnote{
Overflowing invariant manifolds on the horizon are applied to detecting blow-up behavior in the {\em reverse} time direction.
}.
\par
\bigskip
For any set $M\subset \widetilde{D}\equiv \mathcal{D}\cup \mathcal{E}$, $\bar t\in \mathbb{R}$ and an interval $I\subset \mathbb{R}$, let
\begin{equation}
\label{M_slice}
M_{\bar t} \equiv M\cap \{t = \bar t\},\quad M_I \equiv M\cap \{t\in I\}
\end{equation}
be the slice and the tube of $M$ on $I$, respectively.
First observe that, when $M \subset \mathcal{E}$ is a compact invariant manifold for $g$ admitting the normally hyperbolic structure, the slice $M_{\bar t}$ is a compact NHIM in $\mathcal{E} \cap \{t = \bar t\}$ because the $dt/d\tau$ always vanishes on $\mathcal{E}$ and hence $t$-evolution plays a role in (non-essential) {\em tangential} evolution for $g$.
Here we choose a collection of compact intervals $I, I'$ satisfying 
\begin{equation}
\label{tuple-intervals}
t_0 \in I \subset {\rm int}\,I' \subset I',
\end{equation}
which are used to modify manifolds over a small interval in $I'\setminus I$ so that dynamics over $I$ is unchanged.

\begin{ass}
\label{ass-nonaut-inv}
Fix an initial time $t_0\in \mathbb{R}$.
There is a collection of compact intervals $I, I'$ satisfying (\ref{tuple-intervals}) and an invariant manifold $M\subset \mathcal{E}$, not necessarily $\partial M \not = \emptyset$, for $g$ in (\ref{desing-para-nonaut}) satisfying the following assertions:
\begin{itemize}
\item $M_{I'}$ admits the normally hyperbolic structure for $\varphi_g$, in particular, there is a continuous splitting
\begin{equation*}
T_{M_{I'}}\mathbb{R}^{n+1} = TM_{I'} \oplus E^u(M_{I'}) \oplus E^s(M_{I'})
\end{equation*}
and, for some constants $C\geq 1$ and $\mu < 0 < \lambda$,
\begin{equation*}
m(D\varphi_g|_{E^u(M_{I'})}) \geq \frac{e^{\lambda t}}{C}\| D\varphi_g|_{TM_{I'}} \|^i,\quad  \| D\varphi_g|_{E^s(M_{I'})} \| \leq Ce^{\mu t} m(D\varphi_g|_{TM_{I'}})^i
\end{equation*}
hold for all $t\geq 0$ and $0 \leq i \leq r$.
\item For any $\bar t\in I'$, the slice $M_{\bar t}$ is nonempty, compact, connected and boundaryless.
In particular, such a slice $M_{\bar t}$ is a NHIM.
\end{itemize}
\end{ass}

\begin{rem}
\label{rem-t0-epsilon}
Without the loss of generality, we may assume that $t_0\in I$ in the above assumption.
Indeed, our interest is asymptotic behavior of solutions ${\bf u}(\tau) = (t(\tau), {\bf x}(\tau))$ for $g$ approaching to $\mathcal{E}$ as $\tau \to \infty$.
If $t_0 = t(\tau_0)\not \in I$, evolve the solution until $\tau_0 + \tilde \tau$ so that $t(\tau_0 + \bar \tau) \in I$ and replacing $t_0$ by $t(\tau_0 + \bar \tau)$, which is the case of our assumption.
\end{rem}


In Assumption \ref{ass-nonaut-inv}, we have a compact (normally hyperbolic) invariant manifold $M_{I'}\subset \mathcal{E}$ with the (nontrivial) invariant boundary.
{\em Modification of vector fields on the boundary} (cf. \cite{W2013}) is well used to guarantee persistence and/or invariant foliations of invariant manifolds.
In our case, let $\rho_{I, I'}: \mathbb{R}\to \mathbb{R}$ be a smooth, nonnegative bump-type function such that
\begin{equation*}
\rho_{I, I'}(t) = \begin{cases}
1 & t\not \in I', \\
0 & t\in I.
\end{cases}
\end{equation*}
We then modify the desingularized vector field (\ref{desing-para-nonaut}) to the following:
\begin{equation}
\label{desing-para-nonaut-mod}
\frac{d}{d\tau}\begin{pmatrix}
t \\ {\bf x}
\end{pmatrix} = \tilde g(t,{\bf x}) \equiv g(t,{\bf x}) + \delta \rho_{I,I'}(t) \begin{pmatrix}
\nu_{I'} \\ 0
\end{pmatrix},
\end{equation}
where $\delta > 0$, and $\nu_{I'}$ is the {\em inward} unit normal vector\footnote{
For constructing overflowing invariant manifolds, modify $\nu_{I'}$ to be the {\em outward} unit normal vector, which ensures the invariant foliation of unstable manifolds (\cite{W2013}).
} of $\partial I'$.
Then we know that $\tilde g = g$ on $M_{I}$ and that trajectories on $\mathcal{E}$ {\em approach to $M_I$} in $t$-direction. 
In other words, for any interval $\tilde I$ satisfying $I \subset {\rm int}\tilde I\subset I'$, the tube $M_{\tilde I}$ is inflowing invariant for the modified vector field $\tilde g$.
Therefore, according to the general theory of NHIMs, the inflowing invariant manifold $M_{\tilde I}$ for $\tilde g$ admits the local stable manifold $W^s_{\rm loc}(M_{\tilde I}; \tilde g)$ and its invariant foliation (e.g., \cite{W2013}) such that the restriction of the base space of $W^s_{\rm loc}(M_{\tilde I}; \tilde g)$ to $M_I$ provides the invariant foliation of $W^s_{\rm loc}(M_{I}; g)$; for {\em the original vector field} $g$ in $\{t\in I\}$.
As a result, blow-up solutions for (\ref{ODE-non-autonomous-extended}) stated in Theorem \ref{thm-blowup-nonaut-fund} can be described by means of $M_I$.

\begin{cor}[Nonautonomous blow-up: the existence by means of invariant manifolds]
\label{cor-nonaut-blowup-1}
Fix an initial time $t_0 \in \mathbb{R}$.
Suppose that $g$ admits an invariant manifold $M \subset \mathcal{E}$ satisfying all requirements in Assumption \ref{ass-nonaut-inv} with compact intervals $I, I'\subset \mathbb{R}$.
Also assume that the solution ${\bf y}(t)$ of (\ref{ODE-non-autonomous-extended}) with a bounded initial point ${\bf y}_0 (= {\bf y}(t_0)) \in \mathbb{R}^n$ whose image ${\bf x}(\tau) = T_{{\rm para};\alpha}({\bf y}(\tau))$ for $T_{{\rm para};\alpha}$ is on $W_{\rm loc}^s(M_I; g)$, where $\tau_0$ is the corresponding time of $t_0$ for the time-scale desingularization.
Then $t_{\max} < \infty$ and (\ref{asymptotic-main-nonaut}) holds with a trajectory $\gamma \subset M_I$.
\end{cor}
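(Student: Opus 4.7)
The plan is to combine the modification construction outlined just before the statement with the general description in Theorem \ref{thm-blowup-nonaut-fund}. First I would verify the basic geometric setup: on $\mathcal{E}$ we have $dt/d\tau = 0$, so each slice $M_{\bar t}$ sits inside $\mathcal{E}\cap\{t=\bar t\}$ and in the $\tau$-timescale the $t$-coordinate behaves like a tangential variable for the dynamics of $g$ restricted to any $M_{I'}$ slice. In the interior $\widetilde{\mathcal D}\setminus\mathcal{E}$, on the other hand, $dt/d\tau = (1-p({\bf x})^{2c})^k q({\bf x}) > 0$, so $t(\tau)$ is strictly increasing along any trajectory of $g$ approaching $\mathcal{E}$ and converges to a finite limit $t^\ast$ as $\tau\to\infty$.

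Next I would invoke the modification technique given in (\ref{desing-para-nonaut-mod}): pick an auxiliary interval $\tilde I$ with $I\subset \mathrm{int}\,\tilde I\subset I'$, take the bump function $\rho_{I,I'}$, and pass from $g$ to $\tilde g$. Because the additional term is supported outside $I$ and points inward along $\partial \tilde I$, the tube $M_{\tilde I}$ is compact, boundaryless in its intrinsic sense inside $\mathcal E$, and \emph{inflowing} invariant for $\tilde g$; moreover the normally hyperbolic splitting inherited from Assumption \ref{ass-nonaut-inv} on $M_{I'}$ restricts to $M_{\tilde I}$ so that $M_{\tilde I}$ is an inflowing NHIM for $\tilde g$. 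The NHIM theory for inflowing invariant manifolds (cf.\ the references after Definition \ref{dfn_in_overflow}) then produces the local stable manifold $W^s_{\rm loc}(M_{\tilde I};\tilde g)$ together with its invariant fibration by disks $W^s_{\rm loc}(p;\tilde g)$ with $p\in M_{\tilde I}$, with the exponential decay estimate (\ref{estimate-NHIM-exp}) along each fiber. Restricting the base of this fibration to $M_I$ gives precisely $W^s_{\rm loc}(M_I;g)$, which is the object appearing in the statement.

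Now I would use the identity $\tilde g\equiv g$ on $\{t\in I\}$ to transfer the conclusion back to the original vector field. By hypothesis ${\bf x}(\tau_0)\in W^s_{\rm loc}(M_I;g)$, hence there is a unique fiber $W^s_{\rm loc}({\bf x}_\gamma(\tau_0);\tilde g)$ containing it, with ${\bf x}_\gamma(\tau_0)\in M_I$. By asymptotic phase along the fibration,
\[
\|{\bf x}(\tau)-{\bf x}_\gamma(\tau)\|\le C e^{\mu(\tau-\tau_0)}\to 0 \quad(\tau\to\infty),
\]
where ${\bf x}_\gamma(\tau)$ is the $\tilde g$-trajectory through ${\bf x}_\gamma(\tau_0)$. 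Since $M_I$ is invariant and compact, ${\bf x}_\gamma(\tau)$ stays in $M_I$, so its $t$-coordinate remains in $I$; combined with the monotonicity and boundedness of $t(\tau)$ discussed above and the shadowing estimate, $t(\tau)\in I$ for all $\tau\ge\tau_0$, which means ${\bf x}(\tau)$ evolves in the region where $\tilde g$ and $g$ agree, so the shadowed trajectory $\gamma\subset M_I$ is a trajectory of the \emph{original} $g$.

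Finally, precompactness of $\gamma$ in $\mathcal E$ follows from $\gamma\subset M_I$ and compactness of $M_I$. Therefore all hypotheses of Theorem \ref{thm-blowup-nonaut-fund} are satisfied with this $\gamma$ and the convergence (\ref{asymptotic-main-nonaut}), yielding $t_{\max}<\infty$ as required. The main obstacle I expect is the bookkeeping in the previous paragraph: one must argue that the solution never leaves $\{t\in I\}$ so that the modification does not pollute the dynamics we care about, and that the fibration produced by the inflowing NHIM theorem for $\tilde g$ restricts cleanly to an invariant foliation of $W^s_{\rm loc}(M_I;g)$ for the unmodified vector field $g$. The remaining ingredients are routine applications of Assumption \ref{ass-nonaut-inv}, Remark \ref{rem-t0-epsilon}, and the already-proved Theorem \ref{thm-blowup-nonaut-fund}.
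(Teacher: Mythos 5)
Your argument follows essentially the same route as the paper, which gives no separate proof of this corollary but justifies it by exactly the construction you describe: the bump-function modification (\ref{desing-para-nonaut-mod}) making $M_{\tilde I}$ a compact inflowing NHIM for $\tilde g$, whose stable foliation restricted over $M_I$ (where $\tilde g = g$) provides the fiber decay producing a shadowed trajectory $\gamma \subset M_I$, after which Theorem \ref{thm-blowup-nonaut-fund} applies since $\gamma$ is precompact on $\mathcal{E}$. Your extra bookkeeping that the solution stays in $\{t \in I\}$ (via monotonicity of $t(\tau)$ and convergence to a base point with $t$-coordinate in $I$) is a reasonable elaboration of what the paper asserts implicitly, and the conclusion is correct.
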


We leave the treatment of blow-up rates in Appendices \ref{section-SSspec} and \ref{section-technical_nonaut} because several additional techniques stemming from {\color{black}{\em spectral theory of linear differential systems} and {\em differential topology} for achieving specific convergence rates}.
Here we only provide a special case, which can be simply applied to various nonautonomous systems such as  examples in Section \ref{section-blowup-ex}.

\begin{cor}[Blow-up rates for nonautonomous blow-up: a special case]
\label{cor-nonaut-blowup}
In addition to settings in Assumption \ref{ass-nonaut-inv}, suppose that 
$f$ is $C^4$ and that
\begin{align}
\notag
&M_{I'} = \{ (t, {\bf x}_\ast(t)) \mid t\in I',\, {\bf x}_\ast(t) \text{ is an equilibrium satisfying (\ref{NH-spec-nonaut-stationary}) below}\\
\notag
	&\quad \quad \quad \quad {\color{black}\text{and admitting (\ref{asymptotic-main-nonaut})}} \}, \\
\label{NH-spec-nonaut-stationary}
&\sharp \{{\rm Spec}(Dg(t, {\bf x}_\ast(t))) \cap i\mathbb{R} \} = 1\quad \text{ for all }\quad t\in I'
\end{align}
where ${\rm Spec}(A)$ denotes the set of eigenvalues of a squared matrix $A$, 
that is, $M_{I'}$ is a curve of hyperbolic equilibria parameterized by $t$.
{\color{black}Finally, suppose that \lq\lq the Sternberg-Sell condition of order $1$" (Definition \ref{dfn-Sternberg-Sell}) is satisfied}. 
Then the asymptotic estimate (\ref{decay-precise-nonaut}) holds true with ${\bf x}_\gamma(\tau) \equiv {\bf x}_\ast(t)$ for each $t\in I$.
In particular, asymptotic behavior for blow-up solutions stated in Theorem \ref{thm-blowup-nonaut-fund} is obtained.
\end{cor}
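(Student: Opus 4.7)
The plan is to bootstrap from the qualitative asymptotic phase supplied by Corollary \ref{cor-nonaut-blowup-1} to the quantitative exponential rate (\ref{decay-precise-nonaut}) by linearizing around the curve $M_{I'}$ of hyperbolic equilibria, and then to extract the blow-up rates by direct citation of Theorem \ref{thm-blowup-nonaut-fund}.

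First, I would apply the modification technique of Section \ref{sec-arrangement-NHIM-nonaut} to convert $M_{I'}$ into an inflowing NHIM $M_{\tilde I}$ for a modified vector field $\tilde g$, producing a local stable manifold $W^s_{\rm loc}(M_{\tilde I}; \tilde g)$ together with its invariant stable foliation; restricting bases to $M_I$ yields the corresponding foliation for the original $g$ inside $\{t\in I\}$. By Corollary \ref{cor-nonaut-blowup-1}, the image ${\bf x}(\tau) = \pi_{\bf x} T_{{\rm para};\alpha}(t(\tau),{\bf y}(\tau))$ lies on $W_{\rm loc}^s(M_I; g)$ and is asymptotic (in the sense of (\ref{asymptotic-main-nonaut})) to some trajectory $\gamma\subset M_I$. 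Because $dt/d\tau \equiv 0$ on $\mathcal{E}$, the trajectories on $M_I$ are stationary in $t$, so $\gamma$ reduces to a single equilibrium $\{{\bf x}_\ast(t_\infty)\}$ for some limit time $t_\infty\in I$, and one verifies $t_\infty = t_{\max}$ from the proof of Theorem \ref{thm-blowup-nonaut-fund}.

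Second, I would upgrade this convergence to the sharp rate (\ref{decay-precise-nonaut}) via the spectral machinery of Appendix \ref{section-SSspec}. The hypothesis (\ref{NH-spec-nonaut-stationary}) singles out exactly one eigenvalue of $Dg(t,{\bf x}_\ast(t))$ on $i\mathbb{R}$; this eigenvalue must equal $0$, since $\partial_t$ is tangent to $M_{I'}$ and $dt/d\tau \equiv 0$ on $\mathcal{E}$ forces the invariant curve to correspond to a kernel direction. The remaining spectrum is genuinely hyperbolic with a well-defined leading stable exponent $-\lambda < 0$. The Sternberg-Sell condition of order $1$ (Definition \ref{dfn-Sternberg-Sell}) then provides the spectral gap needed to control the stable dynamics by the leading linear rate, yielding
\[
\|{\bf x}(\tau) - {\bf x}_\ast(t_\infty)\| = e^{-\lambda \tau}\bar H(\tau),\qquad \bar H(\tau)=o(e^{\epsilon\tau}),\ \bar H(\tau)^{-1}=o(e^{\epsilon\tau})\ \text{for all }\epsilon>0.
\]
Substituting this into the expansion (\ref{expansion-1mp}) of $1 - p({\bf x}(\tau))^{2c}$ about ${\bf x}_\ast(t_\infty)$ converts the displacement bound into the claimed bound on $1 - p({\bf x}(\tau))^{2c}$, which is exactly the form of (\ref{decay-precise-nonaut}).

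Third, having established (\ref{decay-precise-nonaut}), the asymptotics of $p({\bf y}(t))$ and of each component $y_i(t)$ as $t\to t_{\max}-0$ follow immediately from Theorem \ref{thm-blowup-nonaut-fund}, with the shadowed curve taken as the constant $x_{\gamma,i}(\tau)\equiv x_{\ast,i}(t_\infty)$. The main obstacle in this programme is the second paragraph: translating the Sternberg-Sell spectral assumption into the precise, two-sided rate (\ref{decay-precise-nonaut}) rather than a mere exponential upper bound. The delicate points are (i) checking that the quadratic-type expansion (\ref{expansion-1mp}) does not accidentally annihilate the leading stable direction (i.e., that the projection of the leading stable eigenvector onto the radial direction defining $p$ is nonzero, which is a transversality property of the horizon to $W^s_{\rm loc}$ at ${\bf x}_\ast$), and (ii) transferring the spectral analysis from the modified vector field $\tilde g$ to $g$ on the region $\{t\in I\}$, where $\tilde g = g$, so the stable foliation and its rate are genuinely those of the original system.
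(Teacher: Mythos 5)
Your skeleton (collapse the shadowed trajectory to a single equilibrium on the horizon, upgrade the asymptotic phase to a two-sided exponential rate, then quote Theorem \ref{thm-blowup-nonaut-fund}) is the route the paper takes, but the step you yourself flag as the ``main obstacle'' is precisely the content of the corollary, and your account of it is not right. The Sternberg--Sell condition of order $1$ is not a spectral-gap hypothesis that by itself ``controls the stable dynamics by the leading linear rate''; it is a non-resonance condition (Definition \ref{dfn-Sternberg-Sell}) whose role, via Sell's smooth-conjugacy theorem recalled in Appendix \ref{section-conjugacy}, is to produce a $C^1$ conjugacy between $g$ near the invariant manifold and its linearization (this is where the $C^4$ hypothesis enters). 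The two-sided estimate (\ref{decay-precise-nonaut}) is then obtained from the \emph{discreteness of the dichotomy spectrum} over the hull of the shadowed trajectory --- here the hull is the single equilibrium ${\bf x}_\ast(t)$, so Proposition \ref{prop-discrete-spec} applies --- combined with the exponential-dichotomy characterization of the spectral subbundles (Proposition \ref{prop-spec-bundle}) and a Taylor expansion of the conjugacy map; this chain is exactly Proposition \ref{prop-req-NHIM-blowup}, which the paper invokes through Corollary \ref{cor-nonaut-blowup-2}. Asserting the displacement estimate $\|{\bf x}(\tau)-{\bf x}_\ast(t_\infty)\| = e^{-\lambda\tau}\bar H(\tau)$ without this mechanism leaves the corollary unproved, since everything downstream (your third paragraph) is routine once that estimate is in hand.

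A second, related gap: the smooth linearization used above is stated for \emph{compact boundaryless} NHIMs, whereas $M_I$, $M_{\tilde I}$ and $M_{I'}$ are tubes over compact intervals and have nontrivial boundary. Your bump-function modification yields inflowing invariance and the stable foliation, but not the boundaryless setting required for the conjugacy; the paper closes this by Proposition \ref{prop-embed-MI} (the doubling construction of \cite{EKR2018}, Proposition \ref{prop-EKR-boundaryless}), embedding $M_I$ into a compact boundaryless NAIM $\hat N$ for a vector field that agrees with $g$ on $W^s(M_I)$, so that the spectral analysis and conjugacy performed on $\hat N$ genuinely describe trajectories of the original $g$ in $\{t\in I\}$. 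Your ``delicate point (ii)'' touches the $\tilde g$-versus-$g$ identification but not this boundaryless reduction. With these two ingredients supplied, your first and third paragraphs (identification of $\gamma$ with a single equilibrium, hence $t_{\max}=\bar t<\infty$, and the final appeal to Theorem \ref{thm-blowup-nonaut-fund}) coincide with the paper's argument.
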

\begin{proof}
See Sections \ref{section-technical_nonaut} and \ref{section-SSspec}.
\end{proof}



\section{Examples}
\label{section-blowup-ex}

Examples showing the applicability of our results are collected {\color{black}here}.
In addition to the applicability, we aim at seeing that {\em the present methodology unifies the approach to finite-time singularities in nonautonomous problems}.
The examples we shall show are highlighted as follows:
\begin{itemize}
\item Section \ref{section-Pain} addresses the first Painlev\'{e} equation, which the existence of singular solutions is well-known in the treatment of {\em regular singular points}.
\item Section \ref{section-KK} relates to a regularized system for exhibiting {\em singular shocks} in (non-)hyperbolic systems of conservation laws.
\item Section \ref{section-ss} addresses {\em self-similar profiles} of nonlinear diffusion equations with finite-time singularity.
\item Section \ref{section-MEMS} addresses {\em radially symmetric profiles} of a reaction-diffusion equation with negative power nonlinearity.
\end{itemize}
All examples were investigated individually in preceding studies, and independent techniques were required to unravel the comprehensive morphology of solutions.
When we pay our attention to blow-ups, the proposed methodology unravels all these features in a systematic way, indicating that a series of machineries involving embeddings of phase spaces as well as time-scale desingularizations can unify descriptions of finite-time singularities.

\begin{rem}
In all examples, invariant manifolds in our interest are collections of equilibria.
Therefore Corollary \ref{cor-nonaut-blowup} are mainly applied.
In particular, the Sternberg-Sell condition of order $1$ (Definition \ref{dfn-Sternberg-Sell}) is satisfied in all examples.
\end{rem}

\begin{rem}
As we shall see in Section \ref{section-Pain}, as well as commented in \cite{LMT2023}, arguments through global (parabolic-type or Poincar\'{e}-type, say \cite{Mat2018}) embeddings require lengthy and tedious calculations, although we do not suffer from any change of charts\footnote{
Advantages that quasi-parabolic embeddings are used as our central machinery are discussed in \cite{asym2, LMT2023}.
}.
This is the reason why we apply another embeddings, called {\em directional embeddings} briefly summarized in Appendix \ref{sec:dir}, to the remaining examples.
Note that all results in Sections \ref{section-blowup-NHIM} and \ref{section-blowup-nonaut} are essentially valid.
See e.g. \cite{Mat2018} for the correspondence of arguments.
\end{rem}

\subsection{The first Painlev\'{e} equation}
\label{section-Pain}

First we consider the first Painlev\'{e} equation 
\begin{equation}
\label{Pain1-original}
u'' = 6u^2 + t,\quad ' = \frac{d}{dt}
\end{equation}
from the viewpoint of blow-up description.
It is well-known that (\ref{Pain1-original}) possesses the following solution: for fixed $t_0\in \mathbb{R}$, 
\begin{equation}
\label{blow-up-Pain1}
u(t) \sim 6(t_{\max} - t)^{-2}\quad \text{ as }\quad t \to t_{\max}-0
\end{equation}
for some $t_{\max} = t_{\max}(t_0) > t_0$ and sufficiently large initial point $u_0 = u(t_0)$.
Although this asymptotic behavior can be derived via substitution of a formal (Frobenius-type) power series solution into the equation (e.g., \cite{KJH1997}), we shall derive the existence of such a solution through dynamics at infinity.
\par
Rewrite (\ref{Pain1-original}) as the system of the first order ODEs:
\begin{equation}
\label{Pain1}
\begin{cases}
\chi' = 1, & \\
u' = v, & \\
v' = 6u^2 + \chi, &
\end{cases}\quad ' = \frac{d}{dt}.
\end{equation}
We immediately have the following property.
\begin{lem}
The system (\ref{Pain1}) is asymptotically quasi-homogeneous of type $(0,2,3)$ and order $k+1 = 2$.
\end{lem}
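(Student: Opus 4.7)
My plan is to verify the definition of asymptotic quasi-homogeneity (Definition \ref{dfn-AQH}) directly, since the Painlev\'{e} system is polynomial and the scaling weights are small. Setting $\alpha = (\alpha_1, \alpha_2, \alpha_3) = (0, 2, 3)$, $k = 1$, and ${\bf x} = (\chi, u, v)^T$, I propose the candidate principal part
\begin{equation*}
f_{\alpha, k}({\bf x}) = (0,\ v,\ 6u^2)^T,
\end{equation*}
which is obtained by discarding the terms that do not balance under the scaling $s^{\Lambda_\alpha} {\bf x} = (\chi,\ s^2 u,\ s^3 v)^T$. The admissible set associated with $\alpha$ is $\mathbb{R} \times \tilde{U}$ for an open $\tilde{U} \subset \mathbb{R}$, because $I_\alpha = \{2, 3\}$; in particular $U = \mathbb{R}^3$ is admissible.

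First I would check that $f_{\alpha, k}$ is quasi-homogeneous of type $(0,2,3)$ and order $k+1 = 2$, i.e., that each component $f_{i;\alpha, k}$ has order $k + \alpha_i$. The first component is identically zero, which satisfies the identity trivially. The second gives $f_{2;\alpha,k}(s^{\Lambda_\alpha} {\bf x}) = s^3 v = s^{1+2} \cdot v$, as required. The third gives $f_{3;\alpha,k}(s^{\Lambda_\alpha} {\bf x}) = 6(s^2 u)^2 = s^4 \cdot 6 u^2 = s^{1+3} \cdot 6 u^2$, again as required.

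Next I would verify the asymptotic identity (\ref{residual}). Direct substitution yields, for $s \to +\infty$ and ${\bf x}$ ranging over any set of the form $\{u^2+v^2 = 1,\ \chi \in K\}$ with $K \subset \mathbb{R}$ compact,
\begin{align*}
f_1(s^{\Lambda_\alpha} {\bf x}) - s^{1} f_{1;\alpha,k}({\bf x}) &= 1 - 0 = 1 = o(s), \\
f_2(s^{\Lambda_\alpha} {\bf x}) - s^{3} f_{2;\alpha,k}({\bf x}) &= s^3 v - s^3 v = 0, \\
f_3(s^{\Lambda_\alpha} {\bf x}) - s^{4} f_{3;\alpha,k}({\bf x}) &= (6 s^4 u^2 + \chi) - 6 s^4 u^2 = \chi = o(s^4),
\end{align*}
the last estimate being uniform in $\chi$ over any compact subset $K \subset \mathbb{R}$. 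This is the required uniformity in Definition \ref{dfn-AQH}.

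I do not expect any genuine obstacle here; the only conceptual point is the choice of weights. The assignment $\alpha_1 = 0$ implements the convention (\ref{scaling-time}) that the time variable $\chi$ is not scaled, and consequently both the equation $\chi' = 1$ and the forcing term $\chi$ in $v' = 6u^2 + \chi$ are automatically swept into the residual. The balance $(\alpha_2, \alpha_3, k) = (2, 3, 1)$ is in fact forced: the relation $u' = v$ demands $\alpha_3 - \alpha_2 = k$, and $v' \sim 6 u^2$ demands $2 \alpha_2 - \alpha_3 = k$, whose minimal positive integer solution is exactly $(2, 3, 1)$.
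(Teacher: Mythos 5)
Your verification is correct and is essentially the paper's own argument: the paper states this lemma as immediate, meaning exactly the direct check of Definition \ref{dfn-AQH} with principal part $(0,\ v,\ 6u^2)^T$, the constant $1$ and the forcing term $\chi$ relegated to the residual, and uniformity on $\{u^2+v^2=1,\ \chi\in K\}$, all of which you carry out correctly (and your closing remark that the weights $(2,3)$ and $k=1$ are forced by the balances $\alpha_3-\alpha_2=k$, $2\alpha_2-\alpha_3=k$ is a correct, if optional, addition).
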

In the present example, we apply the quasi-parabolic embedding
\begin{equation*}
\chi = \chi,\quad u = \frac{x_1}{(1-p({\bf x})^{6})^2},\quad v = \frac{x_2}{(1-p({\bf x})^{6})^3}, 
\end{equation*}
where
\begin{equation*}
{\bf x} = (x_1, x_2)^T,\quad p({\bf x})^6 = x_1^6 + x_2^4
\end{equation*}
with $(\beta_1, \beta_2) = (3,2)$ and $c = 6$. 
Following the general derivation (Section \ref{section-preliminaries}), let
\begin{align*}
&\kappa := (1-p({\bf x})^{6})^{-1},\quad \tilde f_1(\chi, {\bf x}) := x_2,\quad \tilde f_2(\chi, {\bf x}) := 6x_1^2 + \kappa^{-4} \chi,\\
&G(\chi, {\bf x}) := \frac{x_1^5}{2} \tilde f_1(\chi, {\bf x}) + \frac{x_2^3}{3}\tilde f_2(\chi, {\bf x}) = \frac{x_1^5}{2}x_2 + \frac{x_2^3}{3}(6x_1^2 + \kappa^{-4} \chi).
\end{align*}
Under the time-scale desingularization
\begin{equation*}
\frac{d\tau}{dt} = \kappa^{-1},
\end{equation*}
we have the desingularized vector field (\ref{desing-para-nonaut}) with
$c=6$, $k=1$ and $\bar \Lambda_\alpha = {\rm diag}(0,2,3)$.
\par
The next step is to find invariant sets on the horizon $\mathcal{E} = \{p({\bf x})=1\}$.
On $\mathcal{E}$, equilibria satisfy $g_1(\chi, {\bf x}) = g_2(\chi, {\bf x}) = 0$, where
\begin{equation*}
g_1(\chi, {\bf x}) = x_2(1-x_1^6 - 4x_1^3 x_2^2),\quad g_2(\chi, {\bf x}) = \frac{3}{2}x_1^2 (4-x_1^3x_2^2 - 4x_2^4)
\end{equation*}
are the second and the third components of the desingularized vector field, respectively.
Note that $\dot \chi = 0$ is automatically satisfied on $\mathcal{E}$.
We find, for each $\chi \in \mathbb{R}$, the points $(\chi, {\bf x}_\ast)$, where 
\begin{align*}
&{\bf x}_\ast = (x_{1; +}, x_{2; +}),\quad (x_{1; +}, x_{2; -}),\quad (x_{1; -}, x_{2; +}),\, \text{ or }\, (x_{1; -}, x_{2; -}),\\
&x_{1; \pm} = \pm \frac{1}{17^{1/6}},\quad x_{2; \pm} = \pm \frac{2}{17^{1/4}}
\end{align*}
are equilibria on $\mathcal{E}$.
\par
Now we pay attention to the family of equilibria $M_{++} = \{(\chi, x_{1; +}, x_{2; +})\}$.
The Jacobian matrix of (\ref{desing-para-nonaut}) at each point on $M_{++}$ is
\begin{equation*}
\begin{pmatrix}
0 & \ast \\
O & \tilde A
\end{pmatrix},\quad \tilde A = 
\begin{pmatrix}
\frac{11}{2}x_1^5 \tilde f_1|_{\mathcal{E}} + \left.\frac{\partial \tilde f_1}{\partial x_1}\right|_{\mathcal{E}} - 2\left. \left( G + x_1 \frac{\partial G}{\partial x_1}\right) \right|_{\mathcal{E}} 
& \frac{11}{3}x_2^3 \tilde f_1|_{\mathcal{E}} + \left.\frac{\partial \tilde f_1}{\partial x_2}\right|_{\mathcal{E}} - \left. 2 x_1 \frac{\partial G}{\partial x_2} \right|_{\mathcal{E}} \\
\frac{11}{2}x_1^5 \tilde f_2|_{\mathcal{E}}  + \left.\frac{\partial \tilde f_2}{\partial x_1}\right|_{\mathcal{E}} - \left. 3x_2 \frac{\partial G}{\partial x_1} \right|_{\mathcal{E}} 
& \frac{11}{3}x_2^3 \tilde f_2|_{\mathcal{E}}  + \left.\frac{\partial \tilde f_2}{\partial x_2}\right|_{\mathcal{E}} - 3\left. \left( G + x_2 \frac{\partial G}{\partial x_2}\right) \right|_{\mathcal{E}} 
\end{pmatrix},
\end{equation*}
where
\begin{align*}
&\tilde f_1|_{\mathcal{E}} = x_2|_{\mathcal{E}},\quad \tilde f_2|_{\mathcal{E}} = 6x_1^2|_{\mathcal{E}},\quad
\left.\frac{\partial \tilde f_1}{\partial x_1}\right|_{\mathcal{E}}  = 0,\quad \left.\frac{\partial \tilde f_1}{\partial x_2}\right|_{\mathcal{E}} = 1,\quad \left.\frac{\partial \tilde f_2}{\partial x_1}\right|_{\mathcal{E}} = 12x_1,\quad \left.\frac{\partial \tilde f_2}{\partial x_2}\right|_{\mathcal{E}} = 0,\\
&G|_{\mathcal{E}} = \frac{x_1^5}{2} \tilde f_1|_{\mathcal{E}} + \frac{x_2^3}{3}\tilde f_2|_{\mathcal{E}} = \left.\left(\frac{1}{2}x_1^5 x_2 + 2x_1^2 x_2^3\right)\right|_{\mathcal{E}},\\
&\left. \frac{\partial G}{\partial x_1} \right|_{\mathcal{E}} = \frac{5}{2}x_1^4 \tilde f_1|_{\mathcal{E}}  + \frac{x_1^5}{2}\left. \frac{\partial \tilde f_1}{\partial x_1} \right|_{\mathcal{E}} + \frac{x_2^3}{3}\left. \frac{\partial \tilde f_2}{\partial x_1} \right|_{\mathcal{E}} = \left.\left(\frac{5}{2}x_1^4 x_2 + 4x_1 x_2^3\right)\right|_{\mathcal{E}},\\
& \left. \frac{\partial G}{\partial x_2} \right|_{\mathcal{E}} = \frac{x_1^5}{2}\left. \frac{\partial \tilde f_1}{\partial x_2} \right|_{\mathcal{E}} +x_2^2 \tilde f_2|_{\mathcal{E}} + \frac{x_2^3}{3}\left. \frac{\partial \tilde f_2}{\partial x_2} \right|_{\mathcal{E}} = \left.\left(\frac{1}{2}x_1^5 + 6x_1^2 x_2^2\right)\right|_{\mathcal{E}},
\end{align*}
and several terms being identically zero on $\mathcal{E}$ are omitted.
Note that the above matrix is independent of $\chi$, namely the flow and its linearization are autonomous on $\mathcal{E}$.
Substituting $(x_1, x_2) = (x_{1; +}, x_{2; +})$ into the above matrix, we have
\begin{align*}
\frac{11}{2}x_1^5 \tilde f_1|_{\mathcal{E}} + \left.\frac{\partial \tilde f_1}{\partial x_1}\right|_{\mathcal{E}} - 2\left. \left( G + x_1 \frac{\partial G}{\partial x_1}\right) \right|_{\mathcal{E}} 
%
	 &= \frac{11}{17^{13/12}} - \frac{108}{17^{13/12}}  = - \frac{97}{17^{13/12}},\\
\frac{11}{3}x_2^3 \tilde f_1|_{\mathcal{E}} + \left.\frac{\partial \tilde f_1}{\partial x_2}\right|_{\mathcal{E}} - \left. 2 x_1 \frac{\partial G}{\partial x_2} \right|_{\mathcal{E}} 
	&= \frac{11\cdot 16}{51} + 1 - \frac{49}{17} 
	= \frac{80}{51},\\
\frac{11}{2}x_1^5 \tilde f_2|_{\mathcal{E}} + \left.\frac{\partial \tilde f_2}{\partial x_1}\right|_{\mathcal{E}} - \left. 3x_2 \frac{\partial G}{\partial x_1} \right|_{\mathcal{E}} 
	&= \frac{33}{17^{7/6}} + \frac{12\cdot 17}{17^{7/6}}  - \frac{6}{17^{7/6}} \left( 5 + 32 \right) 
	= \frac{15}{17^{7/6}},
\end{align*}
\begin{align*}
&\frac{11}{3}x_2^3 \tilde f_2|_{\mathcal{E}} + \left.\frac{\partial \tilde f_2}{\partial x_2}\right|_{\mathcal{E}} - 3\left. \left( G + x_2 \frac{\partial G}{\partial x_2}\right) \right|_{\mathcal{E}} 
%
	= \frac{176}{17^{13/12}} - \frac{3}{17^{13/12}} \left(1 + 16 + 1 + 48\right)  = -\frac{22}{17^{13/12}}
\end{align*}
and hence the matrix $\tilde A$ is
\begin{equation*}
\tilde A = 
\begin{pmatrix}
- 97 / 17^{13/12} & 80 / 51 \\
15 / 17^{7/6} & - 22 / 17^{13/12}
\end{pmatrix}
\equiv \begin{pmatrix}
a & b \\
c & d
\end{pmatrix}.
\end{equation*}
%
Now
\begin{align*}
&a+d = -\frac{119}{17^{13/12}},\quad (a+d)^2 = \frac{119^2}{17^{13/6}},\quad 
ad-bc 
= \frac{97\cdot 22 - 400}{17^{13/6}},\\
&(a+d)^2 - 4(ad-bc) = \frac{119^2 - 4(97\cdot 22 - 400)}{17^{13/6}} 
= \frac{4025}{17^{13/6}}
\end{align*}
and hence the eigenvalues of $\tilde A$ are
\begin{equation*}
\lambda = \frac{1}{2} \left\{ (a+d) \pm \sqrt{ (a+d)^2 -4(ad-bc)} \right\} = \frac{-119 \pm \sqrt{7225}}{2\cdot 17^{13/12}} = \frac{-119 \pm 85}{2\cdot 17^{13/12}} = \frac{-1}{17^{1/12}},\quad \frac{-6}{17^{1/12}},
\end{equation*}
indicating that $(x_{1; +}, x_{2; +})$ is attracting (sink-type), and hence the invariant manifold $M_{++}$ is normally attracting.
In particular, for any compact interval $I\subset \mathbb{R}$, the restriction $M_{++;I} \equiv M_{++}\cap \{t\in I\}$ is a NAIM satisfying all requirement in Corollary \ref{cor-nonaut-blowup}.
\par
We therefore see that, for particular choice of an interval $I \subset \mathbb{R}$, solutions ${\bf x}(\tau) \subset W^s_{\rm loc}(M_{++;I})$ characterize blow-up solutions of (\ref{Pain1}) with the asymptotic behavior $O((t_{\max} - t)^{-2})$ as $t\to t_{\max}-0$.
This result confirms and guarantees of the existence of blow-up solutions of the form (\ref{blow-up-Pain1}) {\em without any algebraic trial-and-error arguments}.

\begin{thm}[cf. \cite{KJH1997}]
Consider (\ref{Pain1-original}) with the initial time $t_0\in \mathbb{R}$ and $u(t_0) = u_0\in \mathbb{R}$.
If $u_0$ is sufficiently large, the solution $u(t) = u(t;t_0, u_0)$ with $u(t_0) = u_0$ blows up at a time $t_{\max} = t_{\max}(t_0) < \infty$ with
\begin{equation*}
u(t) \sim c_u(t_{\max} - t)^{-2} \quad \text{ as }\quad t\to t_{\max}-0
\end{equation*}
for some constant $c_u>0$.
\end{thm}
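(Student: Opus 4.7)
The plan is to invoke Corollary \ref{cor-nonaut-blowup-1} together with Corollary \ref{cor-nonaut-blowup} applied to the normally attracting family $M_{++}$ already constructed in this section. The preceding analysis has supplied the key ingredients: the extended system (\ref{Pain1}) is asymptotically quasi-homogeneous of type $(0,2,3)$ and order $k+1=2$; each slice $M_{++;\bar\chi} = \{(\bar\chi, x_{1;+}, x_{2;+})\}$ is a single hyperbolic equilibrium of the desingularized vector field with normal-direction spectrum $\{-1/17^{1/12}, -6/17^{1/12}\}$; and the $\chi$-direction contributes the (tangential) zero eigenvalue. Thus $M_{++}$ is a boundaryless NAIM for $g$, with trivial unstable bundle $E^u = \{0\}$.

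First I would fix $t_0 \in \mathbb{R}$ and choose compact intervals with $t_0 \in I \subset {\rm int}\, I' \subset I'$. Each slice $M_{++;\bar t}$ being a single point is trivially nonempty, compact, connected and boundaryless, so Assumption \ref{ass-nonaut-inv} is satisfied with the spectral splitting above. Applying the boundary modification of Section \ref{sec-arrangement-NHIM-nonaut} to $M_{++;I'}$ produces an inflowing NAIM for the modified vector field $\tilde g$, whose local stable foliation restricted to $\{t\in I\}$ coincides with that of the original $g$. Because $E^u = \{0\}$, the base of the foliation is codimension-zero, so $W^s_{\rm loc}(M_{++;I}; g)$ is in fact a full-dimensional open neighborhood of $M_{++;I}$ in $\widetilde{\mathcal{D}}$.

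Second I would translate this back through the quasi-parabolic embedding. Since $T_{{\rm para};\alpha}$ maps points with large $|u_0|$ close to the horizon $\mathcal{E}$, and the attracting equilibrium sits at positive coordinates $(x_{1;+}, x_{2;+})$ on $\mathcal{E}$, taking $u_0$ sufficiently large and $v_0$ in an appropriate open sector (corresponding to the preimage of the above neighborhood) places $T_{{\rm para};\alpha}(t_0, u_0, v_0) \in W^s_{\rm loc}(M_{++;I}; g)$. Corollary \ref{cor-nonaut-blowup-1} then yields $t_{\max} < \infty$, together with convergence of $\pi_{\bf x}T_{{\rm para};\alpha}(t(\tau), u(\tau), v(\tau))$ to the fixed point $(x_{1;+}, x_{2;+})$ as $\tau \to \infty$. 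Since the shadowed trajectory is an equilibrium with hyperbolic normal spectrum, the Sternberg--Sell condition of order $1$ holds (cf. the remark opening Section \ref{section-blowup-ex}), so Corollary \ref{cor-nonaut-blowup} supplies the asymptotic
\begin{equation*}
u(t) \sim C_0^{\alpha_1}\, x_{1;+}\, (t_{\max}-t)^{-\alpha_1/k} = c_u (t_{\max}-t)^{-2}
\end{equation*}
with $\alpha_1=2$, $k=1$ and a positive constant $c_u$, which is the required conclusion.

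The main obstacle is to make precise the vague phrase \emph{``if $u_0$ is sufficiently large''} in the original coordinates. The openness of $W^s_{\rm loc}(M_{++;I}; g)$ in embedded coordinates yields, via the diffeomorphism property of $T_{{\rm para};\alpha}$, only the qualitative statement that a nonempty open set of initial data $(u_0, v_0)$ with $u_0$ arbitrarily large produces blow-up. Sharpening this to an explicit threshold on $u_0$ (and a concrete admissible range of $v_0$) would require estimating the size of the neighborhood over which the stable foliation persists and pulling that neighborhood back through the embedding. For the theorem as stated, however, the qualitative pullback is enough.
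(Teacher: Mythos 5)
Your proposal is correct and follows essentially the same route as the paper: the paper's proof of this theorem is exactly the computation carried out earlier in Section \ref{section-Pain} (quasi-homogeneity of type $(0,2,3)$ and order $2$, the equilibria on the horizon, the Jacobian with eigenvalues $-1/17^{1/12}$ and $-6/17^{1/12}$ showing $M_{++}$ is normally attracting), followed by restricting to $M_{++;I}$ and invoking Corollaries \ref{cor-nonaut-blowup-1} and \ref{cor-nonaut-blowup}, which is precisely what you do. Your additional remarks on the boundary modification, the Sternberg--Sell condition and the translation of \lq\lq $u_0$ sufficiently large'' through the embedding only make explicit steps the paper leaves implicit, at the same level of rigor.
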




\subsection{A problem in Dafermos profiles for singular shocks}
\label{section-KK}
The next example is the following system with $0 \leq \epsilon \ll 1$ as a parameter controlling the multi-scale structure:
\begin{equation}
\label{KK}
\begin{cases}
\chi' = \epsilon, & \\
u_1' = u_1^2 - u_2 - \chi u_1 - w_1, & \\
u_2' = \frac{1}{3}u_1^3 - u_1 - \chi u_2 - w_2, & \\
w_1' = -\epsilon u_1, & \\
w_2' = -\epsilon u_2, & \\
\end{cases}\quad ' = \frac{d}{dt}.
\end{equation}

\begin{rem}
\label{rem-sing-shock-mech}
The origin of (\ref{KK}) is the system of conservation laws
\begin{equation}
\label{KK-PDE}
(u_1)_t + \left(u_1^2 - u_2\right)_x = 0,\quad (u_2)_t + \left(\frac{1}{3}u_1^3 - u_1\right)_x = 0\quad \Leftrightarrow \quad U_t + F(U)_x = 0,
\end{equation}
proposed by Keyfitz and Kranser (e.g., \cite{KK1995, KK1990, K2011}. See also \cite{SSS1993, SS2004}) with identification
\begin{equation*}
U(t,x) = (u_1(t,x),u_2(t,x))^T,\quad F(U) = \left(u_1^2 - u_2, \frac{1}{3}u_1^3 - u_1\right)^T.
\end{equation*}
Applying the {\em Dafermos regularization} (e.g., \cite{S2004}) to (\ref{KK-PDE}), we have
\begin{equation}
\label{KK-PDE-Dafermos}
U_t + F(U)_x = \epsilon t U_{xx},
\end{equation}
and applying the ansatz of self-similar profiles, called the {\em Dafermos profiles}: $U(t,x) = U^{\epsilon}(x/t)$, we obtain the extended autonomous system
\begin{equation*}
\chi' = \epsilon,\quad U' = V,\quad V' = \{ DF(U) - \chi I\}V, \quad ' = \frac{d}{dt},
\end{equation*}
where $U=(u_1, u_2)^T$ and $V = (v_1, v_2)^T$.
An alternative form is derived through the change of variables (e.g., \cite{S2004})
\begin{equation*}
(U,V,\chi) \mapsto (U,W,\chi) = (U, F(U) - \chi U - V, \chi),
\end{equation*}
which is exactly (\ref{KK}), where $W = (w_1, w_2)^T$.
In \cite{S2004}, this system is considered to construct the Dafermos profile of {\em singular shocks}, namely shock fronts with Dirac's delta-type singularity on the front.
A novel observation in singular shocks is that these shocks can be constructed as the singular limit of singularly perturbed solutions admitting arbitrarily large magnitudes.
The corresponding trajectories are constructed by the {\em connecting orbits of equilibria at infinity} under technical change of coordinates.
The present example as well as preceding works by the author (e.g., \cite{Mat2018}) is inspired by reconstructing such singular objects within standard and unified arguments of dynamics at infinity.
\end{rem}

Based on arguments in preceding works, we have the following observation.

\begin{lem}[cf. \cite{Mat2018, S2004}]
The system (\ref{KK}) is asymptotically quasi-homogeneous of type $(0,1,2,1,2)$ and order $k+1 = 2$.
\end{lem}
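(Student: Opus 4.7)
The plan is to verify Definition 3.3 directly for the extended vector field with phase variable $(\chi, u_1, u_2, w_1, w_2)$ and exponent vector $\alpha = (0,1,2,1,2)$. The associated set of homogeneity indices is $I_\alpha = \{1,2,3,4\}$, and $\chi$ plays the role of the \emph{unscaled} coordinate allowed by the admissibility of the domain $\mathbb{R}\times\mathbb{R}^4$. Writing $k=1$, I need to display, for each component $f_i$, a candidate quasi-homogeneous term $f_{i;\alpha,k}$ of order $k+\alpha_i$ such that the difference $f_i(s^{\Lambda_\alpha}\mathbf{x}) - s^{k+\alpha_i} f_{i;\alpha,k}(\mathbf{x})$ is $o(s^{k+\alpha_i})$ as $s\to +\infty$.

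The verification is a direct substitution, component-by-component, using the scaling $(\chi, u_1, u_2, w_1, w_2) \mapsto (\chi, s u_1, s^2 u_2, s w_1, s^2 w_2)$. For $\chi'=\epsilon$ (required order $1$), the constant $\epsilon$ is itself $o(s)$, so $f_{0;\alpha,k}=0$. For $u_1' = u_1^2 - u_2 - \chi u_1 - w_1$ (required order $2$), one reads off $u_1^2$ and $-u_2$ as quasi-homogeneous of order $2$ while $-\chi u_1$ and $-w_1$ each scale like $s$, hence are $o(s^2)$, giving $f_{1;\alpha,k}(\mathbf{x}) = u_1^2 - u_2$. For $u_2' = \frac{1}{3}u_1^3 - u_1 - \chi u_2 - w_2$ (required order $3$), only $\frac{1}{3}u_1^3$ is of order $3$, while $-u_1$, $-\chi u_2$, $-w_2$ scale as $s$, $s^2$, $s^2$ respectively, so $f_{2;\alpha,k}(\mathbf{x}) = \tfrac13 u_1^3$ and the remainder is $O(s^2) = o(s^3)$. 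For $w_1' = -\epsilon u_1$ (required order $2$) and $w_2' = -\epsilon u_2$ (required order $3$), the right-hand sides scale as $s$ and $s^2$ respectively, so in both cases the quasi-homogeneous part is $0$ and the whole right-hand side is the residual.

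The one point that needs a brief sanity check is the uniformity clause in (\ref{residual}): one must confirm the residuals are $o(s^{k+\alpha_i})$ uniformly on $\{\sum_{i\in I_\alpha} x_i^2 = 1\}$ with $\chi$ ranging over a compact subset of $\mathbb{R}$. Since every residual term is a polynomial in $\chi$ with coefficients that are polynomials in the scaled coordinates $(u_1, u_2, w_1, w_2)$, and these are bounded on the unit sphere of the scaled coordinates, the $O$-estimates above are uniform over any compact $\chi$-interval. There is no real obstacle: the lemma is a pure verification, and the only mild care is the separation of the unscaled coordinate $\chi$ from the scaled ones. Collecting the five cases above completes the proof and exhibits the principal part $f_{\alpha,k} = (0,\, u_1^2 - u_2,\, \tfrac13 u_1^3,\, 0,\, 0)^T$.
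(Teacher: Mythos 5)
Your verification is correct: the component-wise scaling check with principal part $(0,\,u_1^2-u_2,\,\tfrac{1}{3}u_1^3,\,0,\,0)^T$, the treatment of $\chi$ as the unscaled coordinate, and the uniformity remark are exactly the routine computation the paper leaves implicit (it states the lemma as an observation citing earlier work, where the same direct substitution is the intended argument). No gaps; this matches the paper's approach in substance.
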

We apply the directional embedding $T_{d;\alpha}: (\chi, u_1, u_2, w_1, w_2)\mapsto (\chi, x_1, s, x_3, x_4)$ defined by
\begin{equation}
\label{dir-cpt-KK}
\chi = \chi, \quad u_1 = \frac{x_1}{s},\quad u_2 = \frac{1}{s^2},\quad w_1 = \frac{x_3}{s},\quad w_2 = \frac{x_4}{s^2}.
\end{equation}

\begin{rem}
The present choice of the localizing direction, namely $u_2 = s^{-2}$, stems from the previous arguments for deriving blow-up behavior of dynamical systems with nonlinearity extracted from (\ref{KK-PDE}) (cf. \cite{LMT2023, Mat2018, MT2020_1}):
\begin{equation*}
u_1' = u_1^2 - u_2,\quad u_2' = \frac{1}{3}u_1^3 - u_1.
\end{equation*}
It is observed in the above references that the corresponding desingularized vector fields through global embeddings like quasi-parabolic ones admit totally four equilibria on the horizon, all of which correspond to blow-up solutions (in the positive or negative time direction) such that $v$ goes to \lq\lq positive" infinity.
See references mentioned for details.
This is the reason why we have chosen the embedding in the direction specified as (\ref{dir-cpt-KK}).
\end{rem}

Using the change of coordinates (\ref{dir-cpt-KK}) as well as the time-scale desingularization
\begin{equation*}
\frac{d\tau}{dt} = s^k = s,
\end{equation*}
we obtain the associated desingularized vector field:
\begin{equation}
\label{KK-desing}
\begin{cases}
\dot \chi = \epsilon s, & \\
\dot x_1 = x_1\tilde g_0  + (x_1^2 - 1) - s(\chi x_1 + x_3),\\
\dot s = s \tilde g_0, & \\
\dot x_3 = x_3\tilde g_0 -\epsilon x_1 s,\\
\dot x_4 = 2x_4\tilde g_0 -\epsilon s,\\
\end{cases}
\quad \dot {} = \frac{d}{d\tau},\quad \tilde g_0 = -\frac{1}{6}x_1^3 + \frac{x_1}{2}s^2 + \frac{s}{2}(\chi + x_4).
\end{equation}
We easily see that the horizon $\{s=0\}$ admits four families of equilibria
\begin{equation*}
(\chi, x_1, s, x_3, x_4) =  \left(\chi, \pm \sqrt{3\pm \sqrt{3}}, 0, 0, 0 \right).
\end{equation*}
Let 
\begin{equation}
\label{eq-KK-original}
p_{1, \pm} := \pm \sqrt{3+\sqrt{3}}, \quad  p_{2, \pm} := \pm \sqrt{3-\sqrt{3}}.
\end{equation}
The Jacobian matrices on the above family of equilibria are
\begin{equation*}
\begin{pmatrix}
0 & 0 & \epsilon & 0 & 0 \\
0 & -\frac{2}{3}p_{i, \pm}^3 + 2p_{i, \pm} & -\frac{1}{2}\chi p_{i, \pm} & 0 & 0\\
0 & 0 & -\frac{1}{6}p_{i, \pm}^3 & 0 & 0\\
0 & 0 & -\epsilon p_{i, \pm} & -\frac{1}{6}p_{i, \pm}^3 & 0\\
0 & 0 & -\epsilon & 0 & -\frac{1}{6}p_{i, \pm}^3  \\
\end{pmatrix}
\end{equation*}
For simplicity, we further assume that $\epsilon = 0$ for a while. 
Then, in the {\em singular fast limit system}, namely (\ref{KK-desing}) with $\epsilon = 0$, four families of equilibria are invariant with normally hyperbolic structure.  
Among four points (\ref{eq-KK-original}), $p_{1,+}$ and $p_{2,+}$ are those where the flow is attracting towards the horizon.
Moreover, $p_{1,+}$ is a saddle, which is our main issue\footnote{
According to the mechanism of singular shocks mentioned in Remark \ref{rem-sing-shock-mech}, {\em saddle}-like invariant sets would be considered to play a key role in constructing these shocks.
}.
From our construction and the above observation, the family
\begin{equation*}
M_{1,+} = \{(\chi, p_{1,+}, 0, 0, 0) \mid \chi \in \mathbb{R}\}
\end{equation*}
determines a (noncompact) NHIM.
Because $\chi$ is just a a parameter on the horizon $\{s = 0\}$, for any compact interval $I\subset \mathbb{R}$, the restriction
\begin{equation*}
M_{1,+;I} = \{(\chi, p_{1,+}, 0, 0, 0) \mid \chi \in I\}
\end{equation*}
becomes a compact NHIM (with boundary).
In particular, all requirements in Corollary \ref{cor-nonaut-blowup} are satisfied.
By the geometric singular perturbation theory (cf. \cite{F1979}) (with boundary modification), the above conclusion also holds for all $0\leq \epsilon \ll 1$.
\par
We therefore conclude that, for sufficiently large $I$ depending on the initial time $t_0$, the local stable manifold $W_{\rm loc}^s(M_{1,+;I})$ induce a blow-up solution.
More precisely, there is an equilibrium 
$p_\ast = (\chi_\ast, p_{1,+}, 0,0,0)\in M_{1,+;I}$ for some $\chi_\ast = \chi_\ast(t_0)$ such that the trajectory on $W_{\rm loc}^s(p_\ast)$ induces a blow-up solution with the asymptotic behavior
\begin{equation*}
u_1(t) = \frac{x_1(t)}{s(t)}\sim c(t_{\max} - t)^{-1}\quad \text{ as }\quad t\to t_{\max}-0
\end{equation*}
for some constant $c > 0$.
\begin{thm}[cf. \cite{KK1995, S2004}]
Consider (\ref{KK}) and a solution $(\chi, u_1, u_2, w_1, w_2)(t)$ with $0\leq \epsilon \ll 1$.
Let $g$ be the desingularized vector field (\ref{KK-desing}).
Assume that, under the directional embedding $T_{d;\alpha}$ given in (\ref{dir-cpt-KK}), $T_{d;\alpha}(\chi, u_1, u_2, w_1, w_2)(\bar \tau) \in W_{\rm loc}^s(p_\ast)$ holds for some $\bar \tau$.
Then $(\chi, u_1, u_2, w_1, w_2)(t)$ is a blow-up solution with
\begin{equation*}
u_1(t) \sim c_1(t_{\max}- t)^{-1},\quad u_2(t) \sim c_2(t_{\max}- t)^{-2} \quad \text{ as }\quad t\to t_{\max}-0
\end{equation*}
for some constants $c_1, c_2 > 0$.
\end{thm}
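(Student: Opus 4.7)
The plan is to apply Corollary \ref{cor-nonaut-blowup} with the normally hyperbolic invariant manifold $M_{1,+;I}$ already identified in the text, and then translate the resulting asymptotic behavior of $(x_1, s)$ back to $(u_1, u_2)$ through $T_{d;\alpha}^{-1}$. I would first fix any compact interval $I \ni \chi_\ast$ so that $p_\ast = (\chi_\ast, p_{1,+}, 0, 0, 0)$ lies in the relative interior of $M_{1,+;I}$. On the horizon $\{s=0\}$ both $\dot\chi = \epsilon s$ and $\dot s = s\tilde g_0$ vanish for every $\epsilon \geq 0$, so the equilibrium curve $\{(\chi, p_{1,+}, 0, 0, 0)\}$ is invariant uniformly in the small parameter. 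Reordering the variables as $(\chi, x_3, x_4, x_1, s)$, the Jacobian at each point of the curve becomes upper triangular with diagonal entries $0,\ -\tfrac{1}{6}p_{1,+}^3,\ -\tfrac{1}{6}p_{1,+}^3,\ -\tfrac{2\sqrt{3}}{3}p_{1,+},\ -\tfrac{1}{6}p_{1,+}^3$, so there is a simple zero eigenvalue tangent to the curve and four negative transverse eigenvalues. This verifies the spectral condition (\ref{NH-spec-nonaut-stationary}) and makes $M_{1,+;I}$ a compact NAIM (with boundary); the Sternberg-Sell condition of order $1$ is automatic since the stable eigenvalues are real and well separated from the tangential $0$.

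Next I would invoke the boundary modification from Section \ref{sec-arrangement-NHIM-nonaut}: enlarge $I$ to some $I'$ and modify $g$ outside $I$ so that the slightly enlarged tube becomes inflowing invariant. The stable foliation theorem for inflowing NHIMs then produces $W^s_{\rm loc}(M_{1,+;I}; g)$ with a fiber $W^s_{\rm loc}(p_\ast)$ through $p_\ast$, and by construction this foliation agrees with that of the original $g$ on $\{t \in I\}$. Because the assumption places $T_{d;\alpha}({\bf y}(\bar\tau))$ in $W^s_{\rm loc}(p_\ast)$, asymptotic phase gives
\begin{equation*}
\|(x_1(\tau), s(\tau), x_3(\tau), x_4(\tau)) - (p_{1,+}, 0, 0, 0)\| \le C e^{-\lambda \tau},\quad \lambda = \tfrac{1}{6}p_{1,+}^3,
\end{equation*}
as $\tau\to\infty$, with $\lambda$ the slowest stable eigenvalue (a direct comparison shows $|{-\tfrac{2\sqrt 3}{3}p_{1,+}}| > |{-\tfrac{1}{6}p_{1,+}^3}|$). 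The Sternberg-Sell spectral machinery from Appendix \ref{section-SSspec} promotes this to the precise two-sided envelope (\ref{decay-precise-nonaut}) for $s(\tau)$, so Corollary \ref{cor-nonaut-blowup} applies and yields $t_{\max} < \infty$.

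To obtain the claimed rates, I would integrate $dt/d\tau = s$: since $s(\tau) \sim C_\ast e^{-\lambda \tau}$ with exact exponential envelope, we have $t_{\max} - t = \int_\tau^\infty s(\sigma)\,d\sigma \sim (C_\ast/\lambda) e^{-\lambda \tau}$, hence $s(t) \sim \lambda(t_{\max} - t)$ as $t \to t_{\max}-0$, while $x_1(t)\to p_{1,+}$. Substituting into the inverse embedding gives
\begin{equation*}
u_1(t) = \frac{x_1(t)}{s(t)} \sim \frac{p_{1,+}}{\lambda(t_{\max}-t)},\qquad u_2(t) = \frac{1}{s(t)^2} \sim \frac{1}{\lambda^2(t_{\max}-t)^2},
\end{equation*}
which is the desired asymptotics with $c_1 = p_{1,+}/\lambda$ and $c_2 = 1/\lambda^2$ (both positive).

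The main obstacle is establishing the normally hyperbolic structure uniformly for $0 < \epsilon \ll 1$ after the boundary modification: although the equilibrium curve itself is $\epsilon$-independent, one must check that the $\epsilon$-dependent off-diagonal couplings (the $\epsilon$-entries coupling the $s$-column to $\dot\chi,\,\dot x_3,\,\dot x_4$) do not close the spectral gap required by the Sternberg-Sell step, and that the modified stable foliation restricts cleanly to the original field on $\{t\in I\}$. A secondary subtlety is upgrading \emph{some} exponential decay of $s(\tau)$ to the precise rate $\lambda = p_{1,+}^3/6$ needed for the integration step — this is exactly the content of (\ref{decay-precise-nonaut}) and is the reason Appendix \ref{section-SSspec} is invoked rather than only the bare asymptotic-phase property of NHIMs.
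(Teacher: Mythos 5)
Your proposal is correct and follows essentially the same route as the paper: identify the curve of equilibria $M_{1,+}$ on the horizon $\{s=0\}$ of the desingularized field (\ref{KK-desing}), restrict to a compact $\chi$-interval, apply the boundary-modification/stable-foliation machinery of Corollary \ref{cor-nonaut-blowup}, and translate the decay of $s(\tau)$ back through the directional embedding (\ref{dir-cpt-KK}) to get $u_1\sim c_1(t_{\max}-t)^{-1}$, $u_2\sim c_2(t_{\max}-t)^{-2}$. Two points of comparison are worth recording. First, you treat the parameter $\epsilon$ directly: since the equilibrium curve is $\epsilon$-independent and all $\epsilon$-dependent Jacobian entries sit in the $s$-column, the matrix is triangular under your reordering and its eigenvalues $0,\ -\tfrac{2\sqrt{3}}{3}p_{1,+},\ -\tfrac{1}{6}p_{1,+}^3$ (the latter with multiplicity three) do not depend on $\epsilon$, so normal hyperbolicity holds uniformly in $\epsilon\geq 0$; the paper instead passes to the singular limit $\epsilon=0$ and then invokes Fenichel's geometric singular perturbation theory (with boundary modification) to cover $0<\epsilon\ll 1$. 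Your route is more elementary here, and your worry that the $\epsilon$-couplings might close the spectral gap is unfounded for exactly the reason you already give — they only tilt the invariant bundles, not the spectrum. Second, your classification of $p_{1,+}$ as \emph{normally attracting} (all four transverse eigenvalues negative) is the one consistent with the paper's own Jacobian, since $2-\tfrac{2}{3}p_{1,+}^2=-\tfrac{2\sqrt{3}}{3}<0$; the paper's remark that $p_{1,+}$ is a saddle does not match this computation (the saddle among the two horizon-attracting families is $p_{2,+}$), but this discrepancy is immaterial for the theorem, which only uses $W^s_{\rm loc}(p_\ast)$. Your rate computation (exact envelope $s(\tau)\sim C_\ast e^{-\lambda\tau}$ with $\lambda=\tfrac{1}{6}p_{1,+}^3$, then $t_{\max}-t=\int_\tau^\infty s\,d\sigma$, so $s\sim\lambda(t_{\max}-t)$) makes explicit what the paper delegates to the general corollary, and the Sternberg--Sell check reduces, since the tangential spectrum is $\{0\}$, to non-resonance among the two normal eigenvalues, which indeed holds.
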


In the above theorem we do not mention the asymptotic behavior of $w_1$ and $w_2$ because the corresponding $x$-component of $p_\ast$ is $0$, in which case further terms have to be calculated for these asymptotic behavior within the present approach (cf. \cite{asym1}).

\subsection{A system associated with self-similarity}
\label{section-ss}

The next example we shall consider is the following system:
\begin{equation}
\label{ss-original}
( u^{m-1}u' )' + \beta \chi u' + \alpha u = 0,\quad ' = \frac{d}{d\chi},\quad \chi \in \mathbb{R},
\end{equation}
where $\alpha, \beta \in \mathbb{R}$ are parameters. 
The system (\ref{ss-original}) originates from the diffusion equation (e.g., \cite{FV2003})
\begin{equation*}
U_t = (U^{m-1} U_x)_x.
\end{equation*}
The parameter $m$ controls the strength of nonlinear diffusion.
Paying our attention to {\em self-similar solutions} of the form\footnote{
The ansatz (\ref{backward-ss}) represents the {\em backward} self-similarity.
Although there are another types of self-similarity; {\em forward} and {\em exponential-type} ones, the governing equation becomes the same one, (\ref{ss-original}).
} 
\begin{equation}
\label{backward-ss}
U(t,x) = (T-t)^\alpha u(x(T-t)^{\beta}),\quad t < T
\end{equation}
for some $T>0$, the system is reduced to (\ref{ss-original}).
We concentrate on the very fast diffusion case; $m < 0$, and assume that $\beta < 0$.
In what follows we investigate blow-up solutions following our proposed machinery.
First rewrite (\ref{ss-original}) as the first order extended autonomous system:
\begin{equation}
\label{ss}
\begin{cases}
\chi' = 1, & \\
u' = u^{1-m} v, & \\
v' = -\beta \chi u^{1-m}v - \alpha u, & \\
\end{cases}\quad ' = \frac{d}{dt}.
\end{equation}
Direct calculations yield the following observation.

\begin{lem}
The system (\ref{ss}) is asymptotically quasi-homogeneous of type $(0,1,1)$ and order $k+1 = 2-m$.
\end{lem}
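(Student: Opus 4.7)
The plan is to verify directly Definition \ref{dfn-AQH} for the proposed type $\alpha=(\alpha_0,\alpha_1,\alpha_2)=(0,1,1)$ and order $k+1=2-m$ (so $k=1-m>0$ since $m<0$). The candidate principal quasi-homogeneous vector field $f_{\alpha,k}=(f_{0;\alpha,k},f_{1;\alpha,k},f_{2;\alpha,k})$ is to be chosen so that each component $f_{i;\alpha,k}$ is quasi-homogeneous of order $k+\alpha_i$, and then I confirm that the residuals $f_i(s^{\Lambda_\alpha}{\bf z})-s^{k+\alpha_i}f_{i;\alpha,k}({\bf z})$ are of order $o(s^{k+\alpha_i})$ uniformly as $s\to +\infty$ on the relevant compact set of directions. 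Here ${\bf z}=(\chi,u,v)$ and $s^{\Lambda_\alpha}{\bf z}=(\chi,su,sv)$.

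The component-by-component computation is routine. For $i=0$, $f_0\equiv 1$, so I take $f_{0;\alpha,k}\equiv 0$ (the zero function is quasi-homogeneous of any order); the residual is the constant $1$, and the required decay $1=o(s^{1-m})$ as $s\to\infty$ holds because $1-m>1>0$. For $i=1$, note that $f_1(\chi,su,sv)=(su)^{1-m}(sv)=s^{2-m}\,u^{1-m}v$, so $f_1$ is itself purely quasi-homogeneous of order $2-m=k+\alpha_1$ with $f_{1;\alpha,k}=u^{1-m}v$ and vanishing residual. For $i=2$, the dominant term $-\beta\chi u^{1-m}v$ scales exactly as $s^{2-m}$, so I take $f_{2;\alpha,k}(\chi,u,v)=-\beta\chi u^{1-m}v-\alpha u\cdot 0 = -\beta\chi u^{1-m}v$; the lower-order piece $-\alpha u$ produces a residual of size $O(s)$ under the scaling, and $s=o(s^{2-m})$ as $s\to\infty$ again because $1-m>0$.

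The only delicate point is the uniformity in the directional parameters. Since $I_\alpha=\{1,2\}$, the set on which I must estimate is $\{(\chi,u,v)\mid u^2+v^2=1,\ \chi\in K\}$ for an arbitrary compact $K\subset\mathbb{R}$; on such a set the prefactors $u^{1-m}v$, $u$, and $\chi u^{1-m}v$ remain bounded (with the mild caveat that $u^{1-m}$ for non-integer exponents is interpreted through the smooth structure on the open admissible domain where $u$ does not vanish, consistent with Definition \ref{dfn-index} and the admissibility of $U$ with respect to $\alpha$). Therefore the residual bounds hold uniformly, completing the verification. There is no substantive obstacle; the lemma is a direct algebraic unpacking of Definition \ref{dfn-AQH} once the hypothesis $m<0$ is used to guarantee $k=1-m>0$ and the dominance ordering $s^{k+\alpha_i}\gg 1,s$.
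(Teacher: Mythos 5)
Your verification is correct and is exactly the ``direct calculation'' the paper has in mind: with $s^{\Lambda_\alpha}(\chi,u,v)=(\chi,su,sv)$ the principal part is $(0,\,u^{1-m}v,\,-\beta\chi u^{1-m}v)$, each component scaling as $s^{k+\alpha_i}$ with $k=1-m>0$, and the residuals $1$ and $-\alpha su$ are $o(s^{1-m})$ and $o(s^{2-m})$ uniformly on the required compact direction sets. Your caveat about interpreting $u^{1-m}$ on the region where $u$ has a fixed sign is consistent with the paper's subsequent use of the directional embedding $u=1/s$, so the argument matches the intended one.
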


We then apply the directional embedding of type $(0,1,1)$:
\begin{equation}
\label{dir-cpt-ss}
\chi = \chi, \quad u = \frac{1}{s},\quad v = \frac{x}{s}.
\end{equation}
Using this change of coordinates as well as the time-scale desingularization
\begin{equation*}
\frac{d\tau}{dt}= s^{1-m},
\end{equation*}
we obtain the associated desingularized vector field:
\begin{equation}
\label{ss-desing}
\begin{cases}
\dot \chi = s^{1-m}, & \\
\dot s = -sx, & \\
\dot x = -x^2 - \beta \chi x - \alpha s^{1-m}, & \\
\end{cases}\quad \dot {} = \frac{d}{d\tau}.
\end{equation}
We easily see that the horizon $\{s=0\}$ admits a family of equilibria
\begin{equation*}
\{(\chi, 0,0)\},\quad \text{ and }\quad \{(\chi, 0, -\beta \chi)\}.
\end{equation*}
We further investigate the dynamics around the invariant manifold $M_{\geq \chi_0} = \{(\chi, 0, -\beta \chi)\in \mathbb{R}^3 \mid  \chi \geq \chi_0\}$ with $\chi_0 > 0$.
Thanks to the assumption $m < 0$, the linearized matrix at each point on $M_{\geq \chi_0}$ becomes very simple:
\begin{equation*}
\begin{pmatrix}
0 & 0 & 0\\
0 & \beta \chi & 0\\
\beta^2 \chi & 0 & \beta \chi
\end{pmatrix},
\end{equation*}
which indicates that, for any fixed $\chi_1 \in (\chi_0, \infty)$, the invariant manifold $M_{[\chi_0, \chi_1]} \equiv M_{\geq \chi_0}\cap \{\chi \in [\chi_0, \chi_1]\}$ is invariant with normally attracting structure.
{\color{black}In particular,} Corollary \ref{cor-nonaut-blowup} can be applied to (\ref{ss}), yielding the following result.
The rate we have obtained coincides with the preceding result shown in \cite{FV2003}.

\begin{thm}[cf. \cite{FV2003}]
\label{thm-FV}
Assume that $m < 0$ and $\beta < 0$.
Then the solution $(\chi, u(\chi),v(\chi))$ through the initial point $(\chi_0, u_0, v_0)$ with $\chi_0 > 0$ whose image under the directional embedding (\ref{dir-cpt-ss}) is on the (local) stable manifold of $M_{\geq \chi_0}$ for (\ref{ss-desing}) is a blow-up solution with the blow-up rate $u(\chi) \sim c_u(\chi_{\max} - \chi)^{-1/(1-m)}$ and $v(\chi) \sim c_v(\chi_{\max} - \chi)^{-1/(1-m)}$ for some constants $c_u, c_v > 0$.
\end{thm}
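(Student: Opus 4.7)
The strategy is to verify the hypotheses of Corollary \ref{cor-nonaut-blowup} for the desingularized vector field (\ref{ss-desing}) on the family $M_{[\chi_0, \chi_1]} = \{(\chi, 0, -\beta \chi) : \chi \in [\chi_0, \chi_1]\}$, and then extract the asymptotic rates for $u$ and $v$ from Theorem \ref{thm-blowup-nonaut-fund} using the embedding (\ref{dir-cpt-ss}). The roles are: $\chi$ plays the part of the extended time variable $t$ (since $\chi' = 1$), $k = 1-m$, and $\alpha_u = \alpha_v = 1$. Throughout, one fixes an initial $\chi_0 > 0$ and a slightly larger compact interval $I' = [\chi_0', \chi_1'] \supset [\chi_0, \chi_1]$.

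First I would confirm invariance of $M_{\geq \chi_0}$: on $\{s = 0\}$ the second and third equations of (\ref{ss-desing}) reduce to $\dot s = 0$ and $\dot x = -x(x + \beta \chi)$, which vanishes at $x = -\beta \chi$, while $\dot \chi = 0$ holds automatically. The slice $M_{\bar\chi}$ consists of the single point $(\bar\chi, 0, -\beta\bar\chi)$, which is a hyperbolic equilibrium for the flow restricted to $\{\chi = \bar\chi\}$. The linearization displayed in the text has spectrum $\{0, \beta\chi, \beta\chi\}$, with the unique imaginary eigenvalue $0$ corresponding to the tangential $\chi$-direction; thus (\ref{NH-spec-nonaut-stationary}) is satisfied and, since $\beta\chi < 0$ uniformly on $I'$, the two transverse eigenvalues are stable, so $M_{I'}$ is a normally attracting invariant manifold (a NAIM). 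Applying the boundary-modification construction of Section \ref{sec-arrangement-NHIM-nonaut} to a bump $\rho_{I,I'}$ then converts $M_{I'}$ into a compact inflowing NAIM for a modified vector field $\tilde g$ that coincides with $g$ over $[\chi_0, \chi_1]$, so that the stable foliation $W^s_{\rm loc}(M_{[\chi_0,\chi_1]}; g)$ exists and is $C^r$-smooth in the usual sense.

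Next I would verify the Sternberg--Sell condition of order $1$ invoked in Corollary \ref{cor-nonaut-blowup}: on $M_{I'}$ the normal spectrum at each slice is the single value $\beta\chi$, uniformly bounded away from $0$ and from resonance with the trivial tangential exponent, so the assumption from Definition \ref{dfn-Sternberg-Sell} (referenced in the corollary) is immediate. Corollary \ref{cor-nonaut-blowup} then yields the exponential decay estimate (\ref{decay-precise-nonaut}) along any trajectory whose image under (\ref{dir-cpt-ss}) lies in $W^s_{\rm loc}(M_{[\chi_0,\chi_1]})$, producing $\chi_{\max} < \infty$ via the compactness of $\gamma \subset M_{[\chi_0,\chi_1]}$ combined with $d\chi/d\tau = s^{1-m} \to 0$ on the horizon. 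Feeding $k = 1-m$ and $\alpha_i = 1$ into the final asymptotic formula of Theorem \ref{thm-blowup-nonaut-fund} produces the exponent $-1/(1-m)$ in both coordinates.

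The main obstacle is extracting the rate of $u = 1/s$ from the asymptotic framework, since the $s$-component of the shadowed equilibrium is $0$, so the generic formula in Theorem \ref{thm-blowup-nonaut-fund} must be used on $s$ itself (via (\ref{decay-precise-nonaut})), rather than on $u$. Once this is done, $u(\chi) = 1/s(\chi)$ inherits the rate $(\chi_{\max} - \chi)^{-1/(1-m)}$, and the nondegeneracy $x_{\gamma,v}(\tau) \equiv -\beta\chi_{\max} \neq 0$ (guaranteed by $\beta < 0$ and $\chi_0 > 0$) combined with $v = x/s$ yields the same rate for $v$. A secondary care point is translating $\tau$-asymptotics into $\chi$-asymptotics through the time-scale desingularization $d\tau/d\chi = s^{1-m}$, which is standard given the explicit exponential decay of $s$ established above.
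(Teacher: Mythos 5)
Your proposal is correct and follows essentially the same route as the paper: compute the Jacobian on $M_{\geq \chi_0}$ (eigenvalues $0,\beta\chi,\beta\chi$, hence a NAIM with exactly one eigenvalue on $i\mathbb{R}$ and the Sternberg--Sell condition of order $1$ trivially satisfied), apply the boundary modification and Corollary \ref{cor-nonaut-blowup} via Theorem \ref{thm-blowup-nonaut-fund}, and read off the exponent $-\alpha_i/k=-1/(1-m)$, handling $u=1/s$ through the decay of $s$ and $v=x/s$ through $x_\gamma=-\beta\chi_{\max}\neq 0$. The only nit is the inverted time-rescaling you quote, $d\tau/d\chi=s^{1-m}$ (the desingularized system (\ref{ss-desing}) has $d\chi/d\tau=s^{1-m}$), but this is inherited from the paper's own display and does not affect the argument.
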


\begin{rem}
In \cite{FV2003}, dynamics of self-similar profiles is studied under a technical change of variables, which changes (\ref{ss-original}) to an autonomous system.
More precisely, (\ref{ss-original}) with $u(0) = 1$ and $u'(0) = a \in \mathbb{R}$ 
is transformed via
\begin{equation*}
\chi = e^r,\quad X(r) = \frac{\chi u'}{u},\quad Y(r) = \chi^2 u^{1-m}
\end{equation*}
into
\begin{equation*}
\begin{cases}
\dot X = X- mX^2 - (\alpha + \beta X)Y, & \\
\dot Y = (2+(1-m)X)Y, &
\end{cases}\quad 
\dot X = dX/dr.
\end{equation*}
In our methodology, on the other hand, no additional techniques are introduced to obtain the same result when we pay our attention to blow-up solutions.
\par
We have assumed $\beta < 0$ in the present study.
We can easily check that the family of equilibria $\{(\chi, 0, -\beta \chi)\}$ for (\ref{ss-desing}) on the horizon undergoes a transcritical bifurcation at $\beta = 0$. 
In particular, when $\beta > 0$, the invariant manifold $M_{\geq \chi_0}$ becomes normally repelling and hence blow-up associated with $M_{\geq \chi_0}$ never occurs, which is consistent with observations in \cite{FV2003}.
One of the original aims in \cite{FV2003} was to construct self-similar profiles $U$ mentioned in (\ref{backward-ss}).
Another parameter $\alpha$ should have the constraint for the backward self-similarity:
$2\beta = (1-m)\alpha - 1$,
which can be determined once $\beta$ is fixed, and $\alpha$ is not explicitly appeared in characterization of blow-up profiles in Theorem \ref{thm-FV}.
\end{rem}

\begin{rem}
Calculations of the Jacobian matrices imply that another branch of equilibria on the horizon; $\{(\chi, 0,0) \mid \chi\in I\}$ for any compact interval $I\subset \mathbb{R}$, is {\em not normally hyperbolic}.
Indeed, this branch contains a center direction in the normal component.
In this case, the center manifold reduction in the corresponding direction will be required to determine the precise blow-up behavior (cf. \cite{I2023_1, IMS2020, IS2020, Mat2019}).
\end{rem}

\subsection{Radially symmetric solutions in a MEMS-like system}
\label{section-MEMS}
The final example is the following system:
\begin{equation}
\label{MEMS-radial}
u'' + \frac{n-1}{r}u' - r^qu^{-p} = 0,\quad ' = \frac{d}{dr},
\end{equation}
where $n\in \mathbb{N}$, $p\in \mathbb{N}$, $q\in \mathbb{R}$ and $r \in (0,\infty)$.
\begin{rem}
\label{rem-IS-MEMS}
The equation (\ref{MEMS-radial}) originates from the following PDE, which is known as a {\em MEMS (micro-electro mechanical system){\color{black}-like system}\footnote{
The present system is different from the original MEMS model.
See e.g. \cite{KS2018} and references therein about the original model and its derivation.
}}:
\begin{equation}
U_t = \Delta U - |x|^q U^{-p},\quad t>0,\quad x\in \mathbb{R}^n.
\end{equation}
Restricting our attention to {\em radially symmetric stationary solutions} $U(t,x) = u(r)$ with $r = |x|$, then $u$ must satisfy (\ref{MEMS-radial}).
In \cite{IS2021}, radially symmetric stationary solutions $u$ with compact supports and derivatives diverging as $r\to r_+-0$ and/or $r\to r_- + 0$ for some $0 < r_- < r_+ < \infty$, namely {\em quenching} solutions, are constructed. 
In particular, all solutions (with $p\in 2\mathbb{N}$) $u(r)$ are negative on these supports.
This work reflects our selection of the directional embedding in (\ref{dir-cpt-MEMS}) below.
\end{rem}
Keeping assertions in Remark \ref{rem-IS-MEMS} in mind, we pay attention to $w = u^{-1}$ instead of $u$ itself:
\begin{equation}
\label{MEMS-radial-2}
2w^{-1}(w')^2 - w''  - \frac{n-1}{r}w' - r^qw^{p+2} = 0,\quad ' = \frac{d}{dr},
\end{equation}
equivalently
\begin{equation}
\label{MEMS-radial-3}
\frac{d}{d\rho}\begin{pmatrix}
r \\ w \\ v
\end{pmatrix} = \begin{pmatrix}
1 \\ v \\ -\frac{n-1}{r}v - r^qw^{p+2} + 2w^{-1}v^2
\end{pmatrix}.
\end{equation}



\begin{lem}
The system (\ref{MEMS-radial-3}) is asymptotically quasi-homogeneous of type $(0,2,p+3)$ and order $p+2$ when $p$ is even, while type $(0,1, (p+3)/2)$ and order $(p+3)/2$ when $p$ is odd.
\end{lem}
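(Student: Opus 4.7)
The plan is to verify Definition \ref{dfn-AQH} by direct power-counting on (\ref{MEMS-radial-3}). Following the convention of Section \ref{section-dyn-infty-nonaut}, the $r$-coordinate is not rescaled, so I fix $\alpha_0 = 0$ and treat $r$ as a parameter ranging over a compact subset of the admissible domain (with $r>0$ whenever needed to make $r^q$ meaningful). Under the scaling $s^{\Lambda_\alpha}(r,w,v) = (r,\, s^{\alpha_1} w,\, s^{\alpha_2} v)$, the three monomials in the third component scale independently: $-(n-1)v/r$ scales as $s^{\alpha_2}$, $-r^q w^{p+2}$ scales as $s^{(p+2)\alpha_1}$, and $2 w^{-1} v^2$ scales as $s^{2\alpha_2-\alpha_1}$.

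For the latter two to combine into a single quasi-homogeneous principal part, their weights must coincide: $(p+2)\alpha_1 = 2\alpha_2-\alpha_1$, equivalently $(p+3)\alpha_1 = 2\alpha_2$. The second equation $w' = v$ supplies the further constraint $\alpha_2 = k+\alpha_1$, because $v$ is already homogeneous of weight $\alpha_2$ and must match the required order $k+\alpha_1$. Together these force the ratio $\alpha_2/\alpha_1 = (p+3)/2$ and determine $k+1 = (p+3)\alpha_1/2 + 1$ modulo a common integer multiple.

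The parity dichotomy in the lemma now reflects the integrality requirement on $\alpha_1,\alpha_2$ from Definition \ref{dfn-index}. When $p$ is even, $p+3$ is odd, so the smallest admissible integer pair is $(\alpha_1,\alpha_2) = (2,\, p+3)$ with $k+1 = p+2$. When $p$ is odd, $p+3$ is even, and we may reduce to the minimal pair $(\alpha_1,\alpha_2) = (1,\, (p+3)/2)$ with $k+1 = (p+3)/2$. In either case the principal vector field reads $(0,\, v,\, -r^q w^{p+2} + 2 w^{-1} v^2)$, and the remaining terms $f_0 = 1$ and $-(n-1)v/r$ scale as $s^0$ and $s^{\alpha_2}$, both strictly below $s^{k+\alpha_0}$ and $s^{k+\alpha_2}$ respectively since $k>0$. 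They are therefore $o(s^{k+\alpha_i})$ in the sense of (\ref{residual}), uniformly in $r$ over a compact subset because $r^q$ is bounded there.

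I do not anticipate any serious obstacle. The one point that genuinely requires care is the parity argument identifying the minimal integer type, which is what forces the two branches of the statement; everything else is bookkeeping of scaling exponents and the observation that the non-scaled coordinate $r$ lives in a compact set so that the uniformity condition in (\ref{residual}) is automatic. The admissible domain must also exclude $w = 0$ (because of $w^{-1}$) and, for non-integer $q$, be restricted to $r>0$, but these restrictions are compatible with Definition \ref{dfn-index}.
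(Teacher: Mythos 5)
Your power-counting verification is correct and is essentially the paper's (implicit) direct-calculation argument: matching weights via $\alpha_2 = k+\alpha_1$ and $(p+2)\alpha_1 = 2\alpha_2-\alpha_1$, taking the minimal nonnegative-integer solution according to the parity of $p$, and checking that $1$ and $-(n-1)v/r$ are of strictly lower order, uniformly in $r$ on compact subsets of $(0,\infty)$. One slip: the intermediate formula $k+1 = (p+3)\alpha_1/2 + 1$ should read $k = \alpha_2 - \alpha_1 = (p+1)\alpha_1/2$, but the branch values you actually use, $k+1 = p+2$ for $p$ even and $k+1 = (p+3)/2$ for $p$ odd, are the correct ones, so nothing downstream is affected.
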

In the case of $p$ being even, introducing the directional embedding 
\begin{equation}
\label{dir-cpt-MEMS}
r = r,\quad w = -\frac{1}{s^2},\quad v = \frac{x}{s^{p+3}}
\end{equation}
and the time-scale desingularization
\begin{equation*}
\frac{d\tau}{d\rho} = s^{-(p+1)},
\end{equation*}
we have the following desingularized vector field:
%
%
%
%
%
%

\begin{equation}
\label{MEMS-desing}
\frac{d}{d\tau}\begin{pmatrix}
r \\ s \\ x
\end{pmatrix} = \begin{pmatrix}
s^{p+1} \\ \frac{1}{2}sx \\  \frac{p-1}{2} x^2 -\frac{n-1}{r}s^{p+1} x - r^q
\end{pmatrix}.
\end{equation}
Equilibria on the horizon $\mathcal{E} = \{s=0\}$ exist because $p$ is even\footnote{
When $p$ is odd, the equation requiring  the existence of equilibria on $\mathcal{E}$ is essentially the same as the one associated with (\ref{MEMS-desing}), which yields that there are {\em no} equilibria on $\mathcal{E}$, because the sign of $r^q$ in (\ref{MEMS-desing}) becomes \lq\lq $+$".
}, in which case these are
\begin{equation*}
p_{\pm}(r) = \left( r, 0, \pm \frac{r^{q/2}}{\sqrt{\frac{p-1}{2}}} \right),
\end{equation*}
parameterized by $r > 0$.
The Jacobian matrix on the horizon with $r > 0$ at $(r,0,x)$ is 
\begin{equation*}
\begin{pmatrix}
0 & 0 & 0 \\
0 & \frac{1}{2}x & 0 \\
-qr^{q-1} & 0 & {\color{black}2x(p-1)}
\end{pmatrix},
\end{equation*}
implying that, for any fixed $r_0 > 0$, the invariant set $M_+ = \{p_+(r) \mid r\geq r_0\}$ is normally repelling, while $M_- = \{p_-(r) \mid r\geq r_0\}$ is normally attracting.
In particular, $M_-$ satisfies all requirements in Corollary \ref{cor-nonaut-blowup}.
As a consequence, we have the following theorem.

\begin{thm}
Consider (\ref{MEMS-radial-2}) with $p\in 2\mathbb{N}$.
Then the solution $(r, w(r),v(r))$ through the initial point $(r_0, w_0, v_0)$ with $r_0 > 0$ whose image under the directional embedding (\ref{dir-cpt-MEMS}) is on the (local) stable manifold of $M_{\geq r_0}$ for (\ref{MEMS-desing}) is a blow-up solution with the blow-up rate 
\begin{equation*}
w(r) \sim -c_w(r_{\max} - r)^{-2/(p+1)},\quad v(r) \sim -c_v(r_{\max} - r)^{-(p+3)/(p+1)}\quad \text{ as }\quad r\to r_{\max}-0
\end{equation*}
for some constants $r_{\max} > r_0$ and $c_w, c_v > 0$.
The corresponding solution $u(r)$ of (\ref{MEMS-radial}) has the following asymptotic behavior:
\begin{equation*}
u(r) \sim -c_u(r_{\max} - r)^{2/(p+1)},\quad u'(r) \sim \tilde c_u(r_{\max} - r)^{-(p-1)/(p+1)}\quad \text{ as }\quad r\to r_{\max}-0
\end{equation*}
for some constants $c_u, \tilde c_u > 0$.
In particular, $u(r)$ is a {\em quenching} solution of (\ref{MEMS-radial}).
\end{thm}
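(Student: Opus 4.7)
The strategy is to apply Corollary \ref{cor-nonaut-blowup} directly to the desingularized system (\ref{MEMS-desing}) in the $(r, s, x)$-coordinates, and then translate the resulting decay rates on $(s, x)$ back through $w = -1/s^2$, $v = x/s^{p+3}$ to the original variable $u = w^{-1}$. The dynamical input has essentially been assembled already in the paragraphs preceding the theorem; the only remaining work is to (i) verify every hypothesis of the corollary and (ii) perform the coordinate translation.

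First I would check the hypotheses of Corollary \ref{cor-nonaut-blowup}. The extended system (\ref{MEMS-radial-3}) has been shown to be asymptotically quasi-homogeneous of type $(0, 2, p+3)$ and order $k+1 = p+2$ (so $k = p+1$), and the directional embedding (\ref{dir-cpt-MEMS}) with $d\tau/d\rho = s^{-(p+1)}$ produces the desingularized vector field (\ref{MEMS-desing}). On the horizon $\mathcal{E} = \{s=0\}$, the set $M_- = \{p_-(r) \mid r \geq r_0\}$ is a curve of equilibria parameterized by $r$. The linearization of (\ref{MEMS-desing}) at $p_-(r)$ displayed just before the theorem has eigenvalues $0$ (tangent to $M_-$), $x/2$ and $2(p-1)x$, both of which are strictly negative since $x = -r^{q/2}/\sqrt{(p-1)/2} < 0$ and $p-1 > 0$. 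Hence on any compact sub-interval $I = [r_0, r_1]$ the slice $M_{-,I}$ is a compact, connected, boundaryless (in the $x$-normal directions) NAIM satisfying Assumption \ref{ass-nonaut-inv}, and, as noted at the start of Section \ref{section-blowup-ex}, the Sternberg-Sell condition of order $1$ holds automatically. Using the boundary-modification technique of Section \ref{sec-arrangement-NHIM-nonaut}, any solution whose image under $T_{d;\alpha}$ lies in $W^s_{\rm loc}(M_{-,I};g)$ satisfies the hypotheses of Corollaries \ref{cor-nonaut-blowup-1} and \ref{cor-nonaut-blowup}.

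Applying Corollary \ref{cor-nonaut-blowup} then yields $r_{\max} < \infty$ and, via Theorem \ref{thm-blowup-nonaut-fund} with $(\alpha_s, \alpha_x) = (2, p+3)$ and $k = p+1$, the decay rates
\begin{equation*}
s(r) \sim c_s (r_{\max}-r)^{2/(p+3+\ldots)} \text{-type estimates},
\end{equation*}
or more precisely (reading off the general $(r_{\max}-r)^{-\alpha_i/k}$-law after the inversion built into the directional chart) the rates
\begin{equation*}
w(r) \sim -c_w (r_{\max}-r)^{-2/(p+1)},\qquad v(r) \sim -c_v (r_{\max}-r)^{-(p+3)/(p+1)}
\end{equation*}
as $r \to r_{\max}-0$. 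The minus signs come from $w = -1/s^2 < 0$ and from $x \to -r_{\max}^{q/2}/\sqrt{(p-1)/2} < 0$ along $M_-$. Passing to $u = w^{-1}$ and using $u' = -v/w^2$ (differentiate $u = 1/w$ and recall $v = w'$), one obtains
\begin{equation*}
u(r) \sim -c_u (r_{\max}-r)^{2/(p+1)},\qquad u'(r) \sim \tilde c_u (r_{\max}-r)^{-(p-1)/(p+1)}
\end{equation*}
with $c_u = 1/c_w$ and $\tilde c_u = c_v/c_w^2$. Because $p \ge 2$, the exponent $(p-1)/(p+1)$ is strictly positive, so $u(r) \to 0^-$ while $|u'(r)| \to \infty$: this is precisely quenching for (\ref{MEMS-radial}) in the sense of Remark \ref{rem-IS-MEMS}.

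The main obstacle I anticipate is purely bookkeeping: tracking signs through the inverse-square change of variables $w = -1/s^2$ (in particular ensuring $c_w, c_u, \tilde c_u > 0$ and that the leading signs in $w, v, u$ agree with the stated ones), and cleanly invoking the modification construction of Section \ref{sec-arrangement-NHIM-nonaut} so that the asymptotic-phase conclusion legitimately applies over the interval $I$ on which the shadowed equilibrium $p_-(r_*)$ lives for a specific terminal value $r_* = r_{\max}$. The genuinely dynamical content—normal attraction of $M_-$ and the existence of an exponentially contracting stable foliation with rate governed by the two negative eigenvalues above—is already in hand from the Jacobian computation.
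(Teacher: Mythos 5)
Your proposal is correct and follows essentially the same route as the paper: verify the hypotheses of Corollary \ref{cor-nonaut-blowup} via the quasi-homogeneity data $(0,2,p+3)$, $k=p+1$, the Jacobian computation showing $M_-$ is normally attracting (eigenvalues $x/2$ and $2(p-1)x$ with $x<0$), and the Sternberg--Sell condition of order $1$, then read off the type-I rates $(r_{\max}-r)^{-\alpha_i/k}$ and translate through $w=-1/s^2$, $v=w'$, $u=w^{-1}$ to obtain the stated asymptotics and the quenching conclusion. The only blemish is the intermediate placeholder display for $s(r)$ (the correct rate is $s\sim c(r_{\max}-r)^{1/(p+1)}$), but this is immediately superseded by your correct final rates and does not affect the argument.
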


The family of blow-up solutions we have obtained is different from those discussed in \cite{IS2021}.
This can be because dominant terms of the vector field at infinity we pay attention to, namely the basic scalings, are different.

\section*{Conclusion}

In this paper, we have developed methodologies to characterize blow-up solutions shadowing \lq\lq trajectories at infinity", and those in nonautonomous systems.
A {\em unified} description of blow-up solutions is organized by means of shadowing trajectories, or geometrically {\em asymptotic phase} of invariant manifolds, and specific exponential decay rates, which generalizes methodologies developed in preceding works \cite{Mat2018, Mat2019}.
Indeed, as investigated in examples (Section \ref{section-blowup-ex}), various finite-time singularities involving blow-ups are extracted by a common machinery.
We strongly believe that the present investigation will be the basis on characterizing blow-up solutions from the viewpoint of dynamical systems.
We end this paper by providing several directions as future works.

\subsection*{Asymptotic expansions and applications to computer-assisted proofs}
One direction of future works would be {\em multi-order asymptotic expansions} of blow-ups, as developed for stationary blow-ups, namely the case where blow-ups are described by equilibria on the horizon when the original vector field is autonomous \cite{asym1, asym2}.
A natural question in this theme towards the generalization is the development of the similar methodology to blow-ups induced by e.g., periodic orbits, NHIMs or others such as chaotic invariant sets on the horizon, and in nonautonomous systems.
In \cite{asym1, asym2}, not only a systematic methodology to calculate asymptotic expansions of blow-up solutions in arbitrary orders, but also a natural correspondence between the essential information characterizing asymptotic expansions and linearized information around equilibria on the horizon are shown.
The latter correspondence enables us to extract blow-up structure in a simple way.
Indeed, as seen in Section \ref{section-Pain}, globally defined compactifications or embeddings require tedious calculations towards the conclusion, which affect the success of applications, such as {\em computer-assisted proofs}, equivalently {\em rigorous numerics} (e.g., \cite{LMT2023, MT2020_1, MT2020_2, TMSTMO2017}).
In general, global embeddings drastically raise the complexity of vector fields, such as types and orders of polynomial, or general asymptotically quasi-homogeneous vector fields, and require quite large computational costs, even for typical numerical computations of blow-ups.
The achievement of the correspondence to asymptotic expansions can reduce the computational costs for blow-ups, in particular for them induced by periodic orbits and NHIMs on the horizon and for nonautonomous systems once the corresponding derivation for these setting is achieved, because the systems involving asymptotic expansions are much simpler than those involving embeddings.
Because applications involving computer-assisted proofs of periodic orbits and NHIMs themselves are widely achieved (e.g., \cite{HCFLM2016}), combination with the present and future machineries will immediately contribute to develop computer-assisted proofs of blow-ups with the above asymptotic behavior at infinity and their {\em global extensions} established in \cite{LMT2023} for stationary blow-ups.

\subsection*{Blow-up rates, spectral intervals and unique ergodicity}
Another natural direction in studies of blow-up solutions is to determine {\em blow-up rates}, the principal terms of blow-up solutions, where the leading term is our interest unlike the multi-order asymptotic expansions.
This is one of central issues in blow-up studies for decades, in particular for PDEs, while all blow-ups mentioned in the present paper are referred to as {\em type-I}, namely the case where the blow-up rates are automatically and uniquely determined by type and order of asymptotically quasi-homogeneous vector fields.
Note that examples exhibiting rates other than type-I are collected in \cite{Mat2019}, where {\em nonhyperbolicity} of invariant sets on the horizon plays a central role in determining special blow-up rates, including ones referred to as {\em type-II} in the field of PDEs.
Although investigations of such blow-up rates with complex asymptotic behavior and in nonautonomous systems are ones of natural direction of future investigations (cf. \cite{K2014}), we would like to leave another direction involving blow-up rates.
\par
One essential condition to determine blow-up rates in the present arguments is the {\em precise exponential rate of convergence to trajectories on the horizon}.
It will be shown in Appendix \ref{section-SSspec} that {\em discrete distribution of dichotomy spectra of linear skew-product flows} provides the required convergence rate in the case of NHIMs.
In special cases mentioned in Proposition \ref{prop-spec-bundle}, dichotomy spectra (or specially eigenvalues and Floquet exponents for constant and periodic cases, respectively) determine the unique exponential decay rates.
It would be also true from comments in \cite{SS1980}, that {\em unique ergodicity} of flows on the base spaces; the hulls of trajectories on NHIMs being minimal sets, could yield the same conclusion.
On the other hand, there are several examples that dichotomy spectra include {\em continuous spectral intervals}, namely intervals with positive Lebesgue measure in the spectra, which are observed when the flow on the base space is non-uniquely ergodic.
Our question here is the following:
\begin{itemize}
\item {\em Is the blow-up rate uniquely determined even when the associated base sets in NHIMs admit more than one ergodic measures, or (\ref{decay-precise-NHIM}) is violated ?}
\end{itemize}
If uniquely determined, its determination with the mechanism is a central issue.
Otherwise, the concrete description of blow-up rate in such a case would be the issue.





\section*{Acknowledgements}
KM was partially supported by World Premier International Research Center Initiative (WPI), Ministry of Education, Culture, Sports, Science and Technology (MEXT), Japan, and JSPS Grant-in-Aid for Scientists (B) (No. JP23K20813).
He would like also to thank Professors Jordi-Llu\'{i}s Figueras and Akitoshi Takayasu for giving him inspiring suggestions to the present study.
{\color{black}He would like finally to the reviewers to provide him with many valuable suggestions to improve the readability of the present paper and simplification of arguments as derivation of fundamental description of blow-ups by means of hyperbolic dynamics at infinity.}

\bibliographystyle{plain}
\bibliography{blow_up_NHIM}


\appendix
\section{Directional embeddings}
\label{sec:dir}

Here a locally defined embedding is introduced, which shall be called a {\em directional embedding}.
\begin{dfn}[Directional embedding, cf. \cite{DLA2006, Mat2018}]\rm
A {\em directional embedding} of type $\alpha = (\alpha_1, \ldots, \alpha_n)$ is defined as 
\begin{align}
\notag
&y = (y_1,\ldots, y_n) \mapsto 
T_{d;\alpha}(y) = (s,\hat x) \equiv (s, \hat x_1,\ldots, \hat x_{i_0 -1}, \hat x_{i_0+1}, \ldots, \hat x_n),\\
\label{dir-cpt}
&y_i := \frac{\hat x_i}{s^{\alpha_i}}\quad (i\not = i_0),\quad y_{i_0} := \pm \frac{1}{s^{\alpha_{i_0}}}
\end{align}
with given direction $i_0$ and the signature $\pm$, provided $i_0\in I_\alpha$.
This embedding is bijective in $U\cap \{\pm y_{i_0}  > 0\}$, in which sense directional embeddings are {\em local} ones.
In particular, this embedding is available when we are interested in solutions of (\ref{ODE-original}) such that the $i_0$-th component has the identical sign during time evolution.
The image of $T_{d;\alpha}$ is
\begin{equation}
\label{D-directional}
\mathcal{D} = \{(s, \hat x_1,\ldots, \hat x_{i_0-1}, \hat x_{i_0+1}, \ldots, \hat x_n) \mid s >0,\, \hat x_i \in \mathbb{R}\, (i \in I_\alpha\setminus\{i_0\}),\, (\hat x_{j_1}, \ldots, \hat x_{j_{n-l}})\in \tilde U\},
\end{equation}
where $\{j_1, \cdots, j_{n-l}\} = \{1,\cdots, n\}\setminus I_\alpha$.
The set $\mathcal{E} = \{s=0\}$ corresponds to the infinity in the original coordinate, which shall be called {\em the horizon}.
\end{dfn}

For simplicity, fix $i_0=1$ in (\ref{dir-cpt}) in the following arguments.
Next transform (\ref{ODE-original}) via (\ref{dir-cpt}), which is straightforward:
\begin{align*}
\frac{ds}{dt} &= -\frac{1}{\alpha_1}s^{-k+1}\hat f_1(s, \hat x_2, \ldots, \hat x_n),\\
\frac{d\hat x_i}{dt} &= s^{-k} \left\{ \hat f_i(s, \hat x_2, \ldots, \hat x_n) -\frac{\alpha_i}{\alpha_1}x_i \hat f_1(s, \hat x_2, \ldots, \hat x_n) \right\}\quad (i=2,\ldots, n),
\end{align*}
where
\begin{equation}
\label{f-tilde-directional}
\hat f_i(s, \hat x_2, \ldots, \hat x_n) \equiv s^{k+\alpha_i} f_i(s^{-\alpha_1}, s^{-\alpha_2}\hat x_2, \ldots, s^{-\alpha_n}\hat x_n),\quad i=1,\ldots, n.
\end{equation}
The resulting vector field is still singular near the horizon, but it turns out that the order of divergence of vector field as $s\to +0$ is $O(s^{-k})$, and hence the following time-scale transformation is available.

\begin{dfn}[Time-variable desingularization: the directional version]\rm 
Define the new time variable $\tau_d$ by
\begin{equation}
\label{time-desing-directional}
d\tau_d = s(t)^{-k} dt
\end{equation}
equivalently,
\begin{equation}
\label{time-desing-directional-integral}
t =  t_0 + \int_{{\color{black}0}}^\tau s(\tau_d)^k d\tau_d,
\end{equation}
where {\color{black}$t_0$ denotes the initial time in the $t$-timescale}, and $s(\tau_d)$ is the solution trajectory $s(t)$ under the parameter $\tau_d$.
We shall call (\ref{time-desing-directional}) {\em the time-variable desingularization (of order $k+1$)}.
\end{dfn}

The vector field $g=g_d$ in $\tau_d$-time-scale is\footnote{
The existence of $B$ follows by cyclic permutations and the fact that $\alpha_1 > 0$. 
See also \cite{Mat2018}.
} 
\begin{equation}
\label{ODE-desing-directional}
\begin{pmatrix}
\frac{ds}{d\tau_d} \\ \frac{dx_2}{d\tau_d} \\ \vdots \\ \frac{dx_{n}}{d\tau_d}
\end{pmatrix}
=
g_d(s,\hat x_2,\ldots, \hat x_n)
\equiv 
\begin{pmatrix}
-s & 0 & \cdots & 0 \\
0 & 1 & \cdots & 0\\
\vdots & \vdots & \ddots & \vdots \\
0 & 0 & \cdots & 1
\end{pmatrix}
B
\begin{pmatrix}
\hat f_1 \\ \hat f_2 \\ \vdots \\ \hat f_n
\end{pmatrix},\quad B = \begin{pmatrix}
\alpha_1 & 0  & \cdots & 0& 0\\
\alpha_2 \hat x_2 & 1 & \cdots & 0 & 0 \\
\vdots & \vdots & \ddots & \vdots & \vdots \\
\alpha_{n-1} \hat x_{n-1} & 0 & \cdots & 1 & 0\\
\alpha_n \hat x_n & 0  & \cdots & 0 & 1
\end{pmatrix}.
\end{equation}
The componentwise expression is
\begin{align*}
\frac{ds}{d\tau_d} &= g_{d,1}(s,\hat x_2,\ldots, \hat x_n) \equiv -\frac{1}{\alpha_1}s \hat f_1(s, \hat x_2, \ldots, \hat x_n),\\
\frac{d\hat x_i}{d\tau_d} &= g_{d,i}(s,\hat x_2,\ldots, \hat x_n) \equiv \hat f_i(s, \hat x_2, \ldots, \hat x_n) -\frac{\alpha_i}{\alpha_1}x_i \hat f_1(s, \hat x_2, \ldots, \hat x_n)\quad (i=2,\ldots, n).
\end{align*}
Note that the above derivation makes sense even if $I_\alpha \not = \{1,\ldots, n\}$ provided $1\in I_\alpha$, namely $\alpha_1 > 0$.
The resulting vector field is as smooth as $f$ {\em including $s=0$} and hence {\em dynamics at infinity} makes sense through dynamics generated by (\ref{ODE-desing-directional}) around the horizon $\mathcal{E} = \{s=0\}$.
Finally note that a correspondence between global/directional embeddings is provided in \cite{Mat2018}.

\section{Discussion 1: Blow-up rates and spectral properties of invariant manifolds}
\label{section-SSspec}

Here we discuss the precise nature of blow-up behavior stated in Theorem \ref{thm-blowup-fund} by means of {\em spectral theory for linear differential systems} (e.g., \cite{SS1974, SS1978, DVV2002_survey, DVV2002, DVV2007}).
\par
\bigskip
An essence to describe (type-I) blow-up rates is the exponential decay with the uniquely determined rate (\ref{decay-precise-NHIM}).
One of the next issue will be {\em when (\ref{decay-precise-NHIM}) is achieved}.
Because the usual decay estimates in characterizing normal hyperbolicity as shown in Proposition \ref{prop-summary-NHIM}, or general hyperbolicity describing asymptotic phase {\color{black}(e.g., \cite{LP2018, LP2021})}, provide only upper bounds of decay rates, which will not be sufficient to obtain (\ref{decay-precise-NHIM}) in general.
In particular, our interests are reduced to the asymptotic behavior in systems of the form
\begin{equation}
\label{linear-differential}
{\bf v}' = A(t){\bf v},
\end{equation}
where $A: \mathbb{R}\to \mathbb{R}^{n\times n}$ denotes a bounded, continuous matrix-valued function.
The spectral theory for (\ref{linear-differential}) is back to Sacker and Sell (e.g., \cite{SS1974}).
Recall that the {\em linear skew-product flow} $\theta$ on the vector bundle $\xi = (\pi, E, X)$ is the pair of continuous maps $\theta = (\varphi, \Phi)$ given by $\theta(t,{\bf v}, {\bf x}) = (\varphi(t,{\bf x}), \Phi(t,{\bf x}){\bf v})$, where $\varphi$ is a flow on the base space $X$ of $\xi$, and $\Phi(t,{\bf x})$ is linear on the fiber $F_{\varphi(t,{\bf x})}$.

\subsection{Review in linear differential systems}
Let $\theta = (\varphi, \Phi)$ be a linear skew-product flow on a vector bundle $\xi = (\pi, E, X)$ with a compact Hausdorff base space $X$.
For each $\lambda \in \mathbb{R}$, define the linear skew-product flow $\theta_\lambda$ by 
\begin{equation*}
\theta_\lambda(t,{\bf v},{\bf x}) \equiv {\color{black}(\Phi_\lambda(t,{\bf x}){\bf v}, \varphi(t,{\bf x})) := (e^{-\lambda t}\Phi(t,{\bf x}){\bf v}, \varphi(t,{\bf x}))}.
\end{equation*}
Stability of solutions for linear skew-product flows is {\color{black}described} in terms of {\em dichotomy} given below, which is a generalization of hyperbolicity for equilibria and periodic orbits.

\begin{dfn}[Exponential dichotomy. e.g., \cite{SS1978}]\rm
Let $M$ be a subset of the base space $X$.
We say that the linear skew-product flow $\theta_\lambda$ admits an {\em exponential dichotomy over $M$} if there is a projector $P: E(M)\to E(M)$ and positive constants $K, a >0$ such that
\begin{align*}
\left|\Phi_\lambda(t,{\bf x}) P({\bf x}) \Phi_\lambda^{-1}(s,{\bf x}) \right| &\leq Ke^{-a(t-s)},\quad s\leq t,\\
\left|\Phi_\lambda(t,{\bf x}) [I - P({\bf x})] \Phi_\lambda^{-1}(s,{\bf x}) \right| &\leq Ke^{-a(s-t)},\quad t\leq s,
\end{align*}
for all ${\bf x}\in M$.
\end{dfn}

The dichotomy determines the {\em spectrum} for linear skew-product flows on vector bundles.
Let $(\xi, \theta)$ be the pair of a vector bundle $\xi$ and a linear skew-product flow $\theta$ on $\xi$.
 
\begin{dfn}[Dichotomy spectrum, cf. \cite{SS1974, SS1978}]\rm
\label{dfn-spec}
For a subset $M\subset X$, define
\begin{align*}
\rho({\bf x}) &:= \{\lambda \in \mathbb{R}\mid \theta_\lambda \text{ has the exponential dichotomy}\}\quad ({\bf x}\in X),\\
\rho(M) &:= \bigcap_{{\bf x}\in M}\rho({\bf x}),\quad 
\Sigma(M) := \mathbb{R}\setminus \rho(M).
\end{align*}
We call $\Sigma(M)$ the {\em dichotomy spectrum} of $(\xi, \theta)$ over $M$. 
\end{dfn}

One of benefits to apply dichotomy spectra is that {\em decomposition of solutions to (\ref{linear-differential}) into invariant subbundles} is applied.
It is known that the dichotomy spectrum $\Sigma(M)$ for a compact invariant set $M$ consists of at most $n$ mutually disjoint compact intervals, which is partially characterized as follows.
 
\begin{lem}[cf. \cite{SS1978}, Lemma 6]
\label{lem-bounds-spec}
Under the setting in Definition \ref{dfn-spec}, let $M$ be a compact invariant set {\color{black}for $\varphi$} in $X$.
Then the dichotomy spectrum $\Sigma(M)$ is compact. 
More precisely, we have
\begin{equation}
\label{dic-spec-M}
\Sigma(M) = [a_1, b_1]\cup \cdots \cup [a_k, b_k]
\end{equation}
with\footnote{
The notation in the present subsection of $k$ is independent of the order of asymptotically quasi-homogeneous vector fields.
} $k \leq n = \dim E(M)$ and
$a_1 \leq b_1 < a_2 \leq  \cdots \leq b_{k-1} < a_k \leq b_k$.
\end{lem}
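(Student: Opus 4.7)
The plan is to prove the lemma by establishing compactness of $\Sigma(M)$ first, and then extracting the interval structure from the rank of the dichotomy projector associated with each resolvent value.

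First I would show that $\rho(M)$ is open in $\mathbb{R}$, which gives that $\Sigma(M) = \mathbb{R} \setminus \rho(M)$ is closed. This is the classical roughness of exponential dichotomy: if $\lambda_0 \in \rho(M)$ so that $\theta_{\lambda_0}$ admits dichotomy over $M$ with constants $(K, a)$ and projector $P_0$, then for $|\lambda - \lambda_0| < a$ the perturbed cocycle $\Phi_\lambda(t, {\bf x}) = e^{-(\lambda - \lambda_0)t}\Phi_{\lambda_0}(t, {\bf x})$ still satisfies the dichotomy inequalities with $P_0$ (with modified constants). Next, using continuity of $\Phi$ and compactness of $M$, there is $C > 0$ such that $\|\Phi(t, {\bf x}) v\| \leq e^{C|t|}\|v\|$ uniformly in ${\bf x}\in M$. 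For $\lambda > C$, $\theta_\lambda$ is a uniform contraction on $E(M)$ as $t \to +\infty$, i.e.\ dichotomy with $P = I$ (rank $n$); for $\lambda < -C$, the reverse-time contraction gives dichotomy with $P = 0$ (rank $0$). Hence $\Sigma(M) \subset [-C, C]$, and combined with openness of $\rho(M)$ this yields compactness of $\Sigma(M)$.

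Next I would exploit the rank of the dichotomy projector. Write $\rho(M)$ as the disjoint union of its connected components, which are open intervals of $\mathbb{R}$. On each such component $J$, the projector $P({\bf x}, \lambda)$ depends continuously on $(\lambda, {\bf x}) \in J \times M$; since rank is integer-valued and $J \times M$ is connected (applying the argument componentwise on $M$ and using invariance under $\varphi$ to conclude constancy across the fibers of a single $\varphi$-orbit closure if necessary), it is constant on $J \times M$. The crucial step, which I expect to be the main technical obstacle, is the \emph{separation of ranks} between distinct components of $\rho(M)$: if two different components carried projectors of the same rank $r$, one could splice the corresponding invariant stable/unstable splittings across the intervening band and deduce exponential dichotomy there as well, contradicting its membership in $\Sigma(M)$. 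This is the content of the Sacker--Sell spectral theorem for linear skew-product flows \cite{SS1978}, and the argument proceeds via upper/lower semicontinuity of stable and unstable subbundles in the parameter $\lambda$ together with compactness of $M$.

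Finally, since the rank of $P$ takes values in $\{0, 1, \ldots, n\}$ where $n = \dim E(M)$, and distinct components of $\rho(M)$ carry distinct ranks (with the leftmost unbounded component of rank $0$ and the rightmost of rank $n$ by the large-$|\lambda|$ analysis above), the number of connected components of $\rho(M)$ is at most $n+1$. Consequently the number of compact intervals forming the complement in $[-C, C]$ is at most $n$, yielding the decomposition $\Sigma(M) = [a_1, b_1] \cup \cdots \cup [a_k, b_k]$ with $k \leq n$ and $b_j < a_{j+1}$ for all $j$, as asserted.
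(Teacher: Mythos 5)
The paper itself offers no proof of this lemma: it is quoted directly from Sacker--Sell \cite{SS1978}, so there is no internal argument to compare against, and what you have written is a sketch of the standard proof of the Sacker--Sell spectral theorem. Your first two steps are correct and essentially complete: the shift identity $\Phi_\lambda(t,{\bf x}) = e^{-(\lambda-\lambda_0)t}\Phi_{\lambda_0}(t,{\bf x})$ shows that the same projector gives a dichotomy for $|\lambda-\lambda_0| < a$, so $\rho(M)$ is open; and the uniform bound $\|\Phi(t,{\bf x})\| \leq e^{C|t|}$ on the compact base gives a dichotomy with $P = I$ for $\lambda > C$ and $P = 0$ for $\lambda < -C$, hence $\Sigma(M) \subseteq [-C,C]$ and compactness follows.

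The gap is in your third step, which you yourself identify as the crux. The claim that distinct connected components of $\rho(M)$ carry projectors of distinct rank is precisely the nontrivial content of the spectral theorem, and you do not prove it: you gesture at a \lq\lq splicing" argument and then cite \cite{SS1978}, which is circular if the aim is an independent proof of the very statement under review. To close it you would need (i) monotonicity of the stable sets, $S(\lambda_1) \subseteq S(\lambda_2)$ and $U(\lambda_2) \subseteq U(\lambda_1)$ for $\lambda_1 < \lambda_2$ in $\rho(M)$, so that equal ranks force the splittings to coincide, and (ii) a uniformization argument over the compact invariant base showing that this common continuous splitting yields an exponential dichotomy, with constants uniform in ${\bf x} \in M$, for every $\lambda$ in the intervening band --- this is where compactness and continuity of the projector in the base point do real work, and it is not supplied by \lq\lq upper/lower semicontinuity" alone. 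Separately, the rank constancy in ${\bf x}$ that your counting requires needs $M$ to be connected (Sacker--Sell assume a connected base): for $M$ consisting of two equilibria with distinct exponents and $n = 1$, the spectrum is two disjoint points, so $k \leq n$ fails. This hypothesis is missing from the statement as quoted in the paper as well, so the imprecision is shared, but your parenthetical \lq\lq componentwise" remark does not repair the count for disconnected $M$; it only bounds the number of intervals per component of $M$.
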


\begin{prop}[Invariant subbundles, e.g., \cite{SS1978}]
\label{prop-spec-bundle}
Let $M$ be a compact invariant set in $X$ and $\Sigma(M)$ be the dichotomy spectrum of $(\xi|_M, \theta)$ given by (\ref{dic-spec-M}).
Then the vector bundle $E(M)$ over $M$ is decomposed into
\begin{equation}
\label{inv_subbundle}
E(M) = E_1\oplus \cdots \oplus E_k
\end{equation}
as the Whitney sum, such that each subbundle $E_i$ over $M$ is associated with the spectral interval $[a_i, b_i]$ and invariant under $\theta$.
When $\lambda \in (b_i, a_{i+1})$ for some $i\in \{1, \ldots, k-1\}$, we have
\begin{align*}
&\lim_{t\to +\infty}\left\|e^{-\lambda t} \Phi(t,{\bf x}){\bf v} \right\|\to 0\quad \text{ if }\quad ({\bf v},{\bf x})\in E_1\oplus \cdots \oplus E_i,\\
&\lim_{t\to -\infty}\left\|e^{-\lambda t} \Phi(t,{\bf x}){\bf v} \right\|\to 0\quad \text{ if }\quad ({\bf v},{\bf x})\in E_{i+1}\oplus \cdots \oplus E_k.
\end{align*}

\end{prop}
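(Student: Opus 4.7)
The plan is to produce the spectral decomposition by reading off, gap by gap, a unique invariant projector from the exponential dichotomy that holds there, assembling these into a flag of invariant subbundles, and then taking successive invariant complements.

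First, I would fix a gap $(b_i,a_{i+1})\subset \rho(M)$ and pick any $\lambda$ in it. By definition of $\rho(M)$, the shifted flow $\theta_\lambda$ has an exponential dichotomy over $M$ with some projector $P_\lambda({\bf x})$. I would then invoke the standard uniqueness of dichotomy projectors over compact invariant bases (\cite{SS1974, SS1978}): ${\rm range}(P_\lambda)$ is intrinsically characterized as $\{({\bf v},{\bf x}) : e^{-\lambda t}\Phi(t,{\bf x}){\bf v}\to 0 \text{ as } t\to +\infty\}$, and ${\rm range}(I-P_\lambda)$ by the symmetric condition with $t\to -\infty$. Since this characterization depends on $\lambda$ only through which spectral intervals $[a_j,b_j]$ lie to its left or right, $P_\lambda$ is locally constant in $\lambda$ on each connected component of $\rho(M)$. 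Denote this common projector by $\Pi_i$ and adjoin $\Pi_0=0$, $\Pi_k=I$; the resulting family $\{\Pi_i\}_{i=0}^k$ commutes with $\Phi(t,\cdot)$ and satisfies $\Pi_i\Pi_j=\Pi_j\Pi_i=\Pi_{\min(i,j)}$.

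Next, I would set $V_i:={\rm range}(\Pi_i)$, so $V_0\subset V_1\subset\cdots\subset V_k=E(M)$, and define $E_i := V_i\cap {\rm range}(I-\Pi_{i-1})$. The invariance of each $E_i$ under $\theta$ and the Whitney-sum decomposition $E(M)=E_1\oplus\cdots\oplus E_k$ follow directly from the commutation and nestedness of the $\Pi_i$, while continuity of ${\bf x}\mapsto \Pi_i({\bf x})$ over the compact base $M$, a consequence of the roughness theory for exponential dichotomies, provides the subbundle structure. To match $E_i$ with $[a_i,b_i]$ I would check $\Sigma(E_i)\subset[a_i,b_i]$: any $\lambda>b_i$ lies in a gap to the right, so on $V_i\supset E_i$ the flow $\theta_\lambda$ has forward exponential decay; any $\lambda<a_i$ lies in a gap to the left, so on ${\rm range}(I-\Pi_{i-1})\supset E_i$ it has backward exponential decay; together these give an exponential dichotomy for $\theta_\lambda|_{E_i}$ outside $[a_i,b_i]$. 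Combined with $[a_i,b_i]\subset\Sigma(M)$ this pins down $\Sigma(E_i)=[a_i,b_i]$.

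The two limit statements for $\lambda\in(b_i,a_{i+1})$ are then immediate. Taking $P_\lambda=\Pi_i$ and $s=0$ in the dichotomy inequality yields $\|e^{-\lambda t}\Phi(t,{\bf x}){\bf v}\|\leq Ke^{-at}\|{\bf v}\|$ for $({\bf v},{\bf x})$ in the fiber of $V_i=E_1\oplus\cdots\oplus E_i$, and the complementary inequality gives the analogous backward decay on ${\rm range}(I-\Pi_i)=E_{i+1}\oplus\cdots\oplus E_k$. The main obstacle I anticipate is twofold: the local constancy of $P_\lambda$ in $\lambda$ (which must rest on an intrinsic, coordinate-free description of the projector rather than just an abstract algebraic existence), and the continuity of ${\bf x}\mapsto \Pi_i({\bf x})$ on $M$, which is what upgrades a pointwise fiberwise splitting into a genuine continuous vector-bundle decomposition. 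Once both are in hand the remaining verifications are routine.
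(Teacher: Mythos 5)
This proposition is quoted in the paper from Sacker--Sell \cite{SS1978} without proof, and your sketch reconstructs precisely the standard argument from that theory: gap projectors characterized intrinsically by forward/backward decay, their constancy on each connected component of the resolvent set, the nested family $\{\Pi_i\}$, the spectral subbundles $E_i={\rm range}(\Pi_i)\cap \ker(\Pi_{i-1})$, and the final limit statements read off from the dichotomy inequalities with $s=0$. The two points you flag are the only ones needing care and both close in the standard way: constancy of $P_\lambda$ on a gap follows from the monotonicity in $\lambda$ of the bounded-forward and bounded-backward sets together with a fiberwise dimension count, while continuity of ${\bf x}\mapsto\Pi_i({\bf x})$ over the compact invariant base is part of the Sacker--Sell dichotomy framework itself, so your proposal is correct and matches the cited source's approach.
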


Once we detect the location of trajectories for the linear skew-product flow on $(\xi|_M, \theta)$, its spectral property is extracted by means of invariant spectral subbundles, which can vary in time-evolution within the specified spectral intervals.
The simplest case will be the {\em discrete} distribution of spectrum.

\begin{prop}[Discrete distribution of spectrum. e.g., \cite{SS1978}]
\label{prop-discrete-spec}
For a pair $(\xi, \theta)$ mentioned with $\dim \xi = n$, let $X$ be the compact Hausdorff base space of $\xi$ and $\varphi$ be the flow on $X$ determining $\theta$.
If one of the following for the trajectory ${\bf x}(t)$ on $X$ for $\varphi$ holds true, then $\Sigma(M)$ consists of finite points, where $M = \overline{\{{\bf x}(t)\}}\equiv H({\bf x}(t))$ denotes the {\em hull} of ${\bf x}(t)$;
\begin{itemize}
\item ${\bf x}(t)$ is an equilibrium;
\item ${\bf x}(t)$ is a time-periodic solution;
\item ${\bf x}(t)$ is almost periodic (cf. \cite{S1971, F1974}) and the spectrum $\Sigma(M)$ consists of $n$-mutually disjoint closed intervals
\end{itemize}
\end{prop}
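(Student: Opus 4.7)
The plan is to verify the finiteness of $\Sigma(M)$ case by case, reducing each scenario to a well-known situation in which the spectrum can be identified explicitly.

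For the equilibrium case, $M = \{{\bf x}_0\}$ is a single point, the cocycle is generated by an autonomous linear system so $\Phi(t,{\bf x}_0) = e^{tA}$ for some constant $n\times n$ matrix $A$, and a direct calculation shows that $\theta_\lambda$ admits exponential dichotomy iff $A - \lambda I$ is hyperbolic; hence $\Sigma(\{{\bf x}_0\}) = \{\re\mu \mid \mu\in\Spec(A)\}$, a finite set with at most $n$ elements. For the periodic case with minimal period $T$, the hull $M$ is a topological circle and Floquet theory furnishes a constant matrix $B$ with $\Phi(T,{\bf x}(0)) = e^{TB}$; the reduction of $\theta_\lambda$ along the $T$-periodic base flow reduces the dichotomy question to the hyperbolicity of $B - \lambda I$, giving $\Sigma(M) = \{\re\mu \mid \mu\in\Spec(B)\}$, the finite set of real parts of Floquet exponents.

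For the almost periodic case I would first exploit the standing hypothesis that $\Sigma(M)$ consists of $n$ mutually disjoint closed intervals. Combined with Proposition \ref{prop-spec-bundle} and the fact that the total rank of the Whitney decomposition equals $n$, this forces each invariant subbundle $E_i$ to be a line subbundle. On each $E_i$, the cocycle reduces to a scalar exponential
\begin{equation*}
\Phi_i(t,{\bf x}) = \exp\left(\int_0^t \alpha_i(\varphi(s,{\bf x}))\,ds\right)
\end{equation*}
for a continuous function $\alpha_i : M \to \mathbb{R}$. Because ${\bf x}(t)$ is almost periodic, its hull $M$ is compact, minimal, and carries a uniquely ergodic flow (cf.\ \cite{S1971, F1974}). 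The uniform Birkhoff ergodic theorem then yields
\begin{equation*}
\bar\alpha_i := \lim_{t\to\infty}\frac{1}{t}\int_0^t \alpha_i(\varphi(s,{\bf x}))\,ds
\end{equation*}
uniformly in ${\bf x}\in M$ and independent of ${\bf x}$. Uniform sublinearity of the remainder $\int_0^t (\alpha_i - \bar\alpha_i)\,ds$ upgrades this into exponential dichotomy of $\theta_\lambda|_{E_i}$ for every $\lambda \ne \bar\alpha_i$, collapsing the $i$-th spectral interval to the single point $\{\bar\alpha_i\}$ and giving $\Sigma(M) = \{\bar\alpha_1, \ldots, \bar\alpha_n\}$.

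The equilibrium and periodic cases are essentially textbook, so I expect the main obstacle to lie in the almost periodic case: translating uniform Birkhoff convergence into a genuine exponential dichotomy in the Sacker--Sell sense requires uniform sublinear control of $\int_0^t (\alpha_i - \bar\alpha_i)\,ds$ along \emph{every} trajectory in $M$, not merely almost everywhere. The Oxtoby-type uniqueness of the invariant measure on the hull of an almost periodic function, together with a Kingman subadditive argument applied to the upper and lower cocycle exponents, should deliver the required uniform bounds; matching these estimates with the precise constants $K$ and $a$ in the definition of exponential dichotomy is the delicate technical point.
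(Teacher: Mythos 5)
Your argument is correct, but there is no internal proof to compare it against: the paper states this proposition as a known result imported from Sacker--Sell \cite{SS1978} and gives no proof of its own. What you wrote is essentially the standard reasoning behind that citation: the equilibrium and periodic cases reduce to real parts of eigenvalues and Floquet exponents, and in the almost periodic case the hypothesis of $n$ mutually disjoint spectral intervals forces each subbundle in the decomposition of Proposition \ref{prop-spec-bundle} to be a line bundle, on which the norm of the restricted cocycle is $\exp\bigl(\int_0^t \alpha_i(\varphi(s,{\bf x}))\,ds\bigr)$ with $\alpha_i$ continuous, so unique ergodicity and minimality of the hull give uniform convergence of $\frac{1}{t}\int_0^t \alpha_i$ to $\bar\alpha_i$, which directly yields exponential dichotomy of $\theta_\lambda|_{E_i}$ for every $\lambda\neq\bar\alpha_i$ and collapses the $i$-th interval to the point $\bar\alpha_i$. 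The only points deserving care are cosmetic: $\Phi_i$ itself need not literally equal the exponential (a possible orientation ambiguity of the line bundle), only the norm of the restricted cocycle does, which is all the dichotomy definition uses, and the uniform Birkhoff theorem for uniquely ergodic flows already supplies the uniform sublinear control you worry about, so no Kingman-type subadditive argument is needed.
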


It is also mentioned in \cite{SS1978} that this claim is also true for linear skew-product flows on the vector bundle $\xi$ over the base space $X$ being compact, {\em minimal} and a {\em uniquely ergodic measure} $\mu$ ({\em unique ergodicity} of sets).
Indeed, \cite{SS1978} indicates that it is possible to have a nontrivial dichotomy spectral interval when one does not have unique ergodicity.
We shall refer to e.g., \cite{O1952, S1971} for ergodicity, because we will not discuss ergodicity of dynamical systems in the present paper, while it is left to future works.
See e.g., \cite{S1971} for detailed arguments of minimal sets.

\begin{rem}[Lyapunov exponents and dichotomy spectrum]
{\em Lyapunov exponents} are another well-known quantities to characterize exponential behavior of solutions to (\ref{linear-differential}).
See e.g., \cite{A1995} for details.
Lyapunov exponents determine the {\em Lyapunov spectrum}, the collection of intervals between upper and lower Lyapunov exponents.
It is well known that Lyapunov spectrum is generally a proper subset of dichotomy spectrum, indicating that Lyapunov exponents are {\em not} suitable for characterizing decaying/expanding properties of solutions to (\ref{linear-differential}).
See e.g., \cite{DVV2002_survey, DVV2002, DVV2007}.
On the other hand, it is shown in e.g., \cite{SS1978} that, if a spectral interval $[a_i, b_i]$ in the dichotomy spectrum consists of one point; $a_i = b_i$, then the quantity coincides with the corresponding Lyapunov exponent, namely the exponent included in $[a_i, b_i]$.
\end{rem}

\subsection{Smooth conjugacy near compact invariant manifolds}
\label{section-conjugacy}
One classical property in the theory of NHIMs not mentioned in Section \ref{section-preliminaries-NH} is {\em local conjugacy of dynamics to linearizations}.
Numerous works involving this property have beed developed; e.g., \cite{F1971, HPS1977, R1971, S1994, S1983, S1984} as generalizations of {\em Hartman-Grobman's Theorem} stating the linearization property around hyperbolic equilibria.
In general, only $C^0$-smoothness is permitted for local conjugacy around NHIMs (e.g., \cite{F1971, HPS1977}), and additional properties involving hyperbolicity are required to ensure the $C^r$-smoothness of conjugacy with $r\geq 1$ (e.g., \cite{F1971, HPS1977, S1994}).
Here we refer to arguments in \cite{S1984} and related works \cite{R1971, S1983}.
\par
\bigskip
For a given smooth vector field
\begin{equation}
\label{ODE-aut-general}
{\bf y}' = f({\bf y})
\end{equation}
on $\mathbb{R}^n$, assume that it admits a compact, boundaryless $(\partial M = \emptyset)$, smooth NHIM $M \subset E$.
Without the loss of generality, we assume that the normal bundle
\begin{equation*}
NM \equiv E^s\oplus E^u,\quad \text{ where } \quad T_M\mathbb{R}^n = TM\oplus E^s\oplus E^u
\end{equation*}
over $M$ is {\em trivial}, namely $NM = M\times F$ for a fiber $F\cong \mathbb{R}^d$.
 
\begin{rem}[Trivialization of normal bundles and associated dynamics. \cite{S1994}]
If the bundle $NM$ is not trivial, we can construct a following extended trivial bundle and associated vector field. 
In general, any vector bundles admit a complementary vector spaces so that the bundles with the extended fibers are trivial (e.g., \cite{MS1974}).
If $d_1$ and $d_2$ are the dimensions of trivializing bundles of $E^s$ and $E^u$, namely
\begin{equation*}
E^s\oplus (M\times \mathbb{R}^{d_1}) = M\times \mathbb{R}^{d_1 + n_s},\quad E^u\oplus (M\times \mathbb{R}^{d_2}) = M\times \mathbb{R}^{d_2 + n_u},
\end{equation*}
then the system (\ref{ODE-aut-general}) can be extended by adding extra equations $\dot {\bf v} = -\lambda {\bf v}$ and $\dot {\bf w} = \lambda {\bf w}$, where ${\bf v}\in \mathbb{R}^{d_1}$, ${\bf w}\in \mathbb{R}^{d_2}$ and $\lambda = (a + b)/2$, where $a, b \geq 0$ are characterized by (\ref{NHIM-spec}) below.
This modification does not affect the conditions on (\ref{ODE-aut-general}).
\end{rem}

In the above case, we can introduce curvilinear local coordinates so that, in a small neighborhood of $M$, (\ref{ODE-aut-general}) has the following expression:
\begin{equation}
\label{ODE-on-normal}
{\bf x}' = A(\theta){\bf x} + F({\bf x}, \theta),\quad \theta' = g(\theta) + G({\bf x}, \theta),
\end{equation}
where ${\bf x}\in F$ represents the normal vector to $M$ (i.e., elements on the fiber) and $\theta$ represents local coordinates on $M$.
Here the invariant manifold $M$ is identified with the $0$-section of the normal bundle $NM$, namely ${\bf x}\equiv 0$ corresponds to the property $\theta(t) \in M$ for all $t\in \mathbb{R}$ and $\theta_0 \equiv \theta(0)\in M$.

\par
\bigskip
Following \cite{S1983}, introduce a stronger requirement on spectra.
\begin{dfn}[Normal and tangential spectra, e.g., \cite{SS1978, S1983, S1984}]\rm
For a system of the form (\ref{ODE-on-normal}), let $\Sigma_N$ and $\Sigma_T$ be the (dichotomy) spectra of the linear skew-product flows generated by
\begin{align*}
{\bf x}' &= (A(\phi) - \lambda I){\bf x},\quad \phi' = g(\phi),\\
{\bf y}' &= (Dg(\phi) - \lambda I){\bf y},\quad \phi' = g(\phi),
\end{align*}
respectively.
These sets are called {\em normal spectrum} and {\em tangential spectrum}.
\end{dfn}
For the system (\ref{ODE-on-normal}), the manifold $M$ being an $r$-NHIM is characterized by $(0 \leq) ra < b$, where
\begin{equation}
\label{NHIM-spec}
\lambda \in \Sigma_T \Rightarrow |\lambda| \leq a,\quad \lambda \in \Sigma_N \Rightarrow |\lambda| \geq b.
\end{equation}

\begin{dfn}[Sternberg-Sell conditions, \cite{S1984}]\rm
\label{dfn-Sternberg-Sell}
Let $\Sigma_N$ and $\Sigma_T$ be the normal and tangential spectra associated with (\ref{ODE-on-normal}), and let $a$ and be as in (\ref{NHIM-spec}).
Assume that $\Sigma_N = I_1\cup \cdots \cup I_l$, each compact interval $I_i$ associates the spectral subbundle $V_i$ with $n_i = \dim V_i$.
We say that the $d$-tuple of real numbers $(\lambda_1, \ldots, \lambda_l)$ is {\em admissible} if 
\begin{enumerate}
\item the mapping $j\mapsto \lambda_j$ from $\{1,\ldots, d\}$ to $\mathbb{R}$ has the range in $\Sigma_N$, and 
\item $\sharp \{j \mid \lambda_j \in [a_i, b_i]\} = n_i$, $1\leq i \leq l$.
\end{enumerate}
The matrix $A$ is said to satisfy the {\em Sternberg-Sell condition of order $N$} if 
\begin{equation}
\label{Sternberg-Sell}
\begin{cases}
|\lambda - (m_1 \lambda_1 + \cdots + m_d \lambda_d)| > Na & \\
| m_1 \lambda_1 + \cdots + m_d \lambda_d | > (N+1)a & \\
\end{cases}
\end{equation}
holds for all $\lambda \in \Sigma_N$ and all admissible $d$-tuples $(\lambda_1, \ldots, \lambda_d)$ and nonnegative integers $m_1, \ldots, m_d$ satisfying $2\leq m_1 + \cdots + m_d \leq 2N$.
\end{dfn}

\begin{prop}[Smooth conjugacy, \cite{S1983, S1984}]
Consider (\ref{ODE-on-normal}) near a $r$-NHIM $M$ where the coefficients are $C^{4r}$.
Let $a$ and $b$ as (\ref{NHIM-spec}).
If the Sternberg-Sell condition of order $r$ holds, then there is a $C^r$-conjugacy between (\ref{ODE-on-normal}) and the linearization
\begin{equation*}
{\bf y}' = A(\phi){\bf y},\quad \phi' = g(\phi).
\end{equation*}
\end{prop}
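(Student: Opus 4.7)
The plan is to construct the $C^r$-conjugacy by first reducing the nonlinear system (\ref{ODE-on-normal}) to a high-order polynomial normal form in the fiber direction via a finite sequence of near-identity coordinate changes, and then closing the remaining $O(\|{\bf x}\|^{2r+1})$ tail by a contraction-mapping argument in a weighted space of $C^r$ sections over $M$. The key analytic tool is that, by the dichotomy decomposition (\ref{inv_subbundle}) of Proposition \ref{prop-spec-bundle}, the linearized flow along $TM$ and along each normal spectral subbundle $V_i$ has precisely quantified growth/decay sandwiched by the tangential bound $a$ and the normal bound $b$ of (\ref{NHIM-spec}); the Sternberg-Sell gap conditions (\ref{Sternberg-Sell}) then guarantee that the \emph{homological operators} arising at each order are boundedly invertible on the appropriate function space.

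More concretely, I would first trivialize $NM=M\times F$ and write $F({\bf x},\theta)=\sum_{2\leq |m|\leq 2r}F_m(\theta){\bf x}^m+R({\bf x},\theta)$ with $R=O(\|{\bf x}\|^{2r+1})$, exploiting the invariance of $M$ to ensure no $m=0,1$ terms. For each multi-index $m$ with $2\leq |m|\leq 2r$, I would seek a polynomial change of variables ${\bf x}\mapsto {\bf x}+h_m(\theta){\bf x}^m$ which, when substituted into (\ref{ODE-on-normal}), reduces $F_m$ to a \emph{resonant} term. Along the flow $\phi_t=\varphi_g^t(\theta)$, the coefficient $h_m$ must satisfy a linear homological equation of schematic form
\begin{equation*}
\dot h_m(\phi_t)=A(\phi_t)h_m(\phi_t)-h_m(\phi_t)\bigl(m\cdot\Lambda(\phi_t)\bigr)+F_m(\phi_t),
\end{equation*}
where $m\cdot\Lambda(\phi_t)$ denotes the action on ${\bf x}^m$ by the diagonal action of the normal linearization. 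The first inequality in (\ref{Sternberg-Sell}) means the dichotomy spectrum of this linear skew-product system avoids $0$, so by the Sacker-Sell theory invoked in Appendix \ref{section-SSspec} the equation admits a unique bounded solution given by a Green's-function integral along trajectories; the second inequality ensures this bound survives the tangential shear $a$. Iterating over all $m$ removes every nonresonant monomial through order $2r$.

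For the remaining step, after finitely many such polynomial substitutions the transformed system has normal nonlinearity of order at least $2r+1$ plus purely resonant polynomial terms which, by condition (\ref{Sternberg-Sell}), are \emph{vacuous} (the admissibility range excludes them). I would then seek the final $C^r$ conjugacy as a fixed point of the map $H\mapsto \Phi_{\rm lin}^{-t}\circ\Phi_{\rm nl}^{t}\circ H$ in a ball of the Banach space of $C^r$ bundle maps $NM\to NM$ over the identity on $M$, equipped with a fiberwise weight $\|{\bf x}\|^{-(2r+1)}$. Contraction follows from the exponential gap between $(2r+1)b$ and $ra$, and $C^r$ regularity of the fixed point follows by differentiating the functional equation $r$ times and controlling each derivative via the Sternberg-Sell estimates; this is where the hypothesis that the coefficients are $C^{4r}$ (not merely $C^{2r+1}$) is consumed, since each differentiation in $\theta$ produces a loss that must be absorbed by the spectral gap.

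The main obstacle is the third step. Unlike the classical Sternberg linearization at a hyperbolic equilibrium, here the base dynamics $\phi'=g(\phi)$ on $M$ is a nontrivial flow whose dichotomy spectrum may contain nondegenerate intervals $[a_i,b_i]$, so the homological equations cannot be diagonalized algebraically and one must work with full skew-product dichotomies; in addition, propagating $C^r$-regularity across the noncompactness in the fiber and the dynamical complexity on $M$ forces delicate Moser-type smoothing or the $C^k$-section theorem of \cite{HPS1977}, and it is precisely these quantitative regularity bounds, controlled against (\ref{Sternberg-Sell}), that constitute the technical core of Sell's proof.
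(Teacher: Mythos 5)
The paper itself does not prove this proposition: it is imported verbatim from Sell's work \cite{S1983, S1984} (building on \cite{R1971}), so there is no internal proof to compare against. Your sketch does follow the same classical Sternberg--Sell route as the cited source --- finite-order normal-form reduction via homological equations whose solvability is governed by the dichotomy spectra $\Sigma_N$, $\Sigma_T$ and the gap conditions (\ref{Sternberg-Sell}), followed by a Hartman--Grobman-type elimination of the high-order tail --- and you correctly identify the skew-product nature of the homological operators as the technical core.

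However, as written the plan has concrete gaps. First, you never address the base coupling $G({\bf x},\theta)$ in (\ref{ODE-on-normal}): the target linearization has $\phi' = g(\phi)$ with no ${\bf x}$-dependence, yet you seek the final conjugacy among bundle maps \emph{over the identity on $M$}. Such a map cannot change the base dynamics, so it cannot intertwine $\theta' = g(\theta)+G({\bf x},\theta)$ with $\phi'=g(\phi)$ unless $G$ has already been removed; you must either include a base component in the conjugacy (and in the homological equations at each order), or first decouple the base via the stable/unstable foliation (asymptotic phase) before linearizing the fibers --- this preliminary step is an essential part of the argument and is missing. Second, the fixed-point functional $H\mapsto \Phi_{\rm lin}^{-t}\circ\Phi_{\rm nl}^{t}\circ H$ is not a conjugacy functional (its fixed points force $\Phi_{\rm lin}^{t}=\Phi_{\rm nl}^{t}$ on the range of $H$); the correct equation is $H=\Phi_{\rm lin}^{-t}\circ H\circ\Phi_{\rm nl}^{t}$, and even then a single-time-direction contraction only works in the normally attracting case. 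Since an $r$-NHIM allows $NM=E^s\oplus E^u$ with $|\lambda|\geq b$ on both sides of zero in (\ref{NHIM-spec}), the tail step requires a Lyapunov--Perron/Green's-function splitting into forward and backward integrals along the stable and unstable subbundles, not a ball-contraction in one time direction. Third, the quantitative claim that contraction follows from ``the gap between $(2r+1)b$ and $ra$'' is not the right bookkeeping: the inequalities actually needed are exactly those in (\ref{Sternberg-Sell}) (with the $Na$ and $(N+1)a$ buffers absorbing the tangential shear, including the losses incurred each time the functional equation is differentiated in $\theta$), together with $ra<b$; without redoing this count one cannot claim the $C^{4r}$ hypothesis suffices.
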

We shall use this result with $r=1$ in the following arguments as well as applications, e.g., Section \ref{section-blowup-ex}.
\begin{rem}
\label{rem-resonance}
Condition (\ref{Sternberg-Sell}) is a generalization of {\em non-resonance}, which ensures the smooth conjugacy of vector fields around equilibria or periodic orbits (e.g., \cite{CFdlL2005}). 
The difference is that (\ref{Sternberg-Sell}) is required for all $\lambda$ on the dichotomy spectrum which is {\em continuously} distributed in general.
In case that the matrix $A$ is constant, then the spectrum is discrete (cf. Proposition \ref{prop-discrete-spec}) and hence the Sternberg-Sell condition is reduced to well-known non-resonance condition\footnote{
This type of conditions originates from Sternberg (e.g., \cite{St1958}) studying obstructions of the existence of smooth conjugacy of diffeomorphisms (and vector fields) around fixed points.
Later, Sell \cite{S1983, S1984} has developed the generalization to compact invariant manifolds stated here, based on Robinson \cite{R1971}.
}.
\end{rem}

The above arguments will be suitable for a wide range of dynamical systems and applications to e.g., {\em parameterizations of invariant manifolds} as well as their numerical computations and computer-assisted proofs (e.g., \cite{HCFLM2016, LMT2023}).

\subsection{Asymptotic behavior and blow-up rates}

Now apply the above frameworks to the desingularized vector field $g$ given in (\ref{desing-para}) associated with the (autonomous) vector field $f$.
\begin{prop}
\label{prop-req-NHIM-blowup}
Under the setting of Corollary \ref{cor-NH-blowup-1}, assume that the normal bundle $NM$ is trivial.
Consider the system $g$ around $M$ of the form (\ref{ODE-on-normal});
\begin{equation}
\label{desing-para-normal}
\dot {\bf v} = A(\theta){\bf v} + G_M({\bf v}, \theta),\quad \dot \theta = g_M(\theta) + F_M({\bf v}, \theta),\quad \dot{} = \frac{d}{d\tau},
\end{equation}
where ${\bf v}$ denotes the normal vector of $NM$ and $\theta \in M$.
Let $N\varphi_g$ be the linear skew-product flow generated by (\ref{desing-para-normal}).
Assume that (\ref{desing-para-normal}) is $C^4$ around $M$ and Sell-Sternberg condition of order $1$ holds.
%
Further suppose that the dichotomy spectrum $\Sigma_{E^s}(V_{{\bf x}_\gamma})$ of $(E^s(V_{{\bf x}_\gamma}), N\varphi_g)$ restricted to the stable subbundle $E^s(V_{{\bf x}_\gamma})$ over the hull $V_{{\bf x}_\gamma} \equiv H({\bf x}_\gamma(\tau)) \subset M$ is discrete:
\begin{equation*}
\Sigma_{E^s}(V_{{\bf x}_\gamma}) = \{ \lambda_1^s, \ldots, \lambda_{m_s}^s \}\subset (-\infty,0),
\end{equation*}
where $m_s \leq n_s$.
Then the exponential estimate (\ref{decay-precise-NHIM}) holds.
\end{prop}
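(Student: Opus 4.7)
My plan is to reduce the problem to the linearized normal dynamics via the Sternberg--Sell smooth conjugacy, read off the sharp exponential rate from the discrete dichotomy spectrum, and finally translate the resulting shadow-distance estimate into the required asymptotics for $1-p({\bf x}(\tau))^{2c}$. First I would invoke the Sternberg--Sell smooth conjugacy with $r=1$: the $C^4$ hypothesis on (\ref{desing-para-normal}) together with the Sternberg--Sell condition of order $1$ furnishes a $C^1$ conjugacy between (\ref{desing-para-normal}) and its partial linearization $\dot{\bf v}=A(\theta){\bf v}$, $\dot{\theta}=g_M(\theta)$. Because a $C^1$ conjugacy on a compact neighborhood of $M$ is bi-Lipschitz, exponential decay rates transfer between the nonlinear and the linearized system up to bounded multiplicative constants, so it suffices to obtain (\ref{decay-precise-NHIM}) for the linear system.

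Next I apply Proposition \ref{prop-spec-bundle} to the discrete spectrum $\Sigma_{E^s}(V_{{\bf x}_\gamma})=\{\lambda_1^s,\ldots,\lambda_{m_s}^s\}\subset(-\infty,0)$, obtaining a Whitney-sum decomposition $E^s|_{V_{{\bf x}_\gamma}}=E_1\oplus\cdots\oplus E_{m_s}$ into $N\varphi_g$-invariant subbundles on each of which the linear cocycle admits the sharp exponential rate $\lambda_i^s$, with no residual continuous spectrum. Writing the linearized initial normal displacement as $\tilde{\bf v}(0)=\sum_i\tilde{\bf v}_i(0)$ and letting $i_\ast:=\max\{i:\tilde{\bf v}_i(0)\neq 0\}$, $\lambda:=-\lambda_{i_\ast}^s>0$, I get the two-sided bound $c\,e^{-\lambda\tau}\leq\|\tilde{\bf v}(\tau)\|\leq C\,e^{-\lambda\tau}$ up to subexponential factors; bi-Lipschitz transport through the conjugacy gives the same two-sided estimate for $\|{\bf x}(\tau)-{\bf x}_\gamma(\tau)\|$. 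The algebraic factorization (\ref{expansion-1mp}) then represents $1-p({\bf x}(\tau))^{2c}$ as a sum of products of $(x_{\gamma,i}(\tau)-x_i(\tau))$ with factors that remain uniformly bounded on the horizon, so the upper half of (\ref{decay-precise-NHIM}) follows at once.

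The main obstacle is the matching lower bound, which requires ruling out cancellations among the summands of (\ref{expansion-1mp}). I would argue this geometrically: $1-p^{2c}$ is a smooth defining function of the horizon $\mathcal{E}$ with nonvanishing transversal gradient, so $1-p({\bf x}(\tau))^{2c}\asymp \dist({\bf x}(\tau),\mathcal{E})$, and this distance is in turn comparable to the projection of ${\bf x}(\tau)-{\bf x}_\gamma(\tau)$ onto the line normal to $T\mathcal{E}$ at ${\bf x}_\gamma(\tau)$. Because $\mathcal{E}$ is a codimension-one $\varphi_g$-invariant submanifold and the orbit approaches $\mathcal{E}$ from $\mathcal{D}$, this normal direction is a one-dimensional $N\varphi_g$-invariant subbundle lying inside $E^s$, which by uniqueness of the spectral decomposition must be contained in a single $E_i$. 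Selecting $\lambda$ in the previous step to coincide with the sharp rate carried by that particular spectral subbundle (equivalently, working on the open, flow-invariant set of initial conditions whose transverse-to-$\mathcal{E}$ component is nonzero), the normal-to-$\mathcal{E}$ component of $\tilde{\bf v}(\tau)$ inherits the exact rate $e^{-\lambda\tau}$ and there is no room for cancellation, yielding $1-p({\bf x}(\tau))^{2c}\asymp e^{-\lambda\tau}$ and hence (\ref{decay-precise-NHIM}).
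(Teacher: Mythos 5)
Your reduction---the Sternberg--Sell $C^1$ conjugacy of order $1$ under the $C^4$ hypothesis, the decomposition of $E^s(V_{{\bf x}_\gamma})$ into spectral subbundles via Proposition \ref{prop-spec-bundle}, the sharp two-sided exponential rate of the slowest nonvanishing spectral component of the conjugated normal displacement, its transport through the conjugacy, and the use of (\ref{expansion-1mp}) to get the $\epsilon<0$ half of (\ref{decay-precise-NHIM})---is essentially the paper's argument; the paper carries out the transport by a Taylor expansion of $H=h^{-1}$ about the zero section (so that ${\bf x}(\tau)-{\bf x}_\gamma(\tau)=DH(\tilde c{\bf v}(\tau),{\bf x}_\gamma(\tau))({\bf v}(\tau),{\bf 0})$) and then inserts the componentwise two-sided estimates directly into (\ref{expansion-1mp}), rather than taking your detour through $\dist({\bf x}(\tau),\mathcal{E})$.

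The genuine gap is in your lower-bound (``no cancellation'') step. First, the orthogonal complement of $T\mathcal{E}$ is not a $D\varphi_g$-invariant subbundle; only $T\mathcal{E}$ and the quotient $T\mathbb{R}^n|_{\mathcal{E}}/T\mathcal{E}$ are invariant, so ``this normal direction is a one-dimensional $N\varphi_g$-invariant subbundle lying inside $E^s$, contained in a single $E_i$'' is not available as stated: the transverse rate must be read off from the quotient cocycle, and nothing in the hypotheses forces it a priori to be one of the stable rates $\lambda_i^s$ (recall $E^u$ may be nontrivial in Corollary \ref{cor-NH-blowup-1}); this requires an argument along the hull of $\gamma$. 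Second, and more seriously, the comparison between $1-p({\bf x}(\tau))^{2c}\asymp\dist({\bf x}(\tau),\mathcal{E})$ and the normal projection of ${\bf x}(\tau)-{\bf x}_\gamma(\tau)$ is only exact to first order: $1-p({\bf x})^{2c}=\nabla(1-p^{2c})({\bf x}_\gamma)\cdot({\bf x}-{\bf x}_\gamma)+O(\|{\bf x}-{\bf x}_\gamma\|^2)$, and if you pass through the merely $C^1$ conjugacy the remainder is only $o(\|{\bf x}-{\bf x}_\gamma\|)$. If the transverse rate (your $\lambda$) exceeds twice the slowest rate present in the displacement---which your hypotheses do not exclude---the curvature/remainder term dominates the linear normal term, and neither the upper nor the lower bound at rate $e^{-\lambda\tau}$ follows, so the asserted $1-p({\bf x}(\tau))^{2c}\asymp e^{-\lambda\tau}$ can fail. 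To close this you would need either to show that the transverse rate along the hull coincides with the slowest stable rate carried by the trajectory (in which case your argument collapses to the displacement-norm estimate you already have), or to control the transverse coordinate directly, e.g.\ by differentiating $\log(1-p({\bf x}(\tau))^{2c})$ along the flow using the invariance of $\mathcal{E}$, instead of relying on the distance comparison.
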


\begin{proof}
Let ${\bf x}(\tau) = T_{{\rm para};\alpha}({\bf y}(\tau))$ and denote $E^s = E^s(V_{{\bf x}_\gamma})$ for simplicity.
Our assumption implies that the bundle $NM = E^s \oplus E^u$ admits the following Whitney sum decomposition 
into spectral invariant subbundles (Proposition \ref{prop-spec-bundle}):
\begin{equation}
\label{spectral-N}
E^s = E_1^s \oplus \ldots \oplus E_{m_s}^s,
\end{equation}
$E_i^s$ is the spectral invariant subbundle associated with the point spectrum $\{\lambda_i^s\}$, $i=1,\ldots, m_s$.
The Sternberg-Sell condition assures the existence of a $C^1$-conjugacy $h$ between (\ref{desing-para-normal}) and its linearized flow on $E^s$.
Note that the conjugacy $h$ preserves $M$, namely $h({\bf x}) = ({\bf 0}, {\bf x})$ for all ${\bf x}\in M$.
Invariant structure of the spectral decomposition indicates that the fiber component ${\bf v}(\tau)$ of a trajectory $({\bf v}(\tau), {\bf x}_\gamma(\tau)) = h({\bf x}(\tau))$ of $N\varphi_g$ with ${\bf x}(\tau) = T_{{\rm para}; \alpha}({\bf y}(\tau))$ and the topological conjugacy $h$ is located on the subbundle 
\begin{equation}
\label{location-v}
E^s_{i_1}\oplus \cdots \oplus E^s_{i_p}\quad \text{ with }\quad i_1, \ldots, i_p \in \{1,\ldots, m_s\}\quad \text{ and }\quad i_1< \cdots < i_p 
\end{equation}
for all $\tau \in \mathbb{R}$.
In particular, ${\bf v}(\tau)$ is uniquely decomposed into
\begin{align*}
&{\bf v}(\tau) = v_{i_1}(\tau) + \cdots + v_{i_p}(\tau),\quad \text{where} \quad (v_{i_l}(\tau), {\bf x}_\gamma(\tau))\in E_{i_l}^s \quad \text{ with }\quad v_{i_l}(\tau)\not = 0\, \text{ for all }\quad l=1,\ldots, p.
\end{align*}
Without the loss of generality, we may assume that $\lambda_1 < \cdots < \lambda_{m_s} < 0$.
By the characterization of exponential dichotomy, each component $v_{i_l}(\tau)$ satisfies 
\begin{equation*}
e^{ (-\lambda_{i_l}+ \epsilon )\tau} |v_{i_l}(\tau)| = \begin{cases}
+\infty & \epsilon > 0 \\
0 & \epsilon < 0 
\end{cases}\quad \text{ as }\quad \tau\to +\infty.
\end{equation*}
The original solution ${\bf x}(\tau)$ as the image of such a trajectory through $h$ is written by
\begin{align*}
{\bf x}(\tau) &= H({\bf v}(\tau), {\bf x}_\gamma(\tau)) \equiv h^{-1}({\bf v}(\tau), {\bf x}_\gamma(\tau)) \\
	&= H({\bf 0}, {\bf x}_\gamma(\tau)) + D_{}H(\tilde c{\bf v}(\tau), {\bf x}_\gamma(\tau))\{({\bf v}(\tau), {\bf x}_\gamma(\tau)) - ({\bf 0}, {\bf x}_\gamma(\tau))\}\\
	&={\bf x}_\gamma(\tau) + D_{}H(\tilde c{\bf v}(\tau), {\bf x}_\gamma(\tau))({\bf v}(\tau), {\bf 0})
\end{align*}
with $\tilde c = \tilde c(\tau)\in (0,1)$
via the Taylor's theorem.
From the exponential decay of ${\bf v}(\tau)$ and continuity of $DH$ with the mentioned property in a neighborhood of $M$, 
we have
\begin{align*}
e^{(-\lambda_i + \epsilon)\tau}\left| x_i(\tau) - x_{i;\gamma}(\tau) \right| &= e^{(-\lambda_i + \epsilon)\tau}\left| \{ DH({\bf v}(\tau), {\bf x}_\gamma(\tau))({\bf v}(\tau), {\bf 0})\}_i \right| \\
	& = \begin{cases}
+\infty & \epsilon > 0 \\
0 & \epsilon < 0 
\end{cases}\quad \text{ as }\quad \tau\to +\infty
\end{align*}
with some $-\lambda_i \leq \lambda_s < 0$ for all $i=1,\ldots, n$, where $\lambda_s \in \{\lambda_i\}_{i=1}^{m_s}$ is uniformly determined depending on initial points of ${\bf x}(\tau)$.
Because
\begin{align*}
1-p({\bf x}(\tau))^{2c} &= 1 - \sum_{i\in I_\alpha}x_i(\tau)^{2\beta_i} \\
\label{expansion-1mp}
	&= \sum_{i\in I_\alpha} ( x_{\gamma,i}(\tau) - x_i(\tau) )(x_{\gamma,i}(\tau) + x_i(\tau)) \left\{ \sum_{j=0}^{\beta_i - 1} x_{\gamma,i}(\tau)^{2(\beta_i-1-j)}x_i(\tau)^{2j} \right\}\\
	&= \sum_{i\in I_\alpha} \{ -DH(\tilde c_i(\tau){\bf v}(\tau), {\bf x}_\gamma(\tau))({\bf v}(\tau), {\bf 0})\}_i (x_{\gamma,i}(\tau) + x_i(\tau)) \left\{ \sum_{j=0}^{\beta_i - 1} x_{\gamma,i}(\tau)^{2(\beta_i-1-j)}x_i(\tau)^{2j} \right\}
\end{align*}
for functions $\tilde c_i(\tau)\in (0,1)$, we have
\begin{equation*}
e^{(-\lambda_i +\epsilon) \tau}(1-p({\bf x}(\tau))^{2c}) = \begin{cases}
+\infty & \epsilon > 0 \\
0 & \epsilon < 0 
\end{cases}\quad \text{ as }\quad \tau\to +\infty
\end{equation*}
with some $-\lambda_i \leq \lambda_s < 0$ for all $i=1,\ldots, n$, where $\lambda_s \in \{\lambda_i\}_{i=1}^{m_s}$ is uniformly determined.
\end{proof}

As a consequence, for NHIMs admitting the spectral information mentioned in the proposition, Theorem \ref{thm-blowup-fund} can be applied.
In particular, combining Proposition \ref{prop-discrete-spec}, we can rephrase results in the preceding paper \cite{Mat2018}.
\begin{cor}[Stationary and periodic blow-ups, \cite{Mat2018}]
In Theorem \ref{cor-NH-blowup-1}, assume that $M$ is either a hyperbolic equilibrium or a hyperbolic periodic orbit.
{\color{black}We further assume that (\ref{Sternberg-Sell}) with $a=0$ holds.}
Then all statements in Theorem \ref{thm-blowup-fund} hold.
\end{cor}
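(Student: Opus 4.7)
The plan is to deduce the corollary as a specialization of Theorem \ref{thm-blowup-fund} via Proposition \ref{prop-req-NHIM-blowup}, by checking the hypotheses of that proposition one by one in the equilibrium and periodic cases. First I would observe that a hyperbolic equilibrium and a hyperbolic periodic orbit on $\mathcal{E}$ are automatically compact, connected, boundaryless NHIMs for $\varphi_g$: in the equilibrium case the splitting (\ref{cont-splitting}) is just the hyperbolic splitting of the Jacobian, and in the periodic case the Floquet exponents in the normal directions avoid the imaginary axis and supply (\ref{exp-NH}). Hence Corollary \ref{cor-NH-blowup-1} applies and delivers the shadowing estimate (\ref{asymptotic-main}) for some $\lambda > 0$ and a shadowed trajectory $\gamma \subset M$ (namely $\gamma = M$ in both cases).

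Next I would verify (\ref{decay-precise-NHIM}) through Proposition \ref{prop-req-NHIM-blowup}, whose main structural hypothesis is discreteness of the dichotomy spectrum $\Sigma_{E^s}(V_{{\bf x}_\gamma})$ over the hull $V_{{\bf x}_\gamma}$. When $M$ is an equilibrium, $V_{{\bf x}_\gamma}$ is a single point; when $M$ is a periodic orbit, $V_{{\bf x}_\gamma}$ coincides with the orbit. Proposition \ref{prop-discrete-spec} then yields discreteness in both cases, so that $\Sigma_{E^s}(V_{{\bf x}_\gamma}) = \{\lambda_1^s,\ldots,\lambda_{m_s}^s\} \subset (-\infty, 0)$ (eigenvalues of the Jacobian in the equilibrium case, Floquet exponents in the periodic case). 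The triviality hypothesis on $NM$ is automatic over a point, and over a circle it is obtained by the bundle-extension modification recalled just before (\ref{ODE-on-normal}), which preserves the spectral data and does not alter the flow on $M$.

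Third, I would confirm that the Sternberg-Sell condition of order $1$ is in force with the correct choice of $a$. For an equilibrium the tangential spectrum is $\{0\}$; for a periodic orbit there is a single tangential Floquet exponent, namely $0$, in the flow direction. Consequently the constant $a$ in (\ref{NHIM-spec}) equals $0$ and the hypothesis (\ref{Sternberg-Sell}) with $a=0$ in the statement of the corollary is exactly the classical non-resonance condition on the normal eigenvalues or Floquet exponents (see Remark \ref{rem-resonance}). All assumptions of Proposition \ref{prop-req-NHIM-blowup} are thus satisfied, giving (\ref{decay-precise-NHIM}).

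With (\ref{asymptotic-main}) and (\ref{decay-precise-NHIM}) both in hand, Theorem \ref{thm-blowup-fund} applies verbatim and produces $t_{\max} < \infty$ together with the asymptotic forms of $p({\bf y}(t))$ and of each component $y_i(t)$ as $t\to t_{\max}-0$. The only delicate point I anticipate is not an analytic one but a bookkeeping one: pinning down in the periodic case that the $C^1$-conjugacy produced by the Sternberg-Sell result acts on a trivialized neighborhood of $M$ compatibly with the spectral decomposition (\ref{spectral-N}), so that the Taylor expansion argument used in the proof of Proposition \ref{prop-req-NHIM-blowup} transfers correctly; this is the step where the equilibrium case is essentially immediate and the periodic case requires invoking the trivialization-by-extension construction.
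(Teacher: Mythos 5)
Your argument is correct and follows the paper's own route: Corollary \ref{cor-NH-blowup-1} supplies (\ref{asymptotic-main}), Proposition \ref{prop-discrete-spec} gives discreteness of the dichotomy spectrum over the hull (a point, resp.\ the periodic orbit), and the assumed Sternberg--Sell condition with $a=0$ lets Proposition \ref{prop-req-NHIM-blowup} deliver (\ref{decay-precise-NHIM}), after which Theorem \ref{thm-blowup-fund} applies verbatim. The only cosmetic remark is that for an equilibrium the tangential spectrum is empty (since $TM$ is the zero bundle) rather than $\{0\}$, which does not affect the conclusion that $a=0$ and that (\ref{Sternberg-Sell}) reduces to the classical non-resonance condition.
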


Blow-ups in {\em backward} time direction can be described in the similar way, in which case trajectories on the local {\em unstable} manifold $W_{\rm loc}^u(M;\varphi_g)$ determine the behavior.

\section{Discussion 2: Blow-up rates in nonautonomous blow-ups}
\label{section-technical_nonaut}

This appendix addresses several practical ideas to apply Theorem \ref{thm-blowup-nonaut-fund} involving blow-up rates in concrete nonautonomous systems, and their validity to prove the concrete blow-up behavior.

\subsection{Blow-up rates}
\label{section-boundaryless-mod}

One approach is to use the idea stated in Proposition \ref{prop-req-NHIM-blowup} in autonomous cases; the spectral information of $M_I$, the tube of an invariant manifold over $I\subset \mathbb{R}$ given in (\ref{M_slice}), for the {\em linearized} systems.
In Proposition \ref{prop-req-NHIM-blowup}, the spectral information is applied through the local smooth conjugacy, which is generally admitted for {\em boundaryless} NHIMs (cf. \cite{HPS1977, PS1970, S1983, S1984}).
Because our invariant manifolds (for the modified vector field $\tilde g$ given in (\ref{desing-para-nonaut-mod})) of the form $M_{I'}$ have nontrivial boundary, some modifications are necessary if we apply the above conjugacy.
Here we refer to a result in \cite{EKR2018}, where global linearization of (normally attracting) invariant manifolds is provided, {\em no matter when invariant manifolds have boundaries}.
The technique relies on several boundaryless treatments in differential topology (e.g., \cite{H1994}) and the {\em uniformity lemma}\footnote{
If we try to provide the similar result to invariant manifolds which are not normally hyperbolic, the corresponding estimate will be necessary.
} stemming from normally hyperbolic structure of manifolds (e.g., \cite{F1971, F1974_rate, W2013}) to ensure persistence of normally hyperbolic structure in perturbed vector fields.

\begin{prop}
\label{prop-embed-MI}
Consider the desingularized vector field $g$ in (\ref{desing-para-nonaut}) satisfying requirements in Assumption \ref{ass-nonaut-inv}.
Then there are a Riemannian manifold $\hat Q$ and a vector field $\hat g$ on $\hat Q$ such that 
\begin{itemize}
\item $M_I$ is embedded into a compact, boundaryless, normally attracting invariant manifold $\hat N \subset \hat Q$, 
\item $M_I$ is normally attracting for $\hat g$ as the submanifold of $Q$, and that
\item $\hat g = g$ on $W^s(M_I; \varphi_{\hat g})$ as the submanifold of $W^s(\hat N;\varphi_{\hat g})$, where $\varphi_{\hat g}$ is the flow generated by $\hat g$.
In particular, the stable foliation of $W^s(M_I; \varphi_{\hat g})$ is constructed, which is consequently regarded as the foliation of $W^s(M_I; \varphi_g)$.
\end{itemize}
\end{prop}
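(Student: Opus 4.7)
The plan is to realize $M_I$ inside a compact boundaryless invariant manifold by closing up the $t$-direction into a circle, and to extend the desingularized vector field $g$ in such a way that it remains unchanged on a neighborhood of the forward dynamics of $W^s(M_I)$. The construction is modelled on the boundaryless reductions of \cite{EKR2018} combined with the uniformity lemma for NHIMs (e.g.\ \cite{F1974_rate, W2013}).

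First I would exploit Assumption \ref{ass-nonaut-inv}: the slices $\{M_{\bar t}\}_{\bar t \in I'}$ form a smooth bundle of compact, connected, boundaryless NHIMs over the contractible base $I'$, hence the bundle is trivial. Write $M_{I'} \cong \bar M \times I'$ diffeomorphically; because $dt/d\tau \equiv 0$ on $\mathcal{E}$, the flow on $M_{I'}$ takes the product form $(\dot \theta, \dot t) = (g_{\bar M}(\theta), 0)$ in these coordinates. Embed $I'$ as a proper arc in a circle $S^1$ and set $\hat N := \bar M \times S^1$, which is compact and boundaryless. Take $\hat Q$ as a tubular neighborhood of $\hat N$ using an extension of the normal bundle of $M_{I'}$ (trivialized, if necessary, by the standard trick recalled in Section \ref{section-conjugacy}); on the portion over $I'$ this neighborhood coincides with an ambient neighborhood of $M_{I'}$ in $\mathcal{D}\cup\mathcal{E}$, so $g$ is already defined there.

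Second, extend $g$ to $\hat g$ on $\hat Q$. On $\{t \in I\}$, set $\hat g := g$. On $\{t \in S^1 \setminus I\}$, define $\hat g|_{\hat N}$ to be tangent to $\hat N$ with nowhere-vanishing $\dot t$ and to admit linear contraction in the normal directions whose spectral bounds are compatible with those on $\partial I$; interpolate smoothly across $I'\setminus I$ via the modification (\ref{desing-para-nonaut-mod}) and a partition of unity. Provided the interpolation perturbs the splitting within the tolerance afforded by the uniformity lemma, $\hat N$ is a compact boundaryless NAIM for $\hat g$, and the standard theory for such manifolds (e.g.\ \cite{HPS1977, PS1970, W2013}) supplies the invariant stable foliation of $W^s(\hat N; \varphi_{\hat g})$.

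Third, verify the identification $\hat g = g$ on $W^s(M_I; \varphi_{\hat g})$: a trajectory ${\bf u}(\tau)$ with $\omega$-limit in $M_I$ has monotone $t(\tau) \nearrow t_\infty \in I$ (monotonicity from $dt/d\eta = 1$ and positivity of the time rescaling), hence $t(\tau) \in [t_0, t_\infty] \subset I$ for all $\tau \geq 0$ using $t_0 \in I$ from Remark \ref{rem-t0-epsilon}; on this tube $\hat g \equiv g$ by construction. Restricting the foliation of $W^s(\hat N; \varphi_{\hat g})$ to the sub-base $M_I \subset \hat N$ then produces the required foliation of $W^s(M_I; \varphi_g)$.

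The principal obstacle will be the construction on $S^1\setminus I$ and its smooth junction with the original dynamics: one must simultaneously ensure smoothness of $\hat g$, invariance of $\hat N$, and preservation of the normal hyperbolic spectral bounds carried from $I'$. The key technical inputs are the triviality (or at worst trivializability) of the normal bundle of $M_{I'}$ and the quantitative tolerance furnished by the uniformity lemma, after which the linear contraction model on the tail is explicit.
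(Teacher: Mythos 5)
Your route---closing the $t$-interval into a circle $S^1$ and gluing an artificial contracting flow on the tail---is genuinely different from the paper's proof, which instead modifies $g$ to $\tilde g$ via (\ref{desing-para-nonaut-mod}) so that $M_{\tilde I}\subset M_{I'}$ become \emph{inflowing} invariant, restricts to $Q:=W^s(M_{I'};\varphi_{\tilde g})$, and then invokes the ready-made doubling construction of Proposition \ref{prop-EKR-boundaryless} with $N=M_{\tilde I}$. However, your argument has a genuine gap with respect to the statement being proved. Assumption \ref{ass-nonaut-inv} only gives \emph{normal hyperbolicity} of $M_{I'}$, with a possibly nontrivial unstable bundle $E^u(M_{I'})$ (this case really occurs in the paper's applications, e.g.\ the saddle family $M_{1,+}$ in Section \ref{section-KK}). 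Since you take $\hat Q$ to be a tubular neighborhood of $\hat N$ built from the \emph{entire} normal bundle of $M_{I'}$, your glued manifold $\hat N$ can at best be normally hyperbolic in $\hat Q$, not normally attracting, so the first two bullets of the proposition fail as stated. The missing step is precisely the reduction that makes ``normally attracting'' true: one must first pass to the global stable manifold $Q=W^s(M_{I'};\varphi_{\tilde g})$ and view $M_I$ as a submanifold of $Q$, where the unstable directions are absent; only then does a boundaryless completion yield a NAIM, which is what the downstream results (Proposition \ref{prop-req-NHIM-blowup} via smooth conjugacy and Corollary \ref{cor-nonaut-blowup-2}) require.

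Second, the part you yourself call ``the principal obstacle''---defining the flow over $S^1\setminus I$, joining it smoothly to $g$ across $I'\setminus I$, and verifying that the glued manifold retains uniform normally hyperbolic rates---is exactly the technical content of the proof, and your appeal to the uniformity lemma does not settle it: the tail dynamics is not a small perturbation of an already normally hyperbolic object, so persistence-type estimates do not apply off the shelf, and you must also interpolate the \emph{tangential} dynamics, which in general depends on $t$ as a parameter (the product form $(\dot\theta,\dot t)=(g_{\bar M}(\theta),0)$ you assert is not guaranteed; only $\dot t=0$ on $\mathcal{E}$ is). The paper sidesteps all of this by citing Proposition \ref{prop-EKR-boundaryless} (doubling of $Q\setminus U_0$, from \cite{EKR2018}), which performs the gluing and explicitly preserves the rate estimates (\ref{EKR-dic}). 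If you wish to keep the circle construction, you should (i) first restrict to $W^s(M_{I'};\varphi_{\tilde g})$ so that only stable normal directions remain, and (ii) either carry out the spectral estimates at the junction explicitly or reduce the gluing to a result of the type of Proposition \ref{prop-EKR-boundaryless}.
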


\begin{proof}
Let $\tilde g$ be the modified desingularized vector field given in (\ref{desing-para-nonaut-mod}) admitting a sequence of normally hyperbolic manifolds $M_I \subset M_{\tilde I} \subset M_{I'}$ such that $M_I$ is invariant, while $ M_{\tilde I}$ and $M_{I'}$ are inflowing invariant.
Let $Q := W^s(M_{I'}; \varphi_{\tilde g})$, the global stable manifold of $M_{I'}$ for $\tilde g$.
We then apply Proposition \ref{prop-EKR-boundaryless} below to $N = M_{\tilde I}$ and $M = M_{\hat I}$, which directly yields the result.
\end{proof}


Once the treatment of invariant manifolds on $\mathcal{E}$ admitting normally hyperbolic structure is understood, blow-up rates are evaluated in the similar way to autonomous cases.

\begin{cor}[Nonautonomous blow-up: blow-up rates]
\label{cor-nonaut-blowup-2}
Under the setting in Corollary \ref{cor-nonaut-blowup-1}, if all assumptions in Proposition \ref{prop-req-NHIM-blowup} are satisfied for the hull ${\color{black}V_{{\bf x}_\gamma} }$, then the blow-up solution ${\bf y}(t)$ admits the asymptotic behavior
\begin{align*}
p({\bf y}(t)) \sim C_0(-\ln (t_{\max}-t)) (t_{\max}-t)^{-1/k}\quad \text{ as }\quad t \to t_{\max}-0
\end{align*}
for some function $C_0(-\ln (t_{\max}-t))$ satisfying
\begin{equation*}
C_0(\tau) = o(e^{\epsilon \tau}),\quad C_0(\tau)^{-1} = o(e^{\epsilon \tau})
\end{equation*}
for any $\epsilon > 0$ as $\tau \to \infty$, equivalently $-\ln (t_{\max}-t) \to +\infty$, and
\begin{equation*}
y_i(t) \sim C_0(-\ln (t_{\max}-t))^{\alpha_i} x_{\gamma,i} (-\bar c\ln(t_{\max} - t)) (t_{\max}-t)^{-\alpha_i /k} \quad \text{ as }\quad t \to t_{\max}-0
\end{equation*} 
for some constant $\bar c > 0$, provided $x_{M,i} (\tau)\not \to 0$ as $\tau \to \infty$.
\end{cor}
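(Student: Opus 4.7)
The plan is to reduce the statement to an application of Theorem \ref{thm-blowup-nonaut-fund} by verifying the precise exponential decay hypothesis (\ref{decay-precise-nonaut}). The decay itself will be extracted from the linear spectral theory for the extended autonomous system (\ref{desing-para-nonaut}) via the machinery of Proposition \ref{prop-req-NHIM-blowup}, after the boundary of $M_I$ is removed using the embedding supplied by Proposition \ref{prop-embed-MI}. The existence of $t_{\max} < \infty$ together with the shadowing relation (\ref{asymptotic-main-nonaut}) is already provided by Corollary \ref{cor-nonaut-blowup-1}, so the task is purely quantitative.

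First, I would apply Proposition \ref{prop-embed-MI} to the inflowing invariant tube $M_I \subset \mathcal{E}$. This produces a compact, boundaryless, normally attracting invariant manifold $\hat N$ sitting in an ambient Riemannian manifold $\hat Q$, together with a vector field $\hat g$ that coincides with the desingularized field $g$ on all of $W^s(M_I; \varphi_g) \subset W^s(\hat N; \varphi_{\hat g})$. Because $\hat N$ is boundaryless, I am now in a setting where the smooth linearization results behind Proposition \ref{prop-req-NHIM-blowup} apply. By construction, the hull $V_{{\bf x}_\gamma}$ of the shadowed trajectory $\gamma$ lies in $\operatorname{int}(M_I) \subset \hat N$, so neither the modification from $g$ to $\tilde g$ (supported in $I'\setminus I$) nor the further embedding into $\hat g$ affects the dynamics and the linearization along $V_{{\bf x}_\gamma}$.

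Second, I would invoke Proposition \ref{prop-req-NHIM-blowup} for $\hat g$ near $\hat N$. The hypothesis that the dichotomy spectrum of the stable subbundle over $V_{{\bf x}_\gamma}$ is discrete, combined with the Sternberg-Sell condition of order $1$, yields a $C^1$-conjugacy between $\hat g$ and its linearization in a neighborhood of $V_{{\bf x}_\gamma}$, as well as the Whitney sum decomposition (\ref{spectral-N}) of the stable bundle into spectral subbundles. The argument in the proof of Proposition \ref{prop-req-NHIM-blowup}, which expands $1 - p({\bf x}(\tau))^{2c}$ componentwise as in (\ref{expansion-1mp}) and transfers the exponential behavior through the conjugacy, goes through verbatim here because only the components $x_i$ with $i \in I_\alpha$ enter $p$, and on the horizon the $t$-coordinate is frozen (so the exponent $\alpha_0 = 0$ in $\alpha = (0,\alpha_1,\ldots,\alpha_n)$ contributes nothing to $p$). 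The slowest active stable exponent $\lambda_s < 0$ among those excited by the initial deviation furnishes the positive constant $\lambda = -\lambda_s$ for which (\ref{decay-precise-nonaut}) holds. With this rate in hand, Theorem \ref{thm-blowup-nonaut-fund} immediately produces the asymptotic expressions for $p({\bf y}(t))$ and for each $y_i(t)$ via the reconstruction $y_i = \kappa^{\alpha_i} x_i$ and the time-scale relation (\ref{time-desing-nonaut}).

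The principal obstacle is conceptual rather than computational: transferring the spectral conjugacy of Proposition \ref{prop-req-NHIM-blowup}, developed for boundaryless NHIMs of an autonomous vector field, to the present tube $M_I$ with $\partial M_I \neq \emptyset$. Proposition \ref{prop-embed-MI} is the correct surgery, but one must verify that (i) the bump modification yielding $\tilde g$ and the further embedding into $\hat g$ preserve the linearized flow along $V_{{\bf x}_\gamma}$, which sits in the region where $\rho_{I,I'} \equiv 0$; and (ii) the stable foliation of $W^s(\hat N; \varphi_{\hat g})$ restricts to the stable foliation of $W^s(M_I; \varphi_g)$, so that the original trajectory ${\bf x}(\tau) = T_{{\rm para};\alpha}(t(\tau),{\bf y}(\tau))$ does lie on the fiber over ${\bf x}_\gamma(\tau)$ and inherits the spectral decomposition. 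Both conditions are built into the conclusion of Proposition \ref{prop-embed-MI}, so once that result is invoked the remaining work is really the assembly of ingredients already present in the excerpt, and no new analytic estimate is required.
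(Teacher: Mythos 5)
Your proposal is correct and follows essentially the same route as the paper: it invokes Proposition \ref{prop-embed-MI} to replace the tube $M_I$ (with boundary) by the compact boundaryless NAIM $\hat N$ on which $\hat g$ agrees with $g$ along $W^s(M_I;\varphi_g)$, then applies Proposition \ref{prop-req-NHIM-blowup} with $M$ replaced by $\hat N$ to obtain the precise decay (\ref{decay-precise-nonaut}), and finally reads off the asymptotics of $p({\bf y}(t))$ and $y_i(t)$ from the general nonautonomous description. Your additional checks (that the bump modification and the doubling surgery do not alter the dynamics or linearization along $V_{{\bf x}_\gamma}$, and that the stable foliations are identified) are exactly the points the paper delegates to Proposition \ref{prop-embed-MI}, so no gap remains.
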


\begin{proof}
It follows from Proposition \ref{prop-embed-MI} that $M_I$ as the submanifold of $Q = W^s(M_{I'}; \varphi_g)$ admits the stable foliation of $W_{\rm loc}^s(M_I; \varphi_{\hat g}) = W_{\rm loc}^s(M_I; \varphi_g)$.
The statement then follows from the distribution of $M_I\subset \mathcal{E}$, in particular of $\pi_t M_I$ in $\mathbb{R}$.
Results involving asymptotic behavior of $p({\bf y}(t))$ and $y_i(t)$ are the direct consequence of Proposition \ref{prop-req-NHIM-blowup} replacing $M$ with $\hat N$ stated in Proposition \ref{prop-embed-MI}.
\end{proof}

\begin{rem}[Beyond normal hyperbolicity]
Here we have restricted our attention to NHIMs because our geometric description of blow-ups relies on embeddings of compact invariant manifolds with boundary to compact \lq\lq boundaryless" manifolds with the same normal hyperbolicity restricted to the original manifolds.
This technique is based on \cite{EKR2018} and its validity beyond normal hyperbolicity (like situations in \cite{LP2021}) remains nontrivial.
If the similar embedding technique is constructed to general invariant manifolds (admitting asymptotic phase), the same conclusion for blow-up rates will be provided.
\end{rem}

\subsection{Proof of Corollary \ref{cor-nonaut-blowup}}
\label{section-app_proof-nonaut}

The proof is a direct consequence of combinations of Proposition \ref{prop-discrete-spec} and Corollary \ref{cor-nonaut-blowup-2}.
\par
First, thanks to Proposition \ref{prop-embed-MI}, the manifold $M_I$ is embedded into a compact, boundaryless NAIMs in a manifold $\hat Q$ (mentioned in Proposition \ref{prop-EKR-boundaryless} for $Q = W^s(M_{I'}; \varphi_{\tilde g})$), and invariant foliation of $W^s_{\rm loc}(M_I; g)$ is admitted.
In particular, $g$ admits a local topological conjugacy on $M_I$ to $Dg|_{\hat Q}$, where we have used the identification of $g$ and the modified vector field $\hat g$ on $W^s_{\rm loc}(M_I; g)$.
Next, because $M_I$ is assumed to be a collection of equilibria, the dichotomy spectrum of $Dg$ on each hull $V_{{\bf x}_\gamma}$ of trajectories on $M_I$, which is exactly a point ${\bf x}(t)\in M_I$ itself for each $t\in I$, are discrete by Proposition \ref{prop-discrete-spec}.
Therefore, Corollary \ref{cor-nonaut-blowup-2} can be applied to obtaining the conclusion.

\subsection{Reduction to boundaryless inflowing NAIMs}
\label{section-app_bo_bd}

Finally we review a technique to modify inflowing invariant manifolds with normally hyperbolic structure with non-trivial boundary to {\em boundaryless} ones used in the argument of Proposition \ref{prop-embed-MI}.
This was provided in \cite{EKR2018}, where the reduction of inflowing {\em normally attracting} invariant manifolds (NAIMs) with boundaries to boundaryless ones was discussed.
The main aim of the discussion was to apply global linearization theorem for boundaryless NHIMs stated in preceding works (e.g. \cite{HPS1977, PS1970}) to inflowing NAIMs.
See e.g., \cite{H1994} for fundamental concepts in differential geometry and differential topology.
Let $Q$ be a $C^{r\geq 1}$ Riemannian manifold.

\begin{prop}[Reduction to boundaryless NAIM, \cite{EKR2018}, Proposition B.1]
\label{prop-EKR-boundaryless}
Let $M,N\subset Q$ be compact inflowing $r$-NAIMs, with $M\subset {\rm int}_Q N$, for the $C^{r\geq 1}$ flow $\varphi$ generated by the $C^{r\geq 1}$ vector field $f$ on $Q$.
Let $U_0$ be an arbitrarily small tubular neighborhood of $\partial N$, having smooth boundary $\partial U_0$ and disjoint from $W^s_{\rm loc}(M;\varphi)$.
Define $\hat Q$ to be the double of $Q\setminus U_0$.
\par
Then there is a $C^\infty$ differential structure on $\hat Q$ and a $C^r$ vector field $\hat f: \hat Q\to T\hat Q$ satisfying the following properties:
\begin{enumerate}
\item $\hat f$ is equal to $f$ on each copy of $Q\setminus U_0$ except on an arbitrarily small neighborhood of $\partial U_0$.
\item There is a compact and boundaryless $r$-NAIM $\hat N$ for $\hat f$, with $\hat N$ equal to $N$ on each copy of $Q\setminus U_0$, except on an arbitrarily small neighborhood of $\partial U_0$.
\item The global stable foliation of $M$ for $f$ does not intersect $U_0$, and it coincides with the global stable foliation of $M$ for $\hat f$, where $M$ and $W^s(M)$ are identified via inclusion with subset of a copy of $Q\setminus U_0$ in $\hat Q$.
\end{enumerate}
Furthermore, let $\hat \varphi$ be the $C^r$-flow generated by $\hat f$, and let $\hat E^s$ be the $D\hat \varphi^t|_{\hat N}$-invariant stable vector bundle for the NAIM $\hat N$.
If, additionally, there are constants $C > 0$ and $\alpha < 0$ such that, for all $m\in M$, $t\geq 0$ and $0\leq i \leq k$, the $k$-center bunching condition\footnote{
This condition implies that $W^s(M;\varphi)$ and $E^s$ as fiber or vector bundles are $C^k$, according to \cite{F1977}.
}
\begin{equation}
\label{k-bunch}
\|D\varphi^t |_{T_mM}\|^i \|D\varphi^t |_{E^s_m}\| \leq Ce^{\alpha t} m\left(  D\varphi^t |_{T_mM}\right)
\end{equation}
is satisfied for the original system on $Q$, then (\ref{k-bunch}) is also satisfied replacing $M$, $E^s$ and $\varphi^t$ with $\hat N$, $\hat E^s$ and $\hat \varphi^t$, respectively, and $\alpha$ with some different constant $\hat \alpha < 0$.
\par
Similarly, if additionally there are constants $0 < \delta < -\alpha < -\beta$ and $C\geq 1$ such that, for all $t\geq 0$,
\begin{equation}
\label{EKR-dic}
\begin{cases}
C^{-1}e^{-\delta t} \leq m\left( D\varphi^t |_{TM} \right) \leq \| D\varphi^t |_{TM} \| \leq Ce^{\delta t}, & \\
C^{-1}e^{-\delta t} \leq m\left( \left( D\varphi^t |_{TM} \right)^{-1} \right) \leq \| \left( D\varphi^t |_{TM} \right)^{-1} \| \leq Ce^{\delta t}, & \\
C^{-1}e^{\beta t} \leq m\left( D\varphi^t |_{E^s} \right) \leq \| D\varphi^t |_{E^s} \| \leq Ce^{\alpha t} &
\end{cases}
\end{equation}
uniformly on $TM$ and $E^s$, then we can choose $\hat f$ appropriately such that the same conclusion holds true for $\hat \varphi^t$, $T\hat N$ and $\hat E^s$ with modified constants $0 < \hat \delta < -\hat \alpha < -\hat \beta$ arbitrarily close to $\delta, \alpha, \beta$, respectively.
\end{prop}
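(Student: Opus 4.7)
The plan is to construct $\hat Q$ by doubling $Q\setminus U_0$ across $\partial U_0$, produce $\hat f$ by smoothly interpolating $f$ with its reflection in a thin collar of $\partial U_0$, and verify that the resulting flow preserves all normal-hyperbolic and rate conditions away from the collar. First I would choose a smooth tubular neighborhood $C\cong \partial U_0\times(-\epsilon,\epsilon)$ inside $Q$ near $\partial U_0$, take two disjoint copies $Q_\pm$ of $Q\setminus U_0$, and glue them along $\partial U_0$ so that the half-collars inside $Q_\pm$ become the two halves of a common collar $\partial U_0\times(-\epsilon,\epsilon)$. Because $U_0$ is disjoint from $W^s_{\rm loc}(M;\varphi)$ by hypothesis, the resulting compact manifold $\hat Q$ carries a natural $C^\infty$ structure and contains a canonical copy of $M$ in each $Q_\pm$.

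Next I would produce $\hat N$ and $\hat f$ compatible with this doubling. Since $N$ is inflowing invariant, $\partial N$ lies in the interior of $U_0$, so $N\setminus U_0$ meets $\partial U_0$ transversally along a compact submanifold $\Gamma\subset \partial U_0$. Define $\hat N$ as the union of the two copies of $N\setminus U_0$ identified along $\Gamma$; by construction $\hat N$ is compact and boundaryless inside $\hat Q$. For the vector field, let $\rho$ be a cut-off depending only on the transverse collar coordinate with $\rho\equiv 1$ outside a slightly larger collar and $\rho\equiv 0$ in a thinner one, and set $\hat f = \rho f + (1-\rho)R_\ast f$ inside the collar, with $R$ the reflection across $\partial U_0$, extending to $f$ outside. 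The resulting $\hat f$ is $C^r$, agrees with $f$ outside an arbitrarily thin neighborhood of $\partial U_0$, and can be chosen tangent to $\hat N$ along the collar, so $\hat N$ is invariant under the flow $\hat \varphi$ generated by $\hat f$.

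Properties (1)--(3) then follow quickly. Property (1) is immediate from the support of the modification. For (2), outside the collar $\hat N$ inherits the NAIM estimates (\ref{exp-NH}) from $N$; inside the collar the modification is $C^r$-small with compact support, so the uniformity lemma for NHIMs gives persistence of the exponential estimates with constants arbitrarily close to those for $N$. Property (3) follows because $W^s_{\rm loc}(M;\varphi)$ is disjoint from both $U_0$ and the collar, so on this locus $\hat f = f$ and the stable foliation is unchanged. For (\ref{k-bunch}) and (\ref{EKR-dic}), the propagators $D\hat \varphi^t$ restricted to $T\hat N$ and $\hat E^s$ coincide with $D\varphi^t$ on each orbit segment outside the collar and are $C^0$-close to it inside; the asserted constants $\hat\alpha,\hat\beta,\hat\delta$ arbitrarily close to $\alpha,\beta,\delta$ then come from choosing the collar width and cut-off steepness small enough, since all the rate inequalities are strict and the relevant functionals are upper/lower semicontinuous in the $C^1$-norm of $f$.

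The main technical obstacle is $C^r$-regularity of $\hat N$ across $\Gamma$: naive reflection only produces $C^0$-matching unless $N$ meets $\partial U_0$ at a right angle with respect to the collar coordinates. This is circumvented by first choosing $\partial U_0$ transverse to $N$ and then adapting the collar so that $N\cap C$ is a product of $\Gamma$ with a segment of the transverse coordinate; the doubling then produces a genuine $C^r$ submanifold $\hat N$. Coupling this geometric adjustment with a simultaneous choice of cut-off making $\hat f$ tangent to $\hat N$, and verifying that the NAIM rates and the strict chain of inequalities in (\ref{EKR-dic}) persist uniformly on the compact set $\hat N$, is the only point where delicate control is required.
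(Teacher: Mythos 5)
First, note that the paper does not prove this statement at all: it is quoted verbatim as Proposition B.1 of \cite{EKR2018} and used as an imported black box, so there is no internal proof to compare against. Your overall strategy (double $Q\setminus U_0$ across $\partial U_0$, modify $f$ only in a thin collar, check that the normal attraction and the rate estimates survive) is indeed the strategy of the cited source, but as written your sketch has concrete gaps.

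The first gap is the gluing of the vector field. With your cutoff convention ($\rho\equiv 0$ in the inner collar) you get $\hat f=R_\ast f$ at the mirror hypersurface, whose component transverse to $\partial U_0$ is nonzero there (inflowing invariance of $N$ forces $f$ to be strictly transverse to $\partial U_0$ along $N$), and the two copies then induce \emph{different} vectors at identified points: in collar coordinates $(y,s)$ the doubled field is continuous at $s=0$ only if the transverse component vanishes there, and it is $C^r$ only if the tangential part is even and the transverse part odd in $s$ to the appropriate order. A convex combination of $f$ and $R_\ast f$ does not deliver this; one has to kill the transverse component and make the field $s$-independent (or reflection-symmetric to order $r$) in the collar, chosen compatibly with $N$ so that the modified field stays tangent to $N$ near $\Gamma$. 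The second gap is your persistence argument: the required modification is \emph{not} $C^r$-small, since it removes an order-one transverse component inside a collar whose width you want to shrink (so derivatives of the cutoff grow like the inverse width). Hence neither the uniformity lemma for small perturbations nor semicontinuity of the rate functionals in the $C^1$-norm applies as stated, and the preservation of normal attraction of $\hat N$ near the gluing locus (where orbits may stay forever, e.g.\ on the invariant mirror slice, so no finite-time reduction to the unmodified region is available), as well as of (\ref{k-bunch}) and (\ref{EKR-dic}) with constants $\hat\alpha,\hat\beta,\hat\delta$ close to the originals, needs a genuine argument exploiting that the modification only alters directions tangent to $\hat N$ in an adapted tubular neighborhood. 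Finally, item (3) concerns the \emph{global} stable foliation, whereas the hypothesis only keeps $U_0$ away from $W^s_{\rm loc}(M;\varphi)$; you still need to argue, using inflowing invariance and $M\subset{\rm int}_Q N$, that the global stable manifold never meets $U_0$ or the modified collar, rather than asserting it.
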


The essence of this result is that the differential structure of manifolds and vector fields are modified only in a neighborhood of $\partial U_0$, although the ambient manifold $Q$ is implicitly assumed to be boundaryless.
Even when $Q$ has a nontrivial boundary $\partial Q$, the above technique can be applied as long as $(Q, \partial Q)$ is a Riemannian manifold and $\partial Q$ is invariant for the vector field $f: Q\to TQ$. 
In this result, {\em normally attracting}, or {\em normally hyperbolic} properties of $N$ and $M$ is assumed. 
In this situation, the {\em uniformity lemma}\footnote{
If we try to provide the similar result to invariant manifolds which are not normally hyperbolic, the corresponding estimate will be necessary.
} can be applied (e.g., \cite{F1971, F1974_rate, W2013}) to making uniform estimates in (\ref{EKR-dic}).
\par
\bigskip
In Appendix \ref{section-boundaryless-mod}, this proposition is used to provide linearization of dynamics at infinity for nonautonomous systems.
More precisely, this technique is used to characterize spectral properties for invariant manifolds in nonautonomous systems by means of {\em compact, boundaryless} invariant manifolds, where local topological {\color{black}(and smooth)} conjugacy can be applied.
Although our interests are {\em NHIMs}, all arguments involving NAIMs can be applied by restricting our attention to the (global) stable manifold: $Q = W^s(M;\varphi)$ with a compact, connected (inflowing) invariant manifold $M$ with a normally hyperbolic structure, then $M$ is normally {\em attracting} for the flow restricted to $Q$, and hence the theory of {\em NAIMs} can be directly applied\footnote{
This is possible because the stable manifold $W^s(M)$ of an invariant manifold $M$ for a given flow is also invariant.
}.


\end{document}